\newtheoremstyle{yuri}
{} % space above
{} % space below
{\slshape} % body font
{} % indent amount
{\bfseries} % Theorem head font
{.} % ponctuation
{.5em} % space after theorem head
{} % Theorem head spec
\theoremstyle{yuri}
\newtheorem{proposition}{Proposition}[section]
\newtheorem{lemma}[proposition]{Lemma}
\newtheorem{corollary}[proposition]{Corollary}
\newtheorem{conjecture}[proposition]{Conjecture}
\newtheorem{theorem}[proposition]{Theorem}
\theoremstyle{definition}
\newtheorem{remark}[proposition]{Remark}
\newtheorem{definition}[proposition]{Definition}
\newcommand{\eps}{\varepsilon}
\newcommand{\C}{{\mathbb C}}
\renewcommand\P{{\mathbb P}}
\newcommand\Q{{\mathbb Q}}
\newcommand\R{{\mathbb R}}
\newcommand\Z{{\mathbb Z}}
\newcommand\PP{{\mathbb P}}
\newcommand{\CC}{{\mathcal{C}}}
\newcommand{\DD}{{\mathcal{D}}}
\newcommand{\LL}{{\mathcal{L}}}
\newcommand{\norm}{{\mathcal{N}}}
\newcommand{\OO}{{\mathcal{O}}}
\newcommand{\UU}{{\mathcal{U}}}
\newcommand{\tilP}{{\widetilde P}}
\newcommand{\tilCC}{{\widetilde \CC}}
\newcommand{\tilt}{{\tilde t}}
\newcommand{\et}{{\mathrm{et}}}
\newcommand{\fl}{{\mathrm{fl}}}
\newcommand{\Gal}{{\mathrm{Gal}}}
\newcommand{\Height}{{\mathrm{H}}}
\newcommand{\Heightp}{{\mathrm{H_p}}}
\newcommand{\genus}{{\mathbf g}}
\newcommand{\ualpha}{{\underline\alpha}}
\DeclareMathOperator{\Disc}{Disc}
\DeclareMathOperator{\Clg}{Cl}
\DeclareMathOperator{\rank}{rk}
\DeclareMathOperator{\Spec}{Spec}
\DeclareMathOperator{\innr}{innr}
\DeclareMathOperator{\Hom}{Hom}
\DeclareMathOperator{\Vol}{Vol}
\DeclareMathOperator{\tors}{tors}
\newcommand{\G}{\mathcal{G}}
\renewcommand*\l@section[2]{%
  \ifnum \c@tocdepth >\z@
    \addpenalty\@secpenalty
    \addvspace{0.2em \@plus\p@}%
    \setlength\@tempdima{1.5em}%
    \begingroup
      \parindent \z@ \rightskip \@pnumwidth
      \parfillskip -\@pnumwidth
      \leavevmode \bfseries
      \advance\leftskip\@tempdima
      \hskip -\leftskip
      #1\nobreak\hfil \nobreak\hb@xt@\@pnumwidth{\hss #2}\par
    \endgroup
  \fi}
\begin{document}

\hbadness 1500

\hfuzz 4pt

\title{Chevalley-Weil Theorem and Subgroups of Class Groups}

\author{Yuri Bilu \and Jean Gillibert}

\date{September 2017}

\maketitle

\begin{abstract}
We prove, under some mild hypothesis, that an {\'e}tale cover of curves defined over a number field has infinitely many specializations into an everywhere unramified extension of number fields. This constitutes an ``absolute'' version of the Chevalley-Weil theorem.
Using this result, we are able to generalise the techniques of Mestre, Levin and the second author for constructing and counting number fields with large class group.
\end{abstract}

%%%%%%%%%%%%%%%%%%%%%%%%%%%%%%%%%%%%%%%%%%%%%

%%%%%%%%%%%%%%%%%%%%%%%%%%%%%%%%%%%%%%%%%%%%%

\begin{footnotesize}
\tableofcontents
\end{footnotesize}

%%%%%%%%%%%%%%%%%%%%%%%%%%%%%%%%%%%%%%%%%%%%%

\section{Introduction}

\textsl{In this article, ``curve'' always stands for a ``smooth geometrically irreducible projective curve''.}

\bigskip

Let~$H$ be a finite abelian group,~$K$ a number field and ${d>1}$ an integer. The following conjecture is widely believed to be true.

\begin{conjecture}
\label{cong}
The field~$K$ has infinitely many extensions~$L$ of degree ${[L:K]=d}$ such that~$H$ is a subgroup of the class group $\Clg(L)$. 
\end{conjecture}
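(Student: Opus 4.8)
The plan is to follow the strategy pioneered by Mestre and developed by Levin and the second author: realise $H$ as a quotient of $\Clg(L)$ — hence, since $H$ is finite abelian, as a subgroup — by exhibiting an everywhere unramified extension $M/L$ with $\Gal(M/L)\cong H$, obtained by \emph{specialising a fixed étale cover of curves}, and to deduce the everywhere‑unramified property from the absolute Chevalley--Weil theorem. Thus the conjecture should follow, at least under mild hypotheses, once the geometric input is arranged.

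\emph{Geometric set‑up and specialisation.} I would first fix a curve $C$ over $K$ carrying two structures: a finite morphism $\phi\colon C\to\PP^1$ of degree exactly $d$ with nonempty branch locus, and a geometrically connected finite étale cover $\psi\colon Y\to C$ which is Galois with group $H$, the $H$‑action being defined over $K$ (so that $\psi$ is an $H$‑torsor over $C$ in the étale topology over $K$). Constructing such a $\psi$ for an arbitrary finite abelian $H$ is the combinatorial core of the argument; one assembles it from Kummer‑ and Artin--Schreier‑type coverings over a suitable auxiliary cover of $\PP^1$, keeping $\deg\phi=d$ and the branch locus of $\phi$ under control. For $t\in\PP^1(K)$ whose fibre $\phi^{-1}(t)$ is a single closed point $P_t$ with residue field $L_t:=K(P_t)$ of degree $d$ over $K$ — a set with thin complement, by Hilbert irreducibility applied to $\phi$ — the base change of $\psi$ to $\Spec L_t$ is a torsor under the constant group $H$, classified by a homomorphism $\rho_t\colon\Gal(\overline{L_t}/L_t)\to H$; a further Hilbert‑irreducibility argument shows $\rho_t$ is surjective outside another thin set, so its fixed field $M_t/L_t$ is a genuine $H$‑extension.

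\emph{From the absolute Chevalley--Weil theorem to class groups.} Since $\psi$ is étale, the absolute Chevalley--Weil theorem supplies infinitely many $t\in\PP^1(K)$, necessarily avoiding the two thin sets above, for which $M_t/L_t$ is unramified at every finite place of $L_t$. For each such $t$, class field theory turns the everywhere unramified abelian extension $M_t/L_t$ into a surjection $\Clg(L_t)\twoheadrightarrow\Gal(M_t/L_t)\cong H$; as $H$ is finite abelian this surjection splits, giving $H\hookrightarrow\Clg(L_t)$ with $[L_t:K]=d$. It then remains to check that $\{L_t\}$ contains infinitely many isomorphism classes, for which I would show $\Disc(L_t)\to\infty$ along the family: because $\phi$ has nonempty branch locus, a height/ramification estimate forces $L_t/K$ to ramify at more and more rational primes as $t$ approaches the branch locus $p$‑adically for varying $p$, and only finitely many number fields of degree $d$ have bounded discriminant.

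The main obstacles I expect are two. The first is the explicit construction in the geometric step: producing $\psi$ and $\phi$ that \emph{simultaneously} pin down $\Gal(Y/C)\cong H$, $\deg\phi=d$, and the nontriviality of the branch locus — this is where flexibility in $H$ and in $d$ is needed, and presumably where the ``mild hypothesis'' of the theorem enters. The second is the unconditional lower bound $\Disc(L_t)\to\infty$ ensuring infinitely many distinct fields; controlling ramification in a one‑parameter family of degree‑$d$ points, rather than just along $K$‑points of a single curve, is the technically delicate part. The conceptual novelty that makes the scheme go through is precisely the upgrade from ``unramified outside a fixed finite set $S$'' (classical Chevalley--Weil) to ``everywhere unramified for infinitely many specialisations'' (the absolute version).
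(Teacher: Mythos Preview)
The statement you are attempting to prove is explicitly labeled a \emph{conjecture} in the paper, and the paper does not prove it; indeed it says ``the general case remains widely open.'' So there is no ``paper's proof'' to compare your proposal to, and your outline, if it worked, would settle an open problem.

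The genuine gap in your argument is the geometric construction you defer with ``one assembles it from Kummer- and Artin--Schreier-type coverings.'' First, Artin--Schreier covers live in positive characteristic and are irrelevant here. More importantly, producing a geometrically connected $H$-torsor $\psi\colon Y\to C$ \emph{defined over $K$} is tightly constrained: \'etale abelian covers of $C$ with group $(\Z/n)^r$ defined over $K$ correspond, via the sequence~\eqref{spectral_tors} in the paper, to $K$-rational homomorphisms $\mu_n^r\hookrightarrow J(C)$ (or dually, $(\Z/n)^r\hookrightarrow J(C)(K)$). Thus to carry out your plan for $H=(\Z/n)^r$ you must exhibit a curve $C$ over $K$ that \emph{simultaneously} (i) admits a degree-$d$ map to $\PP^1$ and (ii) has $\rank_{\mu_n}J(C)\ge r$ (or $\rank_n J(C)(K)_{\tors}\ge r$). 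There is no known construction doing this for arbitrary $n$, $r$, $d$, $K$; this is precisely why the conjecture is open, and it is exactly what the paper's main theorems (Theorems~\ref{thold} and~\ref{thnew}) take as a \emph{hypothesis} rather than prove. Your proposal also silently assumes the extra input required by the absolute Chevalley--Weil theorem (Theorem~\ref{thany}), namely a $K$-point $A\in C(K)$ with $K(\psi^{-1}(A))=K$, which is again not free.

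In short, your strategy is the same as the paper's, but the paper is honest that the geometric input is the bottleneck and packages the output as conditional theorems (Theorems~\ref{thold}, \ref{thnew}, \ref{main_application}) whose hypotheses encode exactly the construction you have not supplied. Two smaller points: you need ``unramified at all places including archimedean'' (not just finite places) to invoke $\Clg(L)$ rather than the narrow class group, which the paper's Theorem~\ref{thany} does deliver; and the infinitude of isomorphism classes of $L_t$ is handled in the paper not by a discriminant-growth argument but by the counting result of Dvornicich--Zannier (Section~\ref{s_DZ}).
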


This conjecture is known to be true in many special cases, for instance,  when ${K=\Q}$ and~$H$ is a cyclic group or a product of two cyclic groups, see~\cite{AI84}. However, the general case remains widely  open.  We refer to \cite{BL05,GL12,Le07} for the history of the problem and further references. 

It is  clear that one may restrict to the case when ${H=(\Z/n)^r}$, the product of~$r$ cyclic groups of order~$n$, where~$n$ and~$r$ are positive integers. Denote by $\rank_n M$ the $n$-rank of a finite abelian group~$M$; that is,   the maximal integer~$r$ such that  ${(\Z/n)^r\le M}$. Then Conjecture~\ref{cong} is equivalent to the following.

\begin{conjecture}
\label{conj2}
Let ${n>1}$ be an integer. Then $\rank_n\Clg(L)$ is unbounded when~$L$ runs through the extensions of~$K$ of degree ${[L:K]=d}$. 
\end{conjecture}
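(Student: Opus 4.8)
The plan is to reduce Conjecture~\ref{conj2} to a purely geometric existence statement and then feed its solution into the absolute Chevalley--Weil theorem. Fix $n>1$ and $d>1$. For each integer $r\ge 1$ the input I want is the following data over $K$: a curve $C_r$, a finite morphism $\varphi_r\colon C_r\to\P^1_K$ of degree $d$, and a connected finite \'etale cover $\pi_r\colon\tilde C_r\to C_r$, defined over $K$ and Galois with $\Gal(\tilde C_r/C_r)\cong(\Z/n)^r$. (Concretely, one wants a $d$-gonal curve over $K$ whose Jacobian carries enough $K$-rational $n$-torsion to cut out such a cover, together with its Galois action.)

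Granting this data $(C_r,\pi_r,\varphi_r)$, the deduction runs as follows. The composite $\tilde C_r\to C_r\to\P^1_K$ (the second arrow being $\varphi_r$) is a finite cover of $\P^1_K$, so Hilbert's irreducibility theorem provides a thin set $Z_r\subset\P^1(K)$ such that for every $t\in\P^1(K)\setminus Z_r$ the fiber $(\pi_r\varphi_r)^{-1}(t)$ is a single closed point, say $\Spec M_t$ with $M_t$ a number field; and $\P^1(K)\setminus Z_r$ is infinite. Put $L_t:=K(\varphi_r^{-1}(t))$. Then $L_t\subseteq M_t$ and $[L_t:K]=\deg\varphi_r=d$; moreover, since $\pi_r$ is \'etale and Galois over $K$ with group $(\Z/n)^r$ and its fiber over the (necessarily connected) point $\varphi_r^{-1}(t)$ is connected, $M_t/L_t$ is Galois with $\Gal(M_t/L_t)\cong(\Z/n)^r$. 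Now apply the absolute Chevalley--Weil theorem to $\pi_r$: after discarding a further thin (respectively finite) set of $t$ it guarantees that $M_t/L_t$ is everywhere unramified, while infinitely many admissible $t$ remain. By class field theory the Artin map yields a surjection $\Clg(L_t)\twoheadrightarrow\Gal(M_t/L_t)\cong(\Z/n)^r$, and a surjection of finite abelian groups cannot decrease the $n$-rank, so $\rank_n\Clg(L_t)\ge r$. Since $r$ was arbitrary and every $L_t$ satisfies $[L_t:K]=d$, this yields the unboundedness claimed in Conjecture~\ref{conj2}.

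The whole weight of the argument therefore falls on constructing the geometric data for arbitrarily large $r$ \emph{over $K$ itself}, and this is where the difficulty — and the reason the conjecture remains open — resides. For $d=2$ one may work with hyperelliptic models $y^2=f(x)$ and choose $f$ so that the Jacobian acquires the required rational $n$-torsion; for $n$ prime and small $d$ the constructions of Mestre, Levin and the second author already produce such curves, and the methods of the present paper should enlarge the range of cases that can be treated; but in full generality — arbitrary composite $n$ and arbitrary $d$ — no such construction is known. A secondary, more technical obstacle is a descent issue: a construction that only realises $\pi_r$ over a finite extension $K'/K$ proves the statement with $d$ replaced by $d[K':K]$, so to land on the prescribed $d$ over $K$ one must make the $(\Z/n)^r$-structure $K$-rational from the outset, or arrange the degree-$d$ pencil on $C_r$ so as to reabsorb the factor $[K':K]$ — a point that has to be handled case by case.
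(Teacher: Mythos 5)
This statement is an open conjecture in the paper, not a theorem: the authors explicitly say that ``the general case remains widely open'' and that for $n,d$ coprime no case is known. So there is nothing in the paper to compare against, and your writeup, to its credit, does not pretend to be a proof. What you have done is reduce Conjecture~\ref{conj2} to the existence, for every $r$, of a $d$-gonal curve $C_r$ over $K$ together with a connected $(\Z/n)^r$-cover trivialised over a $K$-rational fibre, and you correctly identify that this geometric input is precisely what is missing. That reduction is in substance the paper's own mechanism: it is the special case ``take $\G=\Z/n\Z$'' of Theorem~\ref{int_torsors} combined with Lemma~\ref{finite_covers}, which is what drives Theorem~\ref{thnew}; the rank that controls how large $r$ can be taken is $\rank_{\mu_n}J(\CC)$, and the reason the conjecture is open is exactly that no one knows how to make this rank arbitrarily large while keeping a degree-$d$ pencil over $K$. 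So your diagnosis of where the difficulty sits is accurate.

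Two technical points you should tighten, since they matter when you try to make the scheme precise. First, to invoke the absolute Chevalley--Weil theorem you need not just a $(\Z/n)^r$-cover $\pi_r$ but also a $K$-rational point $A\in C_r(K)$ with $K(\pi_r^{-1}(A))=K$, \emph{and} you need the degree-$d$ map $\varphi_r$ to be totally ramified over the image of $A$ (this is a hypothesis of Theorems~\ref{thold} and~\ref{thnew}; it is what guarantees $A$ is the unique zero of the rational function $t$, so that the $v$-adic smallness of $t(P)$ forces $P$ close to $A$). You should list these as part of the required geometric data. Second, you describe the constraints coming from Theorem~\ref{thany} as ``discarding a further thin (respectively finite) set of $t$''; this is not right. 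The condition $|t(P)|_v<\eps$ for $v\in S\cup M_K^\infty$ is an open $v$-adic condition, not the complement of a thin set, and reconciling it with Hilbert irreducibility so that infinitely many admissible $P$ survive is exactly the content of Theorem~\ref{thdvorzann} (the Dvornicich--Zannier style counting result). With those two corrections the reduction is clean, but it remains a reduction: the conjecture itself is unproved, as you say.
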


When $n=d$, and more generally when $n$ divides $d$, this conjecture follows easily from Class Field Theory. On the other hand, when $n$ and $d$ are coprime, there is not a single case where Conjecture~\ref{conj2} is known to hold. For example, given $n>1$, there exists infinitely many imaginary quadratic fields such that $\rank_n\Clg(L)$ is at least $2$. For $n\geq 7$ this is currently the best known result on $n$-ranks of quadratic fields.

All partial results towards this conjecture  were obtained by the same strategy: one considers a suitable curve~$\CC$ over~$K$ admitting a finite $K$-morphism ${\CC\to \P^1}$ of degree~$d$. Hilbert's Irreducibility Theorem implies abundance of ${P\in \CC(\bar K)}$ such that ${[K(P):K]=d}$. When~$\CC$ has many independent étale Galois covers with cyclic group of order $n$, ``most'' of the fields $K(P)$ have a class group with large $n$-rank.

In particular, systematizing and generalizing the previous work, most notably the results of~\cite{Le07}, the following theorem was proved in~\cite{GL12} in the case when ${K=\Q}$ and $\CC$ is a superelliptic curve defined by a ``nice equation'' (see Corollary~3.1 of \cite{GL12}).

\begin{theorem}
\label{thold}
Let~$\CC$ be a  curve over a number field~$K$, let $J(\CC)$ be the Jacobian of~$\CC$, and let ${n>1}$ be an integer. Assume that~$\CC$ admits a finite $K$-morphism ${\CC \to \P^1}$ of degree~$d$, totally ramified over some point belonging to $\P^1(K)$.
Then there exist infinitely many (isomorphism classes of) number fields~$L$ with ${[L:K]=d}$  such that
\begin{equation}
\label{eold}
\rank_n \Clg(L) \geq \rank_n J(\CC)(K)_{\tors} - \rank_{\Z} \mathcal{O}_{L}^\times + \rank_{\Z} \mathcal{O}_K^\times + \rank_n \Clg(K).
\end{equation}
\end{theorem}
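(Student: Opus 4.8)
\emph{Proof strategy.} The plan is, for each of infinitely many number fields $L$ with $[L:K]=d$, to produce an everywhere-unramified abelian extension $M/L$ of $n$-rank close to $\rank_n J(\CC)(K)_{\tors}$, by specializing one well-chosen étale abelian cover of $\CC$; Class Field Theory then turns $M/L$ into a subgroup of $\Clg(L)$, and a Kummer-theoretic rank count produces the precise inequality \eqref{eold}. Set $J=J(\CC)$ and $r=\rank_n J(K)_{\tors}$. Since any subgroup of $J(K)_{\tors}$ isomorphic to $(\Z/n)^r$ automatically lies in $J(K)[n]$, I fix $\eta_1,\dots,\eta_r\in J(K)[n]$ of order $n$ spanning a subgroup $\cong(\Z/n)^r$ (hence also $\cong(\Z/n)^r$ over $\bar K$). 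The totally ramified hypothesis gives a unique $O\in\CC(K)$ above $\infty\in\P^1(K)$, and this point will be used three times. First, it provides an Abel--Jacobi embedding $\iota_O\colon\CC\hookrightarrow J$, $P\mapsto[P-O]$, over $K$, and forces $\CC(K)\neq\emptyset$, so that $\operatorname{Pic}^0$ of $\CC$ over $K$ is $J(K)$. To each $\eta_i$ I attach an étale cyclic cover $\CC_i\to\CC$ over $K$ of degree $n$: writing $\eta_i=[D_i]$ with $nD_i=\operatorname{div}(f_i)$, the normalization of $\CC$ in $K(\CC)(f_i^{1/n})$ is étale over $\CC$ because $\operatorname{div}(f_i)$ is $n$-divisible, and the $\CC_i$ are independent because the $\eta_i$ are. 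Let $\psi\colon\XX\to\CC$ be their fibre product over $\CC$: a finite étale cover, geometrically connected and Galois with group $(\Z/n)^r$. (For transparency I assume $\mu_n\subseteq K$, so these are honest $\Z/n$-covers; in general one works with $\mu_n$-torsors throughout, which affects only a harmless twist in the final count.) Second, $\phi$ restricts to a finite morphism $\CC':=\CC\setminus\{O\}\to\mathbb{A}^1$ of degree $d$, and $\operatorname{Pic}(\CC')\cong J(K)$.

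Next I would apply Hilbert's Irreducibility Theorem to the composite cover $\XX\to\CC'\to\mathbb{A}^1$: outside a thin set $Z\subset\mathbb{A}^1(K)$, the fibre of $\CC'$ over $t_0$ is $\Spec L$ with $[L:K]=d$ — that is, $L=K(P)$ for $P$ a point of $\phi^{-1}(t_0)$ — and $M:=L[\XX_P]$ is a $(\Z/n)^r$-extension of $L$, the $\eta_i$ not collapsing under specialization. Letting $t_0$ vary yields infinitely many pairwise non-isomorphic $L$, by a standard discriminant-growth argument. It remains to make $M/L$ unramified everywhere and then to count.

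Here is where the main obstacle lies. By the Chevalley--Weil theorem applied to the étale cover $\psi$, there is a finite set $S$ of places of $K$, depending only on $\psi$, such that $M/L$ is unramified outside the places of $L$ above $S$. The crux is to upgrade this to genuine everywhere-unramifiedness; this is the third use of $O$. One imposes on $t_0$ finitely many additional local conditions at the places of $S$ — for instance that $t_0$ be highly divisible by the primes of $S$, so that at each such place $P$ reduces near $O$, where $\psi$ is étale with smooth reduction — together with an archimedean condition; these carve out a non-empty open set, remain compatible with Hilbert irreducibility, and kill the residual ramification at $S$ and at infinity. Then $M\subseteq H_L$, the Hilbert class field, so Class Field Theory gives $\Clg(L)\twoheadrightarrow\Gal(M/L)\cong(\Z/n)^r$. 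Making this upgrade work cleanly and uniformly in $P$ is exactly the ``absolute Chevalley--Weil theorem'' announced in the abstract; in the superelliptic case over $\Q$ it was carried out by hand in \cite{GL12}.

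Finally, to obtain the sharp inequality \eqref{eold} rather than the crude bound $\rank_n\Clg(L)\geq r$, one tracks the non-étale behaviour through a Kummer-theoretic Euler characteristic. On the geometric side, the ``integrally good'' $\mu_n$-covers of a suitable integral model of $\CC'$ fit into an exact sequence featuring $\operatorname{Pic}(\CC')[n]=J(K)[n]$, plus contributions of $\OO_K^\times$ and $\Clg(K)$ from the constant covers $K(c^{1/n})$. On the arithmetic side, the everywhere-unramified $\mu_n$-extensions of $L$ fit into
\[
1\longrightarrow\OO_L^\times/(\OO_L^\times)^n\longrightarrow U_L\longrightarrow\Clg(L)[n]\longrightarrow 1 .
\]
Specialization at $P$ maps the geometric package into $U_L$; comparing $\mathbb{F}_p$-ranks for $p\mid n$, and using (from Hilbert irreducibility) that the images of $\eta_1,\dots,\eta_r$ stay independent, one finds that the image has rank at least $r+\rank_n\Clg(K)+\rank_\Z\OO_K^\times$ (up to the harmless $\mu_n$-term), of which at most $\rank_\Z\OO_L^\times$ can be absorbed into $\OO_L^\times/(\OO_L^\times)^n$; the rest injects into $\Clg(L)[n]$. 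This sharpens the surjection above to $\Clg(L)\twoheadrightarrow(\Z/n)^m$ with $m$ at least the right-hand side of \eqref{eold}, and since a surjection onto $(\Z/n)^m$ forces $(\Z/n)^m\leq\Clg(L)$, the theorem follows.
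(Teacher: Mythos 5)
Your strategy mirrors the paper's line of attack for Theorem~\ref{thold}: manufacture $r=\rank_n J(\CC)(K)_{\tors}$ independent $\mu_n$-torsors over $\CC$ trivial over the totally ramified point, specialize at points $P$ that are $v$-adically close to that point for all $v$ in the Chevalley--Weil set $S$ and at archimedean places, invoke Hilbert irreducibility, and then pass from $H^1_{\fl}(\OO_L,\mu_n)$ to $\Clg(L)$ via the flat Kummer sequence. This is exactly the paper's machinery (Lemma~\ref{finite_covers}, Theorem~\ref{int_torsors}, Theorem~\ref{main_application}, plus the sequence~\eqref{kummer_rank}). However, three steps in your sketch are not yet airtight.

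First, the cover $\CC_i$ obtained from $K(\CC)(f_i^{1/n})$ must have \emph{trivial} $\mu_n$-torsor fiber over $O$; this is precisely the hypothesis ``$\psi^{-1}(A)$ trivial'' that makes the Absolute Chevalley--Weil (Lemma~\ref{lmain}) conclude that primes over $S$ split. In the Kummer picture one must normalize $f_i$ so that $f_i(O)=1$: since $\operatorname{div}(f_i)=nD_i$ with $D_i$ supported away from $O$, $f_i$ is determined only up to a constant, and nothing forces $f_i(O)\in(K^\times)^n$ a priori. The paper sidesteps this by pulling back the dual isogeny $\theta^t\colon B^t\to J(\CC)$ along the Abel--Jacobi map based at $A=O$, which gives the trivial fiber automatically.

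Second, your rank count needs more than linear disjointness from $K(\zeta_n)$. To ensure that $H^1_{\fl}(\OO_K,\mu_n)\to H^1_{\fl}(\OO_L,\mu_n)$ is injective \emph{and} that its image meets the subgroup generated by the specialized torsors only trivially, the paper imposes linear disjointness of $L$ from the compositum $F$ of all $K(\xi)$ with $\xi\in H^1_{\fl}(\OO_K,\mu_n)$ (finite by Hermite--Minkowski). This is an additional thin-set condition, compatible with Hilbert irreducibility, but it is not the same as the $K(\zeta_n)$ condition you record; the latter is used later, and only to convert $\rank_n(\OO_L^\times/n)-\rank_n(\OO_K^\times/n)$ into $\rank_\Z\OO_L^\times-\rank_\Z\OO_K^\times$.

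Third, ``comparing $\mathbb{F}_p$-ranks for $p\mid n$'' is not the right bookkeeping: the $n$-rank is not an $\mathbb{F}_p$-dimension, and the correct tool (Lemma~2.6 of~\cite{GL12}, \eqref{gl12eqn}) is that $\rank_n B\geq\rank_n A+\rank_n C$ for any exact sequence of $n$-torsion abelian groups, combined with the fact that the flat Kummer sequence
$0\to\OO_L^\times/n\to H^1_{\fl}(\OO_L,\mu_n)\to\Clg(L)[n]\to 0$
splits \cite{GG17}. Once you run the count with $n$-ranks rather than $\mathbb{F}_p$-ranks the inequality~\eqref{eold} falls out cleanly.
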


The  proof of this theorem  is based on Kummer theory and can be found in Subsection~\ref{ssproofs}. The last sentence can be made quantitative; see Theorem~\ref{thquant} below.

While  Theorem~\ref{thold} is quite general, its applicability in concrete cases is impaired by the presence of the negative term 
${- \rank_{\Z} \mathcal{O}_{L}^\times}$ on the right: the rank of the unit group of the field $L$ is quite large, especially if ${K\ne \Q}$.

The principal result of the present article is the following theorem, where this deficiency is avoided. Denote by $\rank_{\mu_n}J(\CC)$ the maximal integer~$r$ such that $J(\CC)$ has a $\Gal(\bar K/K)$-submodule isomorphic to $\mu_n^r$. In particular, if ${\mu_n\subset K}$  then  ${\rank_{\mu_n}J(\CC)=\rank_nJ(\CC)}$.

\begin{theorem}
\label{thnew}
In the set-up of Theorem~\ref{thold},
there exist infinitely many number fields~$L$ with ${[L:K]=d}$ such that
\begin{equation}
\label{enew}
\rank_n \Clg(L) \geq \rank_{\mu_n} J(\CC) + \rank_n \Clg(K).
\end{equation}
\end{theorem}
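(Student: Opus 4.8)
The strategy is to improve on Theorem~\ref{thold} by replacing the pointwise Kummer argument (which loses the unit rank of $L$) with an argument that controls the étale cover and its specializations \emph{integrally}, using the ``absolute'' Chevalley--Weil theorem advertised in the abstract. First, I would exhibit, from the $\Gal(\bar K/K)$-submodule $\mu_n^r \hookrightarrow J(\CC)$ with $r = \rank_{\mu_n}J(\CC)$, a connected unramified abelian cover $\tilCC \to \CC$ over $K$ with Galois group (a twist of) $(\Z/n)^r$: dually, a surjection from $J(\CC)[n]$ — hence from $\pi_1^{\et}(\CC)$ — onto the Cartier dual of $\mu_n^r$, which is $(\Z/n)^r$ as a bare group. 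Equivalently, pulling back the multiplication-by-$n$ isogeny on $J(\CC)$ along an Albanese embedding $\CC\hookrightarrow J(\CC)$ and restricting to the sub-$\mu_n^r$ gives such a cover. The key point is that the \emph{arithmetic} submodule structure $\mu_n^r$ (not merely the geometric rank) is exactly what makes the cover descend to $K$ with the right Galois action, so that after base change no enlargement of the ground field is forced.

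Next I would invoke the absolute Chevalley--Weil theorem (the paper's main technical result): the finite $K$-morphism $\CC\to\P^1$ of degree $d$, totally ramified over a $K$-rational point, together with Hilbert irreducibility, produces infinitely many $P\in\CC(\bar K)$ with $[K(P):K]=d$ and $K(P)$ varying; for each such $P$, specializing the étale cover $\tilCC\to\CC$ gives an extension $M_P/K(P)$ that is \emph{everywhere unramified} and whose Galois group surjects onto $(\Z/n)^r$ (the total ramification over a $K$-point is what prevents the relevant inertia from intervening and keeps $[K(P):K]=d$ rather than a proper divisor). Thus $M_P/K(P)$ is an unramified $(\Z/n)^r$-extension, so by Class Field Theory $(\Z/n)^r \hookrightarrow \Clg(K(P))$, giving $\rank_n\Clg(L)\ge r$ for $L = K(P)$.

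To get the extra summand $\rank_n\Clg(K)$, I would combine the cover $\tilCC\to\CC$ with the genus-theory/class-group contribution coming from $K$ itself: an element of $\rank_n\Clg(K)$ corresponds to an unramified $\Z/n$-extension of $K$, which base-changes to an unramified $\Z/n$-extension of $L$ that is linearly disjoint (for infinitely many $P$, by a further application of Hilbert irreducibility / a Chebotarev-type genericity argument) from the $(\Z/n)^r$-extension $M_P$. Concatenating gives an unramified $(\Z/n)^{r}\times(\Z/n)^{\rank_n\Clg(K)}$-extension of $L$, whence \eqref{enew}. Finally, infinitude of the isomorphism classes of such $L$ follows as in Theorem~\ref{thold}, since the heights of the specialization points $P$ are unbounded.

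The main obstacle is the second step: ensuring that the specialized cover is \emph{everywhere} unramified over $K(P)$ — including at the archimedean and bad-reduction places, and at the primes where $P$ meets the branch locus of auxiliary maps — and that it remains ``as large as possible'', i.e. its Galois group does not collapse below $(\Z/n)^r$ for all but finitely many $P$. This is precisely where the absolute Chevalley--Weil theorem does the work: the classical Chevalley--Weil statement already bounds ramification of $K(P_{\tilCC})/K(P)$ by a fixed set of primes, and the refinement needed here is to kill that fixed set by allowing $K(P)$ to vary, which in turn relies on the total ramification hypothesis over a rational point to normalize the situation. Controlling the linear disjointness in the third step is routine by comparison of ramification loci plus a Hilbert-irreducibility argument.
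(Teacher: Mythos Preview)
Your proposal is correct and follows essentially the same route as the paper. The paper packages the argument slightly differently --- it formulates a general specialization result (its Theorem~\ref{main_application}) for torsors under a finite flat group scheme $\G$, then applies it with $\G=\Z/n\Z$ (so $\G^D=\mu_n$) and reads off the class group via $H^1_{\text{$M_L^\infty$-split}}(\OO_L,\Z/n\Z)\simeq\Hom(\Clg(L),\Z/n\Z)$ --- but the underlying construction (pulling back the quotient isogeny $J(\CC)\to J(\CC)/\mu_n^r$ along the Albanese map based at the totally ramified point $A$, then invoking the absolute Chevalley--Weil theorem and Hilbert irreducibility) and the mechanism for absorbing $\rank_n\Clg(K)$ (linear disjointness of $K(P)$ from the fixed compositum of unramified $\Z/n$-extensions of $K$, forced via the thin-set version of Hilbert) are the same as yours.

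Two small points worth tightening: the parenthetical ``(a twist of)'' is misleading --- the Cartier dual of $\mu_n^r$ is the \emph{constant} group scheme $(\Z/n)^r$, so the torsor genuinely has constant structure group and no twist enters; this is precisely why the resulting unramified extension contributes directly to $\Hom(\Clg(L),\Z/n)$. Second, the reason the absolute Chevalley--Weil theorem applies is not quite that ``total ramification prevents inertia from intervening'' but rather that the Albanese embedding is chosen so that $A\mapsto 0$, making $\psi^{-1}(A)$ the \emph{trivial} torsor, i.e.\ $K(\psi^{-1}(A))=K$; this is the hypothesis the theorem needs, and it should be stated explicitly.
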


The proof is based on Class Field Theory (see Subsection~\ref{ssproofs}). It can be seen as a generalization of the construction of Mestre~\cite{Me92}. In Section~\ref{s_applications}, we revisit Mestre's result in the light of Theorem~\ref{thnew}.

In both Theorems~\ref{thold} and~\ref{thnew} the ``infinitely many'' can be made quantitative. 
Given a number field extension $L/K$, we define 
$$
\DD(L/K) =\bigl|\norm_{K/\Q}\Delta(L/K)\bigr|^{1/[K:\Q]},
$$
where $\Delta(L/K)$ is the  discriminant of~$L$ over~$K$. 

\begin{theorem}
\label{thquant}
Let ${t\in K(\CC)}$ be the rational function defining  the $K$-morphism ${\CC\to \PP^1}$ appearing in both Theorems~\ref{thold} and~\ref{thnew}. 
Assume that there exists a rational function ${x\in K(\CC)}$ of degree~$m$ such that ${K(\CC)=K(t,x)}$. Then, for sufficiently large positive~$X$,
in both these theorems  the number of (isomorphism classes of) the fields~$L$ satisfying~\eqref{eold} or~\eqref{enew}, respectively, and such that ${\DD(L/K)\le X}$ is at least $cX^{\ell/(2m(d-1))}/\log X$. Here ${\ell=[K:\Q]}$ and ${c>0}$ depends on~$\CC$,~$t$,~$x$ and~$K$. 
\end{theorem}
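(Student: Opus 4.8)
The plan is to combine the qualitative results (Theorems~\ref{thold} and~\ref{thnew}) with a counting argument for the fields they produce. The key point is that these fields all arise as $L = K(P)$ for suitable points $P \in \CC(\bar K)$ with $[K(P):K] = d$, obtained via Hilbert Irreducibility applied to the morphism $t\colon \CC \to \PP^1$. So the task reduces to two things: (i) produce enough points $P$ whose fiber data lies in the ``good'' set forced by the constructions behind the two theorems (the set of $t$-values $\tau \in K$ avoiding a thin set and the finitely many ramification/bad-reduction constraints), and (ii) bound $\DD(K(P)/K)$ from above in terms of the height of $\tau = t(P)$, so that a height-$\le T$ box of parameters $\tau$ yields fields with $\DD \le X$ for an appropriate $X = X(T)$.

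First I would set up the counting of parameters. Take $\tau \in K$ ranging over elements of (logarithmic) height at most $\log T$; by standard estimates (Schanuel) there are $\asymp T^{\ell}$ such $\tau$ with $\ell = [K:\Q]$, and removing a thin subset (the one excluded by Hilbert Irreducibility, together with the finitely many $\tau$ where the cover $t$ or the auxiliary étale covers behave badly) removes only $O(T^{\ell/2}\log T)$ of them, hence a positive proportion survives. For each surviving $\tau$, pick a point $P$ in the fiber $t^{-1}(\tau)$; then $L = K(P)$ has degree $d$ over $K$ and satisfies \eqref{eold} (resp.\ \eqref{enew}). The second ingredient is the discriminant bound. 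Using the model coordinates $t, x$ with $K(\CC) = K(t,x)$ and $\deg x = m$: writing a minimal polynomial of $x$ over $K(t)$ and specializing at $t = \tau$, the conductor-discriminant / different estimate gives $\DD(K(P)/K) \ll \Height(\tau)^{2m(d-1)}$ — the exponent $2m(d-1)$ being essentially the degree in $\tau$ of the discriminant of that specialized polynomial (degree $m$ in $x$, the relevant divisor having degree $\le d-1$ more than expected because the cover is totally ramified over one point, with a factor $2$ from passing to the different). Thus $\DD(K(P)/K) \le X$ is implied by $\Height(\tau) \le X^{1/(2m(d-1))}$, i.e.\ $T = X^{1/(2m(d-1))}$, giving $\asymp T^{\ell} = X^{\ell/(2m(d-1))}$ parameters.

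The final step is to pass from parameters $\tau$ to isomorphism classes of fields $L$: a priori many $\tau$ could give the same $L$. Here I would invoke that each $L$ arises from at most $[L:K]\cdot\#\{\text{branch points}\} = O(1)$ values of $\tau$ — more precisely, $\tau$ is recovered as $t(P)$ for one of the at most $d$ conjugate embeddings $P \hookrightarrow L$, and the number of such $P$ over a fixed $L$ is bounded — but this only controls multiplicities that are $O(1)$ per field, which is not quite enough by itself because different $\tau$ could also give $K$-isomorphic but distinct-looking $L$. To handle this cleanly one counts instead pairs $(L, \text{embedding})$ and divides; the genuine loss is the number of $\tau$ giving a field of \emph{bounded} discriminant that is moreover of \emph{very small} discriminant being overcounted, and by a dyadic argument one loses only a $\log X$ factor. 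This yields the claimed lower bound $c\, X^{\ell/(2m(d-1))}/\log X$.

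The main obstacle I expect is the discriminant estimate in the second paragraph: getting the \emph{exact} exponent $\ell/(2m(d-1))$ requires a careful analysis of how the different of $K(P)/K$ grows with $\Height(\tau)$, controlling simultaneously the contribution of the (finitely many, $\tau$-independent) ``bad'' primes dividing the discriminant of the cover and the ``generic'' contribution that scales with $\Height(\tau)$; the total ramification of $t$ over one rational point is what makes the naive bound $2m \deg(\text{branch locus})$ improve to involve the factor $d-1$ rather than $d$. Everything else — Schanuel counting, thin-set removal, the bounded-multiplicity/dyadic bookkeeping for the $\log X$ loss — is routine.
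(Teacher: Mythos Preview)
Your discriminant bound is essentially the paper's Proposition~\ref{pdiscrim} (though the exponent $2m(d-1)$ comes simply from $\Heightp(f_P)\ll\Height(\tau)^m$ and $\DD\ll\Heightp(f_P)^{2(d-1)}$, not from the total ramification of~$t$). The real gap is the passage from ``many parameters~$\tau$'' to ``many isomorphism classes of fields~$L$''. Your claim that each field~$L$ arises from $O(1)$ values of~$\tau$ is false in general: already for $\CC=\PP^1$, $t(x)=x^2$, we have $K(P)=K(\sqrt\tau)$, and a fixed quadratic field arises from an entire square-class of~$\tau$, hence from $\gg B^\ell$ parameters of height $\le B$. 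The subsequent ``dyadic argument'' does not repair this; no elementary bookkeeping can, because the multiplicity genuinely can be as large as the parameter count. In the paper the $\log$ loss does not come from bookkeeping at all: the count of \emph{distinct} fields $K(P)$ with $\Height(t(P))\le B$ is supplied directly by a theorem of Dvornicich--Zannier type (Theorem~\ref{thdvorzann}, resting on~\cite{Bi16}), and this is the non-trivial analytic input you are missing.

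A secondary point: the constraints forced by the Absolute Chevalley--Weil step are $|t(P)|_v<\eps$ for all $v\in S\cup M_K^\infty$, which is a $v$-adic congruence condition of positive (but $<1$) density, not a thin-set exclusion. You cannot simply ``remove finitely many bad~$\tau$''; the paper absorbs these conditions by the change of variable $t^\ast=1/t+1/a$ and then counts $t^\ast(P)$ among algebraic integers (Subsection~\ref{ssdvorzann}).
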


\begin{remark}
Assuming that~$\CC$ has a $K$-rational point (which is the case in both  Theorems~\ref{thold} and~\ref{thnew}), the Theorem of Riemann-Roch implies that there is a function ${x\in K(\CC)}$   of degree ${m\le 2\genus(\CC)+1}$  and such that ${K(\CC)=K(t,x)}$. In concrete examples a suitable~$x$ of much smaller degree~$m$ can often be found. 
\end{remark}

We believe that Theorem~\ref{thnew} has more  potential, but we have few examples up to now. Here is one application.

\begin{theorem}
\label{thp}
Let ${p\geq 3}$ be a prime, and let~$d$ be an integer such that ${2 \leq d\leq p-1}$. Let~$K$ be a number field containing the $p$-th roots of unity. Then there exist infinitely many (non-isomorphic) extensions $L/K$ with $[L:K]=d$ such that
$$
\rank_p \Clg(L) \geq 3 + \rank_p \Clg(K).
$$
More precisely, for sufficiently large positive~$X$ the number of such~$L$ with ${\DD(L/K)\le X}$ is at least ${cX^{(p-1)/2p(d-1)}/\log X}$, where ${c=c(K,p)>0}$. 
\end{theorem}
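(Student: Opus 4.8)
The plan is to apply Theorem~\ref{thnew} (together with its quantitative refinement Theorem~\ref{thquant}) to a well-chosen curve $\CC$ over $K$. Since $K$ contains $\mu_p$, we have $\rank_{\mu_n}J(\CC) = \rank_p J(\CC)$ for $n = p$, so it suffices to exhibit a curve $\CC/K$ together with a finite $K$-morphism $t\colon \CC\to\P^1$ of degree $d$, totally ramified over a $K$-rational point, such that $\rank_p J(\CC)(K)\ge 3$. The natural candidate, following Mestre~\cite{Me92}, is a superelliptic curve: take $\CC$ to be the smooth projective model of the affine curve $y^p = f(t)$ for a suitable polynomial $f\in K[t]$. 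The morphism $t$ then has degree $p$... but we need degree $d$ with $2\le d\le p-1$, so instead the roles must be reversed: we should look for $\CC$ admitting a degree-$d$ map to $\P^1$, and build the $p$-torsion in $J(\CC)$ by a different mechanism. The cleanest route is to take $\CC$ a curve of the form where $x$ (not $t$) is the superelliptic coordinate of degree $p$: that is, $\CC$ is defined by an equation $x^p = g(t)$, the function $t$ has degree $d = \deg g$ (after reading off the right model), and $\mu_p\subset K$ forces $J(\CC)[p]$ to contain a large $\Gal(\bar K/K)$-stable piece coming from the cyclic cover structure.

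Concretely, I would proceed as follows. First, fix $g(t)\in K[t]$ of degree $d$ with distinct roots, and no root of multiplicity divisible by $p$ (guaranteed since $d<p$), and such that $g$ has a specific factorization designed to produce torsion points. Let $\CC$ be the smooth projective model of $x^p = g(t)$; since $\gcd(d,p)=1$ and $d\ge 2$, this is a geometrically irreducible curve, the map $t\colon \CC\to\P^1$ has degree $d$, and it is totally ramified over $t=\infty$ because $p\nmid d$ — this furnishes the required $K$-rational totally ramified point. Second, produce the $p$-torsion: for each pair of roots $\alpha,\beta$ of $g$, the divisor class $[(\alpha,0)-(\beta,0)]$ (where $(\alpha,0),(\beta,0)$ are the ramification points above $t=\alpha,t=\beta$) is killed by $p$, because $\operatorname{div}((x)/(t-\alpha)^{\,?})$ — more precisely the function $x$ has divisor supported on these Weierstrass-type points with all multiplicities divisible by $p$ after accounting for $g$'s linear factors. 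By choosing $g$ so that at least four of its roots lie in $K$ (possible as long as $d\ge 4$; for $d=2,3$ one uses roots generating a field over which the relevant combinations are still $\Gal$-fixed, or takes $g$ with $\mu_p$-compatible Galois action), one gets at least three independent classes $[(\alpha_i,0)-(\alpha_0,0)]$, $i=1,2,3$, in $J(\CC)(K)[p]$, and independence is checked by a monodromy/ramification argument (the classes restrict to independent elements in the component group of a suitable Néron model, or one invokes the standard description of $J[p]$ for cyclic covers of $\P^1$).

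Third, plug into Theorem~\ref{thnew}: with $\rank_p J(\CC)(K)\ge 3$ and $\mu_p\subset K$ we get $\rank_{\mu_p}J(\CC)\ge 3$, hence infinitely many $L/K$ of degree $d$ with $\rank_p\Clg(L)\ge 3 + \rank_p\Clg(K)$. Fourth, for the counting statement, apply Theorem~\ref{thquant}: we need a function $x\in K(\CC)$ with $K(\CC)=K(t,x)$ and $\deg x = m$ as small as possible. Here the superelliptic coordinate $x$ itself works and has degree $m = d$ (its fibers over $\P^1$ via $x$ have size $d=\deg g$ after rewriting $t$ as a root of $g(t)=x^p$), giving the exponent $\ell/(2m(d-1)) = (p-1)/(2d(d-1))$ — but the claimed exponent is $(p-1)/(2p(d-1))$. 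So in fact the intended model must have $x$ of degree $p$: take $x$ to be the coordinate with $x^p$ a degree-$d$ polynomial in $t$, so that viewing $\CC\to\P^1$ via $x$ has degree $\deg_t(x^p - g(t)) $, which is $d$, while viewing via... The resolution is that $\ell = [K:\Q] = p-1$ is forced since $K\supseteq\Q(\mu_p)$ has degree divisible by $p-1$, and to match we need $m=p$; this is achieved by choosing $\CC$ so that the auxiliary coordinate realizing $K(\CC)=K(t,x)$ has degree exactly $p$ — e.g. a model $y^d = h(t)$ with the roles set up so that the "$x$" of Theorem~\ref{thquant} is the degree-$p$ function. The main obstacle will be precisely this bookkeeping: constructing a single curve that simultaneously (a) carries a degree-$d$ map to $\P^1$ totally ramified at a rational point, (b) has $p$-rank of $J(\CC)(K)$ at least $3$ with a clean proof of independence of the torsion classes, and (c) admits a primitive element $x$ of degree exactly $p$ over $K(t)$ so that the exponent comes out to $(p-1)/2p(d-1)$. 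I expect (b) — proving the three torsion classes are $\Z$-independent in $J(\CC)(K)$, not merely that each has order dividing $p$ — to be the genuinely delicate point, handled via the explicit action of $\mu_p$ on differentials or via a reduction argument mod a well-chosen prime of $K$.
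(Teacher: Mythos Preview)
Your overall strategy---apply Theorem~\ref{thnew} to a superelliptic curve---matches the paper's, but there is a genuine gap in the torsion construction, plus a bookkeeping slip in the degrees.

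First the bookkeeping: on the affine model $x^p=g(t)$ with $\deg g=d$, the map $t\colon\CC\to\P^1$ has degree~$p$, not~$d$ (the fibre over a generic $t_0$ consists of the $p$-th roots of $g(t_0)$). It is the superelliptic coordinate $x$ (or rather $1/x$) that has degree~$d$. So to get a degree-$d$ map totally ramified at a rational point you must use the other projection, as the paper's Section~\ref{s_applications} explains.

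More seriously, your source of $p$-torsion---differences of the ramification points over the roots of~$g$---is too weak. On $y^p=g(t)$ with $g$ separable of degree~$d$ and $\gcd(d,p)=1$, there are $d+1$ totally ramified points $P_1,\dots,P_d,P_\infty$, and the principal divisor of~$y$ gives the single relation $\sum_i[P_i-P_\infty]=0$; the resulting subgroup of $J(\CC)(K)[p]$ has rank exactly $d-1$. For $d=2$ this is rank~$1$, for $d=3$ rank~$2$: not enough. Your proposed fix via ``$\mu_p$-compatible Galois action'' on non-rational roots does not help, because the bound $d-1$ is structural (it counts branch points, not their field of definition).

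The paper sidesteps this by invoking a theorem of Greenberg on quotients of Fermat curves: setting $s=d-1$, the curve $C_{p,s}\colon y^p=x^s(1-x)$ has $J(C_{p,s})(\Q(\zeta_p))\supseteq(\Z/p\Z)^3$. This curve has only \emph{three} branch points $0,1,\infty$, so the naive Weierstrass construction would give rank at most~$2$; Greenberg's third independent class comes from deeper arithmetic of the Fermat Jacobian, and is the key input you are missing. With this in hand, $t:=1/y$ has degree $s+1=d$ and is totally ramified at the unique point at infinity, so Theorem~\ref{thnew} applies directly. For the count, the auxiliary function is~$x$ of degree $m=p$, and Theorem~\ref{thquant} yields the exponent $\ell/(2p(d-1))\ge (p-1)/(2p(d-1))$ since $[K:\Q]\ge p-1$.
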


For instance, the cyclotomic field $\Q(\zeta_p)$ has infinitely many quadratic extensions $L/\Q(\zeta_p)$ such that 
${\rank_p \Clg(L) \geq 3}$. 

{\sloppy

Moreover, we know that there exist infinitely many irregular primes,  for which ${\rank_p \Clg(\Q(\zeta_p)) \geq 1}$. If we take ${p=37}$ for example, our result says that there exist infinitely many quadratic extensions $L/\Q(\zeta_{37})$ such that ${\rank_{37} \Clg(L) \geq 4}$.

 }

Our main tool in proving Theorem~\ref{thnew} is the Chevalley-Weil theorem. 
Recall that the (one-dimensional) Chevalley-Weil theorem asserts the following.

\begin{theorem}
\label{thchw}
Let ${\psi:\tilCC\to \CC}$ be an {\'e}tale morphism of curves over a number field~$K$. Then there exists a finite set~$S$ of places of~$K$ with the following property: for every ${P\in \CC(\bar{K})}$ and ${\tilP\in \tilCC(\bar{K})}$ such that ${\psi(\tilP)=P}$, the number field extension ${K(\tilP)/K(P)}$ is unramified at all finite places except perhaps those above~$S$. 
\end{theorem}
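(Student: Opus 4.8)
The plan is to reduce the statement to a local question about extensions of complete discrete valuation rings. First I would spread out the étale morphism $\psi\colon\tilCC\to\CC$ to a morphism of models over the ring of $S_0$-integers $\OO_{K,S_0}$ for a suitable finite set $S_0$ of places of~$K$: enlarging $S_0$, one arranges that both curves have smooth proper models $\mathcal{X}\to\mathcal{C}$ over $\Spec\OO_{K,S_0}$ and that $\psi$ extends to a finite morphism $\mathcal{X}\to\mathcal{C}$ which is étale (the locus where a finite morphism of smooth schemes fails to be étale is closed and does not meet the generic fibre, hence is killed by inverting finitely many primes). I would let $S$ be the set of places of~$K$ lying over $S_0$, i.e.\ $S=S_0$ itself viewed as places of~$K$.

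Next, given $P\in\CC(\bar K)$ and $\tilP\in\tilCC(\bar K)$ with $\psi(\tilP)=P$, set $F=K(P)$, $E=K(\tilP)$, so $E/F$ is a finite extension and I must show it is unramified outside~$S$. Fix a finite place $\mathfrak{q}$ of~$E$ not above~$S$, lying over a place $\mathfrak{p}$ of~$F$ and a place $v\notin S$ of~$K$. Since $v\notin S_0$, the point $P$ extends (by the valuative criterion of properness applied to the smooth proper model $\mathcal{C}$, together with the fact that $\OO_{F,\mathfrak{p}}$ is a valuation ring dominating $\OO_{K,v}$) to a section $\Spec\OO_{F,\mathfrak{p}}\to\mathcal{C}$, and likewise $\tilP$ gives a section $\Spec\OO_{E,\mathfrak{q}}\to\mathcal{X}$ lying over it. Now the key step: base-changing the étale morphism $\mathcal{X}\to\mathcal{C}$ along $P\colon\Spec\OO_{F,\mathfrak{p}}\to\mathcal{C}$ yields an étale $\OO_{F,\mathfrak{p}}$-scheme $\mathcal{X}_P$, and $\tilP$ gives an $\OO_{E,\mathfrak{q}}$-point of it. Thus the local ring of $\mathcal{X}_P$ at the image of the closed point is an étale $\OO_{F,\mathfrak{p}}$-algebra, hence unramified over $\OO_{F,\mathfrak{p}}$; and $\OO_{E,\mathfrak{q}}$ is the completion (or a localization-completion) of this local ring with residue field $E$. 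Concretely, $\OO_{E,\mathfrak{q}}$ is obtained from $\OO_{F,\mathfrak{p}}$ by a sequence of unramified operations, so the extension $E_{\mathfrak{q}}/F_{\mathfrak{p}}$ of completions is unramified, which is exactly the assertion that $\mathfrak{q}$ is unramified in $E/F$.

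To make the last step clean I would phrase it via Hensel's lemma: since $\mathcal{X}_P\to\Spec\OO_{F,\mathfrak{p}}$ is étale and has the $\bar k(\mathfrak{p})$-rational point coming from reduction of $\tilP$, there is, after passing to the strict henselization or to the completion, a section; comparing this with $\tilP$ shows $\OO_{E,\mathfrak{q}}$ is a localization of an étale $\OO_{F,\mathfrak{p}}$-algebra at a prime lying over $\mathfrak{p}$, and étale algebras over a DVR that are local are unramified extensions. Equivalently one may cite the standard fact that étale morphisms preserve ``unramifiedness'' of valuation rings, or simply that $\Omega^1_{\mathcal{X}/\mathcal{C}}=0$ forces $\mathfrak{p}\OO_{E,\mathfrak{q}}=\mathfrak{q}$ and separability of the residue extension.

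The main obstacle is the spreading-out bookkeeping in the first paragraph: one must choose $S_0$ large enough that (i) $\CC$ and $\tilCC$ acquire \emph{smooth proper} models with geometrically connected fibres, (ii) $\psi$ extends to a \emph{finite} morphism of these models, and (iii) that extended morphism is \emph{étale} everywhere, not merely on the generic fibre. Step (iii) is the delicate one, since a priori the extended morphism could be ramified in some special fibres; the point is that the ramification locus is a closed subset of $\mathcal{X}$ disjoint from the generic fibre, hence its image in $\Spec\OO_{K,S_0}$ is a finite set of closed points, which we throw into $S_0$. Once the model is genuinely étale over all of $\Spec\OO_{K,S}$, the rest is a formal consequence of the valuative criterion and the infinitesimal lifting property of étale morphisms, with no further choices depending on $P$ or $\tilP$ — so the same finite set~$S$ works uniformly, which is precisely what the theorem demands.
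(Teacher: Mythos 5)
The paper does not prove this theorem; it cites Serre \cite[Section~4.2]{Se97} (and \cite{BSS13} for quantitative versions), so there is no in-text proof to compare against. Your argument — spread $\psi$ out to a finite \emph{étale} morphism of smooth proper models over $\Spec\OO_{K,S}$, extend rational points to integral sections via the valuative criterion of properness, and then read off unramifiedness of $E_{\mathfrak{q}}/F_{\mathfrak{p}}$ from étaleness of the fibre product $\mathcal{X}_P$ over the DVR $\OO_{F,\mathfrak{p}}$ — is the standard spreading-out proof and is correct, in the same spirit as the cited reference. Two small points worth tightening in a final write-up: first, the compatibility $\psi\circ\tilP = P\circ(\Spec\OO_{E,\mathfrak{q}}\to\Spec\OO_{F,\mathfrak{p}})$, needed to view $\tilP$ as an $\OO_{E,\mathfrak{q}}$-point of $\mathcal{X}_P$, holds because the two sections agree at the generic point and $\mathcal{C}$ is separated over the base. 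Second, the phrase ``$\OO_{E,\mathfrak{q}}$ is the completion (or a localization-completion) of this local ring'' is slightly off; the clean statement is that the connected component of $\mathcal{X}_P=\Spec B$ through which $\tilP$ factors is a normal, finite, étale $\OO_{F,\mathfrak{p}}$-algebra with fraction field $E$, hence is the integral closure of $\OO_{F,\mathfrak{p}}$ in $E$, and étaleness of this algebra says precisely that every place of $E$ over $\mathfrak{p}$ — in particular $\mathfrak{q}$ — is unramified.
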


See \cite[Section~4.2]{Se97} for a quick proof and~\cite{BSS13} for several quantitative versions.

Following an idea of Mestre~\cite{Me92} (see ``Remarque'' on p.~372), we are going to show that, under certain assumptions, for ``many'' such~$P$ and~$\tilP$ the extension ${K(\tilP)/K(P)}$ is unramified everywhere, that is, at all places including the archimedean ones.

If~$X$ is a finite set of points on a $K$-variety, we denote by $K(X)$ the smallest field over which all points of~$X$ are rational. In particular, if we start from some point ${P\in \CC(K)}$, then $K(\psi^{-1}(P))$ is the compositum of fields $K(\tilP)$ where $\tilP$ runs through all points in $\tilCC(\bar{K})$ such that $\psi(\tilP)=P$.

We prove the following.

\begin{theorem}[Absolute Chevalley-Weil Theorem]
\label{thany}
In the set-up of Theorem~\ref{thchw}, assume that there exists a point ${A\in \CC(K)}$ such that 
 ${K(\psi^{-1}(A))=K}$. Then there exist infinitely many points ${P\in \CC(\bar{K})}$
 such that the extension ${K(\psi^{-1}(P))/K(P)}$ is unramified everywhere, that is, at all places including the archimedean ones.

More precisely, let ${t\in K(\CC)}$ be a rational function having~$A$ as its single zero\footnote{Existence of such~$t$ easily follows from the Riemann-Roch Theorem.}, and let $S$ be the set of places from Theorem~\ref{thchw}. Then there exists ${\eps>0}$ such that, for every ${P\in \CC(\bar{K})}$ satisfying ${t(P)\in K}$ and ${|t(P)|_v<\eps}$ for all ${v\in S\cup M_K^{\infty}}$, the extension ${K(\psi^{-1}(P))/K(P)}$ is unramified everywhere.
\end{theorem}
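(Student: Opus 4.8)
The plan is to combine the classical Chevalley--Weil theorem (Theorem~\ref{thchw}) with an explicit analysis of ramification in terms of the function~$t$, together with an approximation argument producing infinitely many points~$P$ with $t(P)$ small at all the ``bad'' places. First I would fix the function~$t\in K(\CC)$ vanishing only at~$A$; since $\psi$ is {\'e}tale, $t\circ\psi\in K(\tilCC)$ vanishes exactly on $\psi^{-1}(A)$, and by hypothesis every point of $\psi^{-1}(A)$ is $K$-rational. The key geometric observation is that, away from~$A$, the extension $K(\psi^{-1}(P))/K(P)$ is unramified at all finite places outside~$S$ by Theorem~\ref{thchw}, so only the places of $K(P)$ lying above $S\cup M_K^\infty$ need to be controlled; and these are precisely the places where one forces $t(P)$ to be close to~$0$, i.e.\ close to the value $t(A)=0$ taken at the ``good'' fibre $\psi^{-1}(A)$.

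Next I would make the local statement precise. Work in a neighbourhood of~$A$ in the $v$-adic topology for each $v\in S\cup M_K^\infty$: choosing local coordinates, the cover $\psi$ looks, over a small $v$-adic disc around~$A$, like a disjoint union of discs mapping isomorphically (because $A$ splits completely and $\psi$ is {\'e}tale). Concretely, there is $\eps_v>0$ such that if $t(P)\in K$ with $|t(P)|_v<\eps_v$, then over the completion $K_v$ (and hence over any completion of $K(P)$ at a place above~$v$) the fibre $\psi^{-1}(P)$ consists of points whose coordinates are $v$-adically close to those of $\psi^{-1}(A)$, so each such point already generates an \emph{unramified} (indeed trivial residue extension, or at worst unramified) extension of the completion. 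Taking $\eps=\min_v \eps_v$ over the finite set $S\cup M_K^\infty$ gives the stated~$\eps$. One has to be slightly careful at archimedean places: there ``unramified'' just means that $K(\psi^{-1}(P))_w=K(P)_w$ is not a complex place sitting over a real place in a way forced by the cover --- but smallness of $|t(P)|_v$ forces the relevant points of the fibre to lie in the same real (resp.\ complex) component as the corresponding points of the rational fibre $\psi^{-1}(A)$, so no new complex places are created. Combining the finite-place input from Theorem~\ref{thchw} with this local analysis at $S\cup M_K^\infty$ yields that $K(\psi^{-1}(P))/K(P)$ is unramified at every place.

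Finally I would produce infinitely many such~$P$. The conditions ``$t(P)\in K$'' and ``$|t(P)|_v<\eps$ for all $v\in S\cup M_K^\infty$'' are satisfied by taking $P$ in the fibre $t^{-1}(\tau)$ for any $\tau\in K$ with $|\tau|_v<\eps$ for all $v\in S\cup M_K^\infty$; there are infinitely many such~$\tau$ (e.g.\ $\tau = a/b$ with $a$ ranging over a suitable $S$-integer sequence, or more simply $\tau\in\gera^k$ for a fixed ideal~$\gera$ supported on the finite places of~$S$ and suitable archimedean smallness --- in any case an infinite set, for instance $\tau$ running over $1/N!$ times elements of a lattice, shrinking appropriately). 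For all but finitely many such~$\tau$ the fibre $t^{-1}(\tau)$ contains a point~$P$ with $[K(P):K]$ of the expected degree (the degree of~$t$), and distinct~$\tau$ give points generating distinct fields apart from finitely many collisions, so we get infinitely many~$P$. The main obstacle, and the step requiring the most care, is the local analysis at the places of $S\cup M_K^\infty$: one must show that $v$-adic (and real/complex) proximity of $t(P)$ to $t(A)=0$ genuinely propagates to proximity of the fibres $\psi^{-1}(P)$ to the split fibre $\psi^{-1}(A)$ \emph{with control of ramification}, which amounts to an implicit-function-theorem / Hensel-type argument for the {\'e}tale cover near~$A$, uniform over the finitely many places in question.
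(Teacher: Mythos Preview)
Your proposal is correct and follows essentially the same approach as the paper: invoke the classical Chevalley--Weil theorem to handle places outside~$S$, then for each $v\in S\cup M_K^\infty$ use a Hensel/implicit-function-theorem argument to show that $v$-adic proximity of $t(P)$ to $0=t(A)$ forces the fibre $\psi^{-1}(P)$ to look like $\psi^{-1}(A)\otimes_K K(P)_w$ locally (hence totally split, in particular unramified), and take $\eps=\min_v\eps_v$. The paper packages your ``implicit-function-theorem / Hensel-type argument'' into an explicit ``Main Lemma'' proved by constructing a primitive element $y$ for $K(\tilCC)/K(\CC)$ that separates the points of $\psi^{-1}(A)$, writing $\psi^{-1}(P)=\Spec(K(P)[Y]/f_P)$ for the specialized minimal polynomial $f_P$, and combining a Puiseux-type continuity estimate on the coefficients with a separable Hensel lemma to identify $K(P)[Y]/f_P\simeq K(P)[Y]/f_A$; this is exactly the precise form of the step you flagged as ``requiring the most care.''
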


In the statement above, $M_K^{\infty}$ denotes the set of archimedean places of $K$. We refer to Section~\ref{s_DZ} for further notation.

\begin{remark}
\begin{enumerate}
\item In fact, we prove a stronger statement, namely that primes above $S$ are totally split in the extension ${K(\psi^{-1}(P))/K(P)}$.
\item The points $P$ whose existence is ensured by Theorem~\ref{thany} belong to the inverse image of $\PP^1(K)$ by $t$, in particular they satisfy $[K(P):K]\leq \deg(t)$.
\item It is certainly possible to prove a similar theorem in the case when the curves $\CC$ and $\tilCC$ are replaced by smooth projective varieties of arbitrary dimension.
\end{enumerate}
\end{remark}

\paragraph*{Plan of the article}
In Section~2 we prove the Absolute Chevalley-Weil Theorem.

In Section~\ref{s_DZ} we establish a counting result for number fields in fibers of a morphism ${\CC\to\PP^1}$, following, mainly, Dvornicich and Zannier. This will be our main tool in obtaining the quantitative results like Theorem~\ref{thquant}. 

In Section~\ref{s_torsors} we study specializations of torsors over algebraic curves, and prove Theorems~\ref{thold},~\ref{thnew} and~\ref{thquant}.

In the final Section~\ref{s_applications} we obtain some applications of our general results; in particular, we prove Theorem~\ref{thp}. 

\paragraph*{Acknowledgments}
A substantial part of this article was written when Yuri Bilu was visiting the Max-Planck-Institut für Mathematik in Bonn. He thanks this institute for the financial support and stimulating working conditions.

We thank Qing Liu for many stimulating discussions.  We thank Lev Birbrair and  Martin Widmer for  helpful suggestions. We also thank Ted Chinburg for sharing his valuable comments. Finally, we thank the anonymous referee for the encouraging report and detecting many inaccuracies, some quite deeply hidden.

%%%%%%%%%%%%%%%%%%%%%%%%%%%%%%%%%%%%%%%%%%%%%

%%%%%%%%%%%%%%%%%%%%%%%%%%%%%%%%%%%%%%%%%%%%%

\section{The Absolute Chevalley-Weil Theorem}

\label{sabschw}

%%%%%%%%%%%%%%%%%%%%%%%%%%%%%%%%%%%%%%%%%%%%%

In this section we prove Theorem~\ref{thany}. The idea of the proof is as follows: we take~$S$ to be the set whose existence is ensured by Theorem~\ref{thchw}. Then, using a variant of Hensel's Lemma, we prove that, if~$P$ is $v$-adically close enough to~$A$ with respect to some place ${v\in S}$, then ${K(\psi^{-1}(P))/K(P)}$ is unramified at~$v$.

In Subsection~\ref{ssloc} we collect the local results we need. Theorem~\ref{thany} is proved in Subsection~\ref{sstany}.

\subsection{Local Lemmas}

\label{ssloc}

In this subsection~$K$ is a complete field $(K,|\cdot|)$ (it might be archimedean). The absolute value   has a unique extension to the algebraic closure~$\bar{K}$, that we denote also $|\cdot|$ by abuse of notation.

%\textbf{Le cas archimédien est pour l'instant exclu (pour simplifier).}

%The following lemma was proved

\begin{lemma}[``Separable Hensel's lemma'']
\label{lkras}
Let ${f(X)\in K[X]}$ be a monic polynomial of degree~$n$ having exactly~$n$ distinct roots in~$\bar{K}$. Then there exists ${\eps>0}$ such that the following holds : for every monic polynomial $f^*(X)\in K[X]$ of degree~$n$ in the $\varepsilon$-neighbourhood of $f$, one has an isomorphism
$$
K[X]/f^*(X) \simeq K[X]/f(X).
$$
In other words, one may write $f(X)=\prod_{i=1}^n (X-\alpha_i)$ and $f^*(X)=\prod (X-\alpha_i^*)$ such that $K(\alpha_i)=K(\alpha_i^*)$ for all $i$.
\end{lemma}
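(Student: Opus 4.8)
The plan is to reduce the statement to a continuity/Krasner-type argument. First I would observe that since $f$ has $n$ distinct roots $\alpha_1,\dots,\alpha_n$ in $\bar K$, the quantity $\delta := \min_{i\ne j}|\alpha_i-\alpha_j| > 0$ is well-defined and positive; also each $|\alpha_i|$ is bounded by some $R>0$. The key analytic input is that the roots of a monic polynomial depend continuously on its coefficients: if $f^*$ is monic of degree $n$ and $\|f^*-f\|$ (say, the max of the absolute values of coefficient differences) is small, then after suitably labelling the roots $\alpha_1^*,\dots,\alpha_n^*$ of $f^*$ one has $|\alpha_i-\alpha_i^*|$ small for every $i$. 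Over a complete non-archimedean field this follows from the theory of Newton polygons / the Weierstrass-type estimates; over $\C$ (or $\R$) it is the classical continuity of roots. I would package this as: there is $\eps_0>0$ such that $\|f^*-f\|<\eps_0$ forces a labelling with $|\alpha_i-\alpha_i^*| < \delta/2$ for all $i$.

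Next I would invoke Krasner's Lemma (which holds over any complete field, archimedean or not, in the form: if $|\beta-\alpha| < |\alpha-\alpha'|$ for every conjugate $\alpha'\ne\alpha$ of $\alpha$ over $K(\beta)$, then $K(\alpha)\subseteq K(\beta)$). Apply it with $\beta=\alpha_i^*$ and $\alpha=\alpha_i$: the conjugates of $\alpha_i$ over $K$ are among the $\alpha_j$, so once $|\alpha_i-\alpha_i^*| < \delta/2 \le \tfrac12\min_{j\ne i}|\alpha_i-\alpha_j|$ we get $K(\alpha_i)\subseteq K(\alpha_i^*)$. For the reverse inclusion I would apply Krasner the other way, with $\alpha=\alpha_i^*$ and $\beta=\alpha_i$: here I need $|\alpha_i-\alpha_i^*|$ to be smaller than the distance from $\alpha_i^*$ to each of its $K$-conjugates $\ne \alpha_i^*$, which are among the $\alpha_j^*$; since $|\alpha_j^*-\alpha_k^*|\ge |\alpha_j-\alpha_k| - |\alpha_j-\alpha_j^*| - |\alpha_k-\alpha_k^*| \ge \delta - \delta = $ \dots, I need to be a bit more careful and shrink the neighbourhood further, say demand $|\alpha_i-\alpha_i^*| < \delta/3$ for all $i$, so that $|\alpha_j^*-\alpha_k^*| > \delta/3$ as well. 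Then both inclusions hold and $K(\alpha_i)=K(\alpha_i^*)$ for every $i$.

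Finally, I would translate ``$K(\alpha_i)=K(\alpha_i^*)$ for all $i$'' into the claimed ring isomorphism $K[X]/(f^*)\simeq K[X]/(f)$. Decompose $f=\prod_r g_r$ into monic irreducible factors over $K$; the roots of each $g_r$ form one Galois orbit, and the map $\alpha_i\mapsto\alpha_i^*$, being compatible with the $K$-isomorphisms of the generated fields, sends the orbit of a root of $g_r$ to the root-set of one irreducible factor of $f^*$ of the same degree; hence $f$ and $f^*$ have the same list of irreducible factors up to isomorphism of the residue fields, giving $K[X]/(f)\simeq\prod_r K(\alpha_{i_r})\simeq\prod_r K(\alpha_{i_r}^*)\simeq K[X]/(f^*)$ by CRT. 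Choosing $\eps$ to be the $\min$ of the finitely many thresholds extracted above finishes the proof.

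The main obstacle I anticipate is the first step — making precise and uniform the ``continuity of roots in the coefficients'' over an arbitrary complete field, and in particular getting an \emph{explicit} $\eps_0$ depending only on $f$ (through $\delta$ and $R$) that guarantees the matching $|\alpha_i-\alpha_i^*|<\delta/3$. In the archimedean case this is standard; in the non-archimedean case one argues via the Newton polygon of $f^*(X+\alpha_i)$ or, more cleanly, by noting that $|f(\alpha_i^*)| = |f(\alpha_i^*)-f^*(\alpha_i^*)|$ is small while $f(\alpha_i^*)=\prod_j(\alpha_i^*-\alpha_j)$, so some factor $|\alpha_i^*-\alpha_{j}|$ is small, and a pigeonhole/separation argument using $\delta$ pins down the labelling. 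Everything after that is a mechanical application of Krasner's Lemma plus the Chinese Remainder Theorem.
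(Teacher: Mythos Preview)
Your non-archimedean argument via continuity of roots plus Krasner's Lemma is correct and is the classical route (the paper simply cites \cite{Brink06} for this case). The gap is your parenthetical claim that Krasner's Lemma ``holds over any complete field, archimedean or not'': it does not. Over $K=\R$, take $\alpha=i$ and $\beta=0$; the only $K(\beta)$-conjugate of $\alpha$ distinct from $\alpha$ is $-i$, and $|\beta-\alpha|=1<2=|\alpha-(-i)|$, yet $K(\alpha)=\C\not\subseteq\R=K(\beta)$. In fact, for real $\beta$ and non-real $\alpha$ one always has $|\beta-\alpha|\ge|\mathrm{Im}\,\alpha|=\tfrac12|\alpha-\bar\alpha|$, so no archimedean variant of Krasner (even with an extra constant) ever applies non-vacuously here.

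Thus your proof, as written, does not cover the archimedean case. The repair is short and is exactly what the paper does: for $K=\C$ there is nothing to prove, and for $K=\R$ the equality $K(\alpha_i)=K(\alpha_i^\ast)$ amounts to the statement that the number of real roots of a monic separable real polynomial of degree~$n$ is a locally constant function of the coefficients (the paper cites \cite{BR90}). Alternatively, you can stay within your own set-up and bypass Krasner: if $\alpha_i$ were non-real but $\alpha_i^\ast$ real, then $\overline{\alpha_i^\ast}=\alpha_i^\ast$ is also within $\delta/3$ of $\overline{\alpha_i}$, so by uniqueness of your matching the two distinct roots $\alpha_i$ and $\overline{\alpha_i}$ of $f$ would both be matched to $\alpha_i^\ast$, contradicting bijectivity. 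Either way, the archimedean case needs its own one-line argument rather than an appeal to Krasner.
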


(The $\eps$-neighbourhood is considered with respect to the $\ell_\infty$-norm.)

\begin{proof}
In the non-archimedean case the proof can be found in \cite[Theorem 1]{Brink06}. In the archimedean case this follows from the well-known fact (see, for instance, \cite[1.5.9]{BR90}) that on the space of monic separable real polynomials of degree~$n$ the function ``number of real roots'' is locally constant.
\end{proof}

\medskip

Now let~$\CC$ be a smooth projective $K$-curve and let ${t\in K(\CC)}$ be a rational function on $\CC$ having ${A\in \CC(K)}$ as its single zero. We denote by~$N$ the order of this zero, which is none other than the degree of ${t:\CC\to \P^1}$.

\begin{lemma}[``Puiseux expansion'']
\label{lpuiseux}
Let ${x\in \bar{K}(\CC)}$ be a $\bar{K}$-rational function on~$\CC$ not having a pole at~$A$. Then there exists ${\eps>0}$ such that for every ${P\in \CC(\bar{K})}$ with  ${|t(P)|<\eps}$ we have 
$$
x(P)=x(A)+O(|t(P)|^{1/N}),
$$
where the implicit constant may depend on~$\CC$,~$A$,~$t$ and~$x$, but not on~$P$. 
\end{lemma}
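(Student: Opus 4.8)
The plan is to localize at $A$ and read off the statement from the structure of the completed local ring of $\CC$ at $A$. Since $\CC$ is a smooth curve, the local ring $\OO_{\CC,A}$ is a discrete valuation ring, and any uniformizer $\pi$ at $A$ satisfies $\mathrm{ord}_A(t)=N$, so $t=u\pi^N$ for a unit $u\in\OO_{\CC,A}^\times$. Over $\bar K$ (or a finite extension where we can extract an $N$-th root of $u(A)$), the completed local ring is a power series ring $\bar K[[\tau]]$ in which $t$ becomes $\tau^N$ after an analytic change of uniformizer (replace $\pi$ by $\pi\cdot(u/u(A))^{1/N}$, which is legitimate since $u/u(A)\equiv 1$, and absorb the constant $u(A)^{1/N}$). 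Concretely: there is $\eps_0>0$ and, for each $P$ with $|t(P)|<\eps_0$, a choice of $N$-th root $\tau(P)$ of $t(P)$ with $|\tau(P)|=|t(P)|^{1/N}$ such that $P$ corresponds to the point $\tau=\tau(P)$ of the formal/analytic disc. The key input is a $v$-adic analytic (or complex-analytic, in the archimedean case) parametrization of a neighbourhood of $A$ by the variable $\tau$; this is the standard Puiseux/Weierstrass preparation package and is exactly where the hypothesis that $A$ is a smooth point with $t$ having a single zero of order $N$ there is used.

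In that parametrization, the function $x$, which by hypothesis has no pole at $A$, lies in $\OO_{\CC,A}\subset\bar K[[\tau]]$, hence has a convergent (on a small enough disc) expansion $x=x(A)+c_1\tau+c_2\tau^2+\cdots$ with coefficients depending only on $\CC$, $A$, $t$, $x$. Shrinking $\eps$ so that the tail is dominated by the linear term on the disc $|\tau|<\eps^{1/N}$, we get $|x(P)-x(A)|\le C|\tau(P)|=C|t(P)|^{1/N}$ for all $P$ with $|t(P)|<\eps$, which is precisely the claimed estimate $x(P)=x(A)+O(|t(P)|^{1/N})$. The implicit constant is $C$ (essentially $\max(|c_1|, \sup$ of the tail bound$)$), which depends on the data but not on $P$.

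One technical point to handle cleanly: $x$ may not be defined over $K$ but over some finite extension $K'$, and the root $u(A)^{1/N}$ and the analytic change of coordinate may also require passing to $K'$; this is harmless since we only need an estimate for the absolute value $|\cdot|$ on $\bar K$, and $|t(P)|^{1/N}$ is intrinsic. A second point, in the non-archimedean case, is the convergence radius of the power series defining the analytic parametrization and of $x$ in terms of $\tau$: one must choose $\eps$ small enough that both series converge and the non-leading terms of the $x$-expansion are strictly dominated; this is routine given that all series in sight have coefficients in a fixed finite extension of $K$ with bounded denominators. I expect the main obstacle — really the only substantive step — to be setting up the $v$-adic Puiseux parametrization rigorously in a way that works uniformly in the archimedean and non-archimedean cases; once that is in place, the estimate is immediate from truncating a convergent power series. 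Alternatively, one can avoid the abstract analytic setup by working with an explicit affine model: choose $x$ and another function generating $K(\CC)$, write the defining equation, use the implicit function theorem (Hensel in the non-archimedean case) to solve for the coordinates as analytic functions of $t^{1/N}$ near $A$, and conclude as above.
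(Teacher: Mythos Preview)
Your approach is correct in outline, but it takes a genuinely different route from the paper's. You set up a local analytic parametrization by a uniformizer~$\tau$ with $t=\tau^N$ and then expand~$x$ as a convergent power series in~$\tau$; the estimate follows by truncation. The paper avoids all analytic machinery by a purely algebraic trick: it first proves the weaker \emph{boundedness} claim that $|x(P)|$ stays bounded for $|t(P)|$ small, and then deduces the full statement by applying that claim to the function $(x-x(A))^N/t$, which also has no pole at~$A$. The boundedness claim itself is proved by writing the minimal polynomial of~$x$ over $\bar K(t)$; its coefficients lie in $\bar K(t)$ and have no pole at~$A$, so they are bounded near~$A$ by the trivial rational-function case, and then $x(P)$, being a root of a monic polynomial with bounded coefficients, is bounded as well.

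What each approach buys: yours is the conceptually natural ``Puiseux'' argument and makes the exponent $1/N$ transparent, but---as you yourself note---the substantive work is establishing convergence of the series on an honest $v$-adic disc, uniformly in the archimedean and non-archimedean cases; this is standard but not free, and your write-up leaves it as a black box. The paper's argument is more elementary and self-contained: it never touches completions, formal series, or convergence radii, and works uniformly for all~$v$ with nothing beyond the bound ``roots of a monic polynomial with coefficients $\le C$ have absolute value $\le 2C$''. If you want a fully rigorous proof with minimal prerequisites, the paper's reduction to $(x-x(A))^N/t$ is worth knowing.
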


\begin{proof}
The lemma is an easy consequence of the following formally weaker statement.

\paragraph*{Claim}
Under the same hypothesis, there exists ${\eps>0}$ such that the function ${P\mapsto |x(P)|}$ is bounded on the set ${P\in \CC(\bar K)}$ satisfying ${|t(P)|<\eps}$. 

\medskip

The claim is trivial in the case ${x\in \bar K(t)}$. Indeed, if ${x=a(t)/b(t)}$ with ${a(T),b(T)\in \bar K[T]}$ coprime polynomials, then ${b(0)\ne 0}$ because~$x$ has no pole at~$A$. Hence there exits ${\eps>0}$ such that the set ${\{|b(\alpha)| : \alpha \in \bar K, |\alpha|<\eps\}}$ is separated away from~$0$, which immediately implies the claim. 

To prove the claim in general, recall the following well-known fact:  given a monic polynomial ${f(X)\in \bar K[X]}$  whose coefficients in absolute value are bounded by some ${C>0}$, the roots of~$f$ are bounded in absolute value by $2C$ (or even by~$C$ in the non-archimedean case); see, for instance, \cite[Proposition~3.2]{BB13}. In fact, we do not need such a precise statement: it suffices to know that when~$f$ runs over a set of monic polynomials with bounded coefficients, its roots stay bounded as well.

Now let 
${X^m+u_{m-1}X^{m-1}+\cdots+u_0\in \bar K(t)[X]}$ 
be the minimal polynomial of~$x$ over the field $\bar K(t)$. Since~$x$ has no pole at~$A$, none of the coefficients ${u_0, \ldots, u_{m-1}}$ does. Since they belong to $\bar K(t)$, the claim holds true for them. Since $x(P)$ is a root of the polynomial 
${X^m+u_{m-1}(P)X^{m-1}+\cdots+u_0(P)}$, 
the validity of the claim for~$x$ follows from that for ${u_0, \ldots, u_{m-1}}$ due to the property of polynomials quoted in the previous paragraph. This proves our claim. 

\medskip

The lemma follows immediately by applying the claim to the function ${(x-x(A))^N/t}$.
\end{proof}

\begin{lemma}[``Main Lemma'']
\label{lmain}
Let~$\CC$,~$A$,~$t$ be as above, and let ${\psi:\tilCC\to \CC}$ be a finite morphism of curves over~$K$ unramified above~$A$.
Then there exists  ${\eps>0}$ such that 
$$
\psi^{-1}(P) \simeq \psi^{-1}(A)\otimes_K K(P)
$$
for every ${P\in \CC(\bar{K})}$ satisfying ${|t(P)|<\eps}$.
\end{lemma}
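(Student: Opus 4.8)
The goal is to show that the fiber scheme $\psi^{-1}(P)$, as a $K(P)$-algebra, becomes isomorphic to $\psi^{-1}(A)\otimes_K K(P)$ once $P$ is $|\cdot|$-adically close enough to $A$. My plan is to realize both fibers as quotients $K(P)[X]/g_P(X)$ and $K(A)[X]/g_A(X) = K[X]/g_A(X)$ for suitable monic polynomials, and then deduce the isomorphism from the Separable Hensel Lemma (Lemma \ref{lkras}) together with continuity of the coefficients, which in turn will be controlled by the Puiseux estimate (Lemma \ref{lpuiseux}).

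First I would choose a primitive element for $\psi$: since $\psi$ is finite of some degree $\delta$, there is a function $x \in \bar K(\tilCC)$ generating $\bar K(\tilCC)$ over $\bar K(\CC)$, and after replacing $\CC$ by an open set we may assume $x$ is regular on $\psi^{-1}(A)$ and that its minimal polynomial over $K(\CC)$, call it
$$
g(X) = X^{\delta} + c_{\delta-1}X^{\delta-1} + \cdots + c_0, \qquad c_i \in K(\CC),
$$
has all its coefficients regular at $A$ — this uses that $\psi$ is unramified (hence flat, and the fiber is étale) above $A$, so $\psi^{-1}(A)$ is a disjoint union of copies of $\Spec K$ and $x$ separates these points. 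For $P$ with $|t(P)|$ small the $c_i$ are then defined at $P$, and $\psi^{-1}(P) \simeq K(P)[X]/g_P(X)$ with $g_P(X) = X^\delta + c_{\delta-1}(P)X^{\delta-1}+\cdots+c_0(P)$; likewise $\psi^{-1}(A) \simeq K[X]/g_A(X)$ with $g_A$ obtained by specializing the $c_i$ at $A$. Since $\psi$ is unramified above $A$, the polynomial $g_A$ is separable — it has $\delta$ distinct roots in $\bar K$.

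Next, each coefficient $c_i$ is a rational function on $\CC$ with no pole at $A$, so Lemma \ref{lpuiseux} gives $c_i(P) = c_i(A) + O(|t(P)|^{1/N})$; in particular $g_P \to g_A$ coefficient-wise (in the $\ell_\infty$-norm) as $|t(P)| \to 0$. Now apply Lemma \ref{lkras} to the separable polynomial $f = g_A$: it yields an $\eps_0 > 0$ such that any monic $f^*$ of degree $\delta$ within $\eps_0$ of $g_A$ and with coefficients in $K$ satisfies $K[X]/f^*(X) \simeq K[X]/g_A(X)$. There is one subtlety: $g_P$ has coefficients in $K(P)$, not in $K$, so Lemma \ref{lkras} does not apply directly over the base field $K$. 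I would handle this by first observing that $t(P) \in K$ whenever we are in the relevant situation (the points supplied by Theorem \ref{thany} satisfy $t(P)\in K$; more intrinsically one should state the Main Lemma for such $P$, or note $K(P)$ is itself a complete field to which the lemma applies with base field $K(P)$ and approximating polynomial the image of $g_A$ under $K \hookrightarrow K(P)$). Applying Lemma \ref{lkras} over $K(P)$ with $f$ the image of $g_A$ and $f^* = g_P$ gives $K(P)[X]/g_P(X) \simeq K(P)[X]/g_A(X) = (K[X]/g_A(X))\otimes_K K(P) \simeq \psi^{-1}(A)\otimes_K K(P)$, which is exactly the claim, with $\eps$ the threshold making $\|g_P - g_A\|_\infty < \eps_0$.

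The main obstacle I anticipate is the bookkeeping around the primitive element and the base field over which Hensel's lemma is invoked: ensuring that a single $x$ can be chosen regular on the whole fiber $\psi^{-1}(A)$ and defined over $K$ (not merely $\bar K$), so that the minimal polynomial $g$ has coefficients in $K(\CC)$ and specializes cleanly; and making sure the isomorphism produced by Lemma \ref{lkras}, which a priori only matches root fields $K(\alpha_i) = K(\alpha_i^*)$, genuinely upgrades to an isomorphism of the full $K(P)$-algebras $\psi^{-1}(P) \simeq \psi^{-1}(A)\otimes_K K(P)$ — this is where separability of $g_A$ (equivalently, unramifiedness of $\psi$ above $A$) is essential, since it forces both algebras to be products of fields with matching factors. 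Everything else is routine continuity and the already-established local lemmas.
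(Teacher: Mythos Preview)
Your plan is essentially the paper's: choose a primitive element for $K(\tilCC)/K(\CC)$ regular on the fiber over~$A$, realize the fibers as $K(P)[X]/g_P$ and $K[X]/g_A$, use the Puiseux estimate (Lemma~\ref{lpuiseux}) to show $g_P\to g_A$ coefficientwise, and then invoke the separable Hensel lemma (Lemma~\ref{lkras}) over the complete field $K(P)$.

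There is, however, one genuine slip in your justification, and it sits exactly at the obstacle you flag. You assert that unramifiedness of~$\psi$ above~$A$ makes $\psi^{-1}(A)$ ``a disjoint union of copies of $\Spec K$''. That is false: unramifiedness only yields that $\psi^{-1}(A)=\Spec\bigl(\prod_Q K(Q)\bigr)$ is an \'etale $K$-algebra, with the $K(Q)$ possibly nontrivial separable extensions of~$K$. (The stronger hypothesis $K(\psi^{-1}(A))=K$ enters only in Theorem~\ref{thany}, not in the Main Lemma.) This matters because an arbitrary primitive element~$x$ regular on $\psi^{-1}(A)$ need \emph{not} have separable specialization~$g_A$: some $x(Q)$ may fail to generate $K(Q)/K$, or two values $x(Q)$ may coincide, and then $g_A$ acquires a repeated factor and Lemma~\ref{lkras} does not apply. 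The paper resolves this by building the primitive element from the fiber up: for each closed point $Q\in\psi^{-1}(A)$ choose $\beta_Q\in K(Q)$ so that the lcm of their minimal polynomials over~$K$ has degree exactly $\deg\psi$, lift the tuple $(\beta_Q)$ to a single $y\in\widetilde{R_0}$ via the Chinese Remainder Theorem, and then observe by a degree count that~$y$ is automatically primitive and~$f_A$ separable; after inverting $\Disc(f)$ one gets $\psi^{-1}(U)=\Spec(R[Y]/f)$ on the nose, and the rest proceeds exactly as you outline.
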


\begin{proof}
Let $U_0=\Spec(R_0)$ be an affine open subset of $\CC$ containing $A$. We note that $R_0$ is a Dedekind ring with fraction field $K(\CC)$. Moreover, $V_0:=\psi^{-1}(U_0)=\Spec(\widetilde{R_0})$, where $\widetilde{R_0}$ is the normalization of $R_0$ in $K(\tilCC)$.

If $Q$ is a closed point of $V_0$, we denote by $\mathfrak{M}_Q$ the corresponding maximal ideal of $\widetilde{R_0}$. In particular, the residue field $\widetilde{R_0}/\mathfrak{M}_Q$ is the field $K(Q)$.

The map $\psi$ being unramified above $A$, we have:
$$
\sum_{Q\in \psi^{-1}(A)} [K(Q):K] = \deg(\psi)
$$
where the sum runs through closed points of $\psi^{-1}(A)$.

Let us choose in each $K(Q)$ an element $\beta_Q \in K(Q)$ such that the l.c.m. of their minimal polynomials over $K$ has degree $\deg(\psi)$. This can be acheived according to the equality above.

According to the Chinese remainder theorem, the map
$$
\widetilde{R_0} \to \prod_{Q\in \psi^{-1}(A)} K(Q)
$$
is surjective. Therefore, there exists a function $y\in \widetilde{R_0}$ such that $y\equiv\beta_Q\pmod{\mathfrak{M}_Q}$ for all $Q$. This means that $y$ takes the value $\beta_Q$ at $Q$.

We claim that $K(\tilCC)=K(\CC)(y)$. Let us observe first that $y$ belongs to $\widetilde{R_0}$ which is a finite $R_0$-algebra, hence $y$ is integral over $R_0$. We let ${f\in R_0[Y]}$ be the (monic) minimal polynomial of~$y$ over $R_0$.

For any closed point $P\in U_0$, we let $f_P$ be the image of $f$ by the reduction map $R_0[Y]\to K(P)[Y]$. In classical language, $f_P$ is the polynomial obtained by specializing the coefficients of $f$ at the point $P$.

By definition, $f(y)=0$ holds true in the ring $\widetilde{R_0}$, therefore it holds true after reduction modulo any ideal of $\widetilde{R_0}$. By construction of $y$, we know that $y\equiv\beta_Q\pmod{\mathfrak{M}_Q}$ for all $Q\in \psi^{-1}(A)$. Therefore, $f_A(\beta_Q)=0$ holds true for all $Q\in \psi^{-1}(A)$. In other words, the values of $y$ at points from $\psi^{-1}(A)$ are roots of $f_A$.

It follows that the l.c.m. of the minimal polynomials of the $\beta_Q$ divides $f_A$, hence $f_A$ (and therefore $f$) has degree at least equal to $\deg(\psi)$. But we already know that the degree of $f$ is at most $[K(\tilCC):K(\CC)]$, because $K(\CC)(y)$ is a subfield of $K(\tilCC)$. Hence the equality holds, and $y$ generates $K(\tilCC)$ over $K(\CC)$.

We can resume the situation by the following commutative diagram
$$
\begin{CD}
V_0=\psi^{-1}(U_0) @>\psi>> U_0 \\
@VVV @| \\
\Spec(R_0[Y]/f) @>>> \Spec(R_0) \\
\end{CD}
$$
in which the vertical map on the left is generically an isomorphism. In fact, the affine curve $V_0$ being normal, it is equal to the normalization of $\Spec(R_0[Y]/f)$.

Let $\Disc(f)$ be the discriminant of the polynomial $f$, and let $R:=R_0[\Disc(f)^{-1}]$. Thus, $U:=\Spec(R)$ is the largest open subset of $U_0$ over which $\Disc(f)$ is invertible. We note that $U$ contains $A$, because $f_A$ has no double root by construction. By standard algebra, $R[Y]/f$ is an {\'e}tale $R$-algebra. The ring $R$ being regular, it follows that $R[Y]/f$ is regular, hence normal. So, the map $\psi^{-1}(U)\to \Spec(R[Y]/f)$ is an isomorphism.

Therefore, given any $P\in U(\bar{K})$, we have
$$
\psi^{-1}(P)=\Spec(K(P)[Y]/f_P).
$$
According to Lemma~\ref{lpuiseux}, when $|t(P)|$ is sufficiently small, the coefficients of $f_P$ are sufficiently close to the coefficients of $f_A$. Hence, according to Lemma~\ref{lkras}, if $|t(P)|$ is sufficiently small, then
$$
K(P)[Y]/f_P \simeq K(P)[Y]/f_A \simeq (K[Y]/f_A)\otimes_K K(P)
$$
i.e.
$$
\psi^{-1}(P) \simeq \psi^{-1}(A)\otimes_K K(P)
$$
hence the result.
\end{proof}

\begin{remark}
\label{rmain}
The following more general version of the ``Main Lemma'' can be proved similarly. Let  ${\psi:\tilCC\to \CC}$ be a finite morphism of curves over~$K$.  Assume that there exists a non-constant function ${t\in K(\CC)}$  such that:
\begin{enumerate}
\item[i)] the zeroes of $t$ are $K$-rational points $A_1,\dots A_r$;
\item[ii)] the morphism~$\psi$ is unramified above the $A_i$;
\item[iii)] for any $i,j$, we have an isomorphism $\psi^{-1}(A_i)\simeq \psi^{-1}(A_j)$.
\end{enumerate}
Then a similar conclusion to that of Lemma~\ref{lmain} holds.
\end{remark}

%%%%%%%%%%%%%%%%%%%%%%%%%%%%%%%%%%%%%%%%%%%%%

\subsection{Proof of Theorem~\ref{thany}}
\label{sstany}

According to Theorem~\ref{thchw}, there exists a finite set~$S$ of places of~$K$  such that for every ${P\in \CC(\bar{K})}$ and ${\tilP\in \psi^{-1}(P)}$  the number field extension ${K(\tilP)/K(P)}$ is unramified outside places above~$S$. This is equivalent to saying that the extension  $K(\psi^{-1}(P))/K(P)$ is unramified outside places above $S$.

Now fix a place ${v\in S\cup M_K^{\infty}}$. Applying Lemma~\ref{lmain} over~$K_v$, we find that there exists ${\eps_v>0}$ such that, for every ${P\in \CC(\bar{K_v})}$ with $t(P)\in K$ and ${|t(P)|_v<\eps_v}$, we have, for any place $w$ of $K(P)$ lying above $v$,
$$
\psi^{-1}(P)\otimes_{K(P)} K(P)_w \simeq \psi^{-1}(A)\otimes_K K(P)_w.
$$
In particular, these finite varieties have the same function fields. But by assumption we have $K(\psi^{-1}(A))=K$, hence this yields $K(P)_w(\psi^{-1}(P)) =  K(P)_w$, in other words $\psi^{-1}(P)$ has all its points defined over $K(P)_w$. This means that $w$ is totally split in $K(\psi^{-1}(P))$. In particular, the extension $K(\psi^{-1}(P))/K(P)$ is unramified at $w$.

We complete the proof setting ${\eps=\min\{\eps_v: v\in S\cup M_K^{\infty}\}}$.

\begin{remark}
Replacing Lemma~\ref{lmain}  by Remark~\ref{rmain}, one can prove
a more general statement. In the set-up of  Theorem~\ref{thchw} assume that there exists  ${t\in K(\CC)}$ with the following property:  for any point~$A\in\CC(\bar{K})$ with ${t(A)=0}$ we have ${K(A)=K(\psi^{-1}(A))=K}$. Then the conclusion of Theorem~\ref{thany} holds.
\end{remark}

%%%%%%%%%%%%%%%%%%%%%%%%%%%%%%%%%%%%%%%%%%%%%

%%%%%%%%%%%%%%%%%%%%%%%%%%%%%%%%%%%%%%%%%%%%%

\section{Counting Number Fields in Fibers: the Theorem of Dvornicich \& Zannier}
\label{s_DZ}

%%%%%%%%%%%%%%%%%%%%%%%%%%%%%%%%%%%%%%%%%%%%%

Unless the contrary is stated explicitly, everywhere in this section
\begin{itemize}
\item
$K$ is a number field of degree~$\ell$ over~$\Q$,
\item
$\CC$ is a (smooth geometrically irreducible projective) curve over~$K$,
\item
${t:\CC\to\P^1}$ is a finite $K$-morphism of degree~$d$.
\end{itemize}
According to the Hilbert Irreducibility Theorem (see Subsection~\ref{sshilb}), for ``most'' ${\alpha\in K}$ the fiber ${t^{-1}(\alpha)}$ is $K$-irreducible. For our purposes we need a more precise statement: first, we have to consider not the entire field~$K$, but a proper subset, and second, we need to know that among the fields generated by the fibers there are ``many'' distinct.

We normalize the absolute values on number fields to extend the standard absolute values on~$\Q$: if ${v\mid \infty}$ then ${|2016|_v=2016}$, and if ${v\mid p<\infty}$ then ${|p|_v=p^{-1}}$. We denote by~$M_L$ the set of all absolute values on the number field~$L$ normalized as above, and by $M_L^\infty$ and $M_L^0$ the sets of infinite and of finite absolute values, respectively. 

We denote by $\Height(\alpha)$ the multiplicative absolute height of an algebraic number~$\alpha$: if~$L$ is a number field containing~$\alpha$ then
$$
\Height(\alpha) = \prod_{v\in M_L}\max\{1,|\alpha|_v \}^{[L_v:\Q_v]/[L:\Q]}. 
$$
We will use the standard properties of heights like 
\begin{equation}
\label{eprohe}
\Height(\alpha+\beta)\le 2\Height(\alpha)\Height(\beta), \quad 
\Height(\alpha\beta)\le \Height(\alpha)\Height(\beta), \quad
\Height(\alpha^n)= \Height(\alpha)^{|n|},
\end{equation}
etc.

\begin{theorem}
\label{thdvorzann}
Let~$S$ be a finite set of places of~$K$ and~$\eps$  a positive real number.
Further, let~$\mho$ be a thin subset of~$K$ (see Subsection~\ref{sshilb}).  Then there exist positive numbers ${c=c(K,\CC,t,S,\eps)}$ and ${B_0=B_0(K,\CC,t,S,\eps,\mho)}$ such that, for every ${B\ge B_0}$ the following holds. Consider  the points ${P\in \CC(\bar K)}$  satisfying
\begin{align}
\label{etink}
t(P)&\in K\smallsetminus \mho,\\
\label{eteps}
|t(P)|_v&<\eps \qquad (v\in S),\\
%\label{eheb}
\Height(t(P))&\le B. \nonumber
\end{align}
Then among the number fields $K(P)$, where~$P$ satisfies the conditions above, 
there are at least $cB^\ell/\log B$ distinct fields of degree~$d$ over~$K$. 
\end{theorem}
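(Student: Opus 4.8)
The plan is to count in two stages: first produce many rational values $\alpha = t(P) \in K$ satisfying the constraints \eqref{etink}, \eqref{eteps} and $\Height(\alpha) \le B$, avoiding the thin set $\mho$ and a further thin set that would make $t^{-1}(\alpha)$ reducible or would make the field $K(P)$ have degree less than $d$; then control how many of these $\alpha$ can give rise to the *same* number field $K(P)$, so that a positive proportion of the $\alpha$'s yield genuinely distinct fields.

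First I would fix a place $v_0 \in S$ (or, if $S = \emptyset$, an auxiliary archimedean place), and restrict attention to $\alpha$ of a special shape, e.g. $\alpha = \pi^k \beta$ or $\alpha$ inside a small $v$-adic ball around $0$ for each $v \in S$ simultaneously, chosen so that \eqref{eteps} holds automatically and so that the height is essentially governed by a single "free" parameter. Concretely, one can take $\alpha$ ranging over $\{\,a/N : a \in \OO_K,\ |a|_v < \eps N\ \forall v \in S\,\}$ for a fixed large integer $N$ depending on $S$ and $\eps$; the number of such $a$ with $\Height(\alpha) \le B$ is $\gg B^{\ell}$ by a lattice-point count (Minkowski / Schanuel-type estimate in $\OO_K \cong \Z^{\ell}$, the $v$-adic conditions cutting out a sublattice of bounded index and a box of volume $\asymp B^{\ell}$). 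Among these, by Hilbert Irreducibility (Subsection~\ref{sshilb}) the $\alpha$ for which $t^{-1}(\alpha)$ fails to be $K$-irreducible of degree $d$ — equivalently for which $K(P) \ne K(\text{full fiber})$ of degree $d$ — form a thin set, and we are also told to avoid $\mho$; a thin set, being a finite union of sets of the form $\phi(Y(K))$ with $\deg\phi \ge 2$ or with $Y$ of lower dimension, meets our box in $O(B^{\ell - 1/2}\cdot(\log B)^{?})$ points, hence is negligible. So $\gg B^{\ell}$ admissible values $\alpha$ remain, each giving a point $P$ with $[K(P):K] = d$.

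Next, the key point: bounding the multiplicity of the map $\alpha \mapsto K(P_\alpha)$. The obstruction here is the genuine one — a priori many different base points could generate isomorphic fields. I would argue as follows: if $K(P_\alpha) = K(P_{\alpha'}) =: L$, then $\alpha = t(P_\alpha)$ and $\alpha' = t(P_{\alpha'})$ both lie in $t(\CC(L))$. Now $\CC(L)$ has at most $C \cdot B^{\ell'/(2m(d-1))}$-type growth when counted by height — this is exactly the Dvornicich–Zannier-style estimate (the bound $\ll_{L} B^{1/(2m)}$, or with the stated exponent involving $m = \deg x$, $d-1 = $ something related to genus via Castelnuovo/Riemann–Roch) for the number of $L$-points on a curve of fixed gonality/degree data. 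Summing this bound over all candidate fields $L$ (of which there are, say, $T$) and comparing with the lower bound $\gg B^{\ell}$ for the total number of admissible $\alpha$, one gets $T \gg B^{\ell}/(\text{max fiber size}) \gg B^{\ell}/\log B$ once the per-field count is shown to be $O(B^{\ell}/(T\log B))$ — more carefully, one shows each field $L$ of degree $d$ over $K$ arises for $O(\log B)$ values of $\alpha$, using that $t(\CC(L))$ has bounded height-growth and that a fixed $\alpha \in K$ determines $P_\alpha$ up to the $d$ points of the fiber. The $\log B$ loss is the familiar one: it comes from passing between "number of fields" and "number of field-with-marked-generator", or from a dyadic decomposition of the height range. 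The genus/gonality input (the $m$ and $d-1$ in the exponent of Theorem~\ref{thquant}) enters precisely in this step via the sub-count of $L$-rational points, but for Theorem~\ref{thdvorzann} as stated we only need the crude consequence that each $L$ is hit $O(\log B)$ times.

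The main obstacle I anticipate is making the multiplicity bound uniform in $L$: the Dvornicich–Zannier point-count on $\CC(L)$ must have its implied constant depending only on $\CC$, $t$, $K$ (and $d$), not on $L$, which requires a uniform height-comparison between $\Height_L$ on $\CC(L)$ and $\Height(t(P))$ together with a uniform bound on the number of $L$-points of bounded height on a curve — this is where one invokes the quantitative Chevalley–Weil / gonality machinery referenced as ``following, mainly, Dvornicich and Zannier''. Once that uniformity is in hand, the pigeonhole/summation argument is routine and yields the claimed $cB^{\ell}/\log B$ with $c = c(K,\CC,t,S,\eps)$ and $B_0$ additionally depending on $\mho$ (to absorb the thin-set error term, whose implied constant is the only place $\mho$ enters).
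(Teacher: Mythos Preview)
Your approach diverges from the paper's in a fundamental way. The paper does \emph{not} prove the field-counting from scratch: it quotes Theorem~\ref{thdvorzannb} (Corollary~8.2 of~\cite{Bi16}) as a black box, which already gives ${cB^\ell/\log B}$ distinct fields among the $K(P)$ with ${t(P)\in\OO_K}$ and ${|t(P)|_v\le B}$ for archimedean~$v$. The paper then (i)~strips out the thin set~$\mho$ and the archimedean-small values via the easy estimates of Theorem~\ref{tthin} and Corollary~\ref{ccount} (this is Corollary~\ref{cdvorzannbb}), and (ii)~performs the coordinate change ${t^\ast=1/t+1/a}$ with a carefully chosen ${a\in\OO_K}$ so that the conditions ``$t^\ast(P)\in\OO_K$ and $|t^\ast(P)|_v\ge E$'' translate back into ``$|t(P)|_v<\eps$ for $v\in S$''. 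That substitution is the entire idea of the proof in Subsection~\ref{ssdvorzann}; the hard counting is outsourced.

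Your outline, by contrast, tries to redo the content of~\cite{Bi16}, and the crucial step---the multiplicity bound---does not work as you state it. The assertion that ``each field $L$ of degree~$d$ over~$K$ arises for $O(\log B)$ values of~$\alpha$'' is false in general: already for ${\CC=\P^1}$ and ${t=x^d}$, a fixed field ${L=K(\alpha_0^{1/d})}$ is hit by every ${\alpha\in\alpha_0(K^\times)^d}$, which is $\gg B^{\ell}$ values, not $O(\log B)$. Even in positive genus, the number of $L$-points on~$\CC$ of bounded height depends on~$L$ (the rank, in genus~$1$; Faltings' ineffective constant, in genus~$\ge 2$), so no uniform per-field bound of the shape you want is available. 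The actual Dvornicich--Zannier mechanism is different: one does not bound individual multiplicities, but observes that pairs $(\alpha,\alpha')$ with ${K(P_\alpha)\cong K(P_{\alpha'})}$ give integral points on a \emph{finite} family of auxiliary curves (indexed by the possible $K$-isomorphisms), and then invokes Siegel's theorem on those curves. Packaging this uniformly and quantitatively is exactly what~\cite{Bi16} does and what the paper cites rather than reproves.
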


This theorem is, essentially, due to Dvornicich and Zannier~\cite{DZ94}. In particular, the case ${K=\Q}$ (and arbitrary~$S$,~$\eps$) can be easily deduced from \cite[Theorem~2(a)]{DZ94}. For the general case we need a suitable generalization of the result of Dvornicich and Zannier, which can be found in~\cite{Bi16}, see Subsection~\ref{ssdvorzann}.

\begin{remark}
The estimate ${cB^\ell/\log B}$ is sufficient for us, but it is, probably, far from best possible. For instance, a result of Corvaja and Zannier \cite[Corollary~1]{CZ03} implies, for sufficiently large~$B$,  a lower bound of the form $B^\ell(\log B)^k$ with arbitrary ${k>0}$  provided~$t$ has at least~$3$ zeros in $\CC(\bar K)$. Using methods of article~\cite{Sc79}\footnote{Attention: in~\cite{Sc79} the height is normalized with respect to~$K$, and not with respect to~$\Q$, as in the present article.}, one can show that for sufficiently large~$B$ there are at least $c' B^{2\ell}$ numbers ${\alpha \in K}$ satisfying ${|\alpha|_v<\eps}$ for every ${v\in S}$, and ${\Height(\alpha)\le B}$; here ${c'>0}$ depends on~$K$,~$S$ and~$\eps$. Moreover, one can probably  even prove the asymptotics ${\gamma B^{2\ell}}$ (as ${B\to\infty}$) for the counting function of such numbers; here ${\gamma>0}$ depends  on~$K$,~$S$ and~$\eps$.  This suggests that a lower bound of the form $cB^{2\ell}/(\log B)^A$ must hold true with some ${A>0}$. 
\end{remark}

In Subsection~\ref{ssdiscrim} we estimate the discriminants of the fields emerging in Theorem~\ref{thdvorzann}.  Recall that, given a number field extension $L/K$, we define 
$$
\DD(L/K) =\bigl|\norm_{K/\Q}\Delta(L/K)\bigr|^{1/[K:\Q]},
$$
where $\Delta(L/K)$ is the  discriminant of~$L$ over~$K$.  %Recall that ${\ell=[K:\Q]}$. 
In Subsection~\ref{ssdiscrim} we estimate $\DD(K(P)/K)$, where~$P$ is as in Theorem~\ref{thdvorzann}; see Proposition~\ref{pdiscrim}.

Combining Theorem~\ref{thdvorzann} and Proposition~\ref{pdiscrim}, we obtain the following consequence.  

\begin{corollary}
In the set-up of Theorem~\ref{thdvorzann}, assume in addition that there exists a non-constant rational function ${x\in K(\CC)}$ of degree~$m$ such that
$$
{K(\CC)=K(t,x)}.
$$
Then there exist positive numbers
$$
{c=c(K,\CC,t,S,\eps)} \quad\text{and}\quad {X_0=X_0(K,\CC,t,S,\eps)}
$$
such that, for every ${X\ge X_0}$ there exist at least $cX^{\ell/(2m(d-1))}/\log X$ distinct fields~$L$ with  
$$
[L:K]=d, \qquad \DD(L/K)\le X, 
$$
and of the form ${L=K(P)}$, where~$P$ satisfies~\eqref{etink},~\eqref{eteps}. 
\end{corollary}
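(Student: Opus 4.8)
The plan is to derive the corollary from Theorem~\ref{thdvorzann} and Proposition~\ref{pdiscrim} by changing the counting variable from the height of $t(P)$ to the relative discriminant $\DD(K(P)/K)$. The geometric point behind Proposition~\ref{pdiscrim} is that, under the added hypothesis $K(\CC)=K(t,x)$, for a point $P$ with $t(P)\in K$ lying in a fibre of $t$ consisting of $d$ distinct points the value $x(P)$ generates $K(P)$ over $K$, its $K$-conjugates being exactly the values of $x$ on that fibre; this is what makes it possible to bound $\DD(K(P)/K)$ in terms of heights. I will use the conclusion of Proposition~\ref{pdiscrim} as a black box: for $P$ as in Theorem~\ref{thdvorzann} there is a constant $c_1=c_1(K,\CC,t,x)>0$ such that
$$
\DD(K(P)/K)\le c_1\,\Height(t(P))^{2m(d-1)}.
$$

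Given $X>0$, I would then set $B:=(X/c_1)^{1/(2m(d-1))}$. Any $P$ satisfying \eqref{etink}, \eqref{eteps} with $\Height(t(P))\le B$ and generating a field of degree $d$ over $K$ gives $L:=K(P)$ with $[L:K]=d$ and, by the estimate above, $\DD(L/K)\le c_1B^{2m(d-1)}=X$; thus each such $L$ is of the shape required by the corollary, and it remains only to count the distinct ones.

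Applying Theorem~\ref{thdvorzann} with the given $S$, $\eps$, $\mho$ and with this $B$, there are $c_2=c_2(K,\CC,t,S,\eps)>0$ and $B_0$ such that for $B\ge B_0$ --- equivalently $X\ge X_0:=c_1B_0^{2m(d-1)}$ --- among the $P$ satisfying \eqref{etink}, \eqref{eteps} and $\Height(t(P))\le B$ there are at least $c_2B^\ell/\log B$ distinct fields $K(P)$ of degree $d$ over $K$, each of which satisfies $\DD(\,\cdot\,/K)\le X$ by the previous paragraph. For $X$ sufficiently large one has $1<B$ and $\log B\le\log X$, so this quantity is at least
$$
c_2\,\frac{B^\ell}{\log X}\ =\ c_2\,c_1^{-\ell/(2m(d-1))}\,\frac{X^{\ell/(2m(d-1))}}{\log X},
$$
which is the asserted lower bound with $c:=c_2\,c_1^{-\ell/(2m(d-1))}>0$.

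In this argument the corollary amounts to little more than a substitution, so the only genuine obstacle lies in Proposition~\ref{pdiscrim}: one must bound the heights of the conjugate values $x(P_i)$ by a fixed power of $\Height(t(P))$ and, more delicately, identify the primes of $K$ that can ramify in $K(P)$ and control the exponents to which they divide $\Delta(K(P)/K)$, so that
$$
\bigl|\norm_{K/\Q}\Delta(K(P)/K)\bigr|\le\bigl(c_1\Height(t(P))^{2m(d-1)}\bigr)^{[K:\Q]}.
$$
That discriminant estimate --- not the elementary bookkeeping above --- is where I would expect the real work to lie.
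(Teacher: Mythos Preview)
Your argument is correct and is exactly the approach the paper intends: the corollary is stated immediately after the sentence ``Combining Theorem~\ref{thdvorzann} and Proposition~\ref{pdiscrim}, we obtain the following consequence'' and no separate proof is given, while the analogous Theorem~\ref{thquant_ma} is proved by the very substitution $B=X^{1/(2m(d-1))}$ you use. Your only overhead is the explicit tracking of the constant $c_1$, which is harmless.
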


\subsection{Thin Subsets and Hilbert's Irreducibility Theorem}
\label{sshilb}

In this subsection we recall basic definitions and facts about thin sets, and state Hilbert's Irreducibility Theorem.

Let~$K$ be a field of characteristic~$0$.  We call  ${\mho \subset K}$  a \textsl{basic thin subset} of~$K$ if there exists a (smooth geometrically irreducible) curve~$\CC$ defined over~$K$ and a non-constant rational function ${u\in K(\CC)}$ of degree at least~$2$  such that ${\mho \subset u(\CC(K))}$. A \textsl{thin subset} of~$K$ is a union of finitely many basic thin subsets. Thin subsets form an ideal in the algebra of subsets of~$K$. Serre in \cite[Section~9.1]{Se97} gives a differently looking, but equivalent definition of thin sets.

Any finite set is thin, and if~$K$ is algebraically closed then any subset of~$K$ is thin. 

\begin{remark}
\label{rthin}
If~$L$ is an extension of~$K$ then any thin subset of~$K$ is also thin as a subset of~$L$. The converse is true when~$L$ is finitely generated over~$K$ \cite[Proposition~2.1]{BL05} but not in general; for instance, any number field~$K$ is a thin subset of its algebraic closure~$\bar K$ but is not a thin subset of itself by the Hilbert Irreducibility Theorem quoted below. 
\end{remark}

Using elementary Galois theory one easily proves the following (see \cite[Section~9.2]{Se97})

\begin{proposition}
\label{pgalo}
Let~$\CC$ be a curve over~$K$ and ${t\in K(\CC)}$ a non-constant rational function. Then the set of ${\alpha\in K}$ such that the fiber ${t^{-1}(\alpha)}$ is reducible over~$K$ is thin. 
\end{proposition}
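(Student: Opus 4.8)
The plan is to pass to the Galois closure of $K(\CC)$ over $K(t)$, to read off how a fibre splits in terms of the associated decomposition group, and then to feed the outcome back into the definition of a thin set; the one delicate point is keeping careful track of constant-field extensions, so that the auxiliary covers produced along the way remain \emph{geometrically} irreducible, as that definition demands. We may assume $d:=\deg t\ge 2$, since for $d=1$ the map $t$ is an isomorphism and no fibre is reducible. Being in characteristic~$0$, let $M$ denote the Galois closure of $K(\CC)/K(t)$, put $G=\Gal(M/K(t))$, and let $H\le G$ be the subgroup with $M^{H}=K(\CC)$, so $[G:H]=d$. Let $L$ be the algebraic closure of $K$ inside $M$; then $L/K$ is finite Galois, the subgroup $\widetilde{G}:=\Gal(M/L(t))$ is normal in $G$ with $M^{\widetilde{G}}=L(t)$ and $G/\widetilde{G}\cong\Gal(L/K)$. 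For any subgroup $G'\le G$ the field of constants of $M^{G'}$ is $M^{G'}\cap L=M^{G'\widetilde{G}}$, which equals $K$ exactly when $G'\widetilde{G}=G$, i.e.\ when $G'$ surjects onto $G/\widetilde{G}$; when this holds, $M^{G'}$ is the function field of a geometrically irreducible curve $\CC_{G'}$ over $K$, and the inclusion $K(t)\subseteq M^{G'}$ yields a finite $K$-morphism $t_{G'}\colon\CC_{G'}\to\P^1$ of degree $[G:G']$.

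Next comes the standard dictionary. Only finitely many $\alpha\in K$ are such that $t-\alpha$ ramifies in $M$; this finite set is thin, and we discard it. For any remaining $\alpha$, pick a place $w$ of $M$ above $t-\alpha$ and let $D=D_{w}\le G$ be its decomposition group. Since $t-\alpha$ is unramified in $M$, the residue extension at $w$ is Galois with group $D$; its residue field contains $L$ and lies above the residue field $K$ of $t-\alpha$, so $D$ surjects onto $\Gal(L/K)$, that is $D\widetilde{G}=G$. On the other hand the fibre $t^{-1}(\alpha)$ is {\'e}tale over $K$, and its closed points are in bijection with the double cosets in $H\backslash G/D$; hence $t^{-1}(\alpha)$ is irreducible over $K$ if and only if $|H\backslash G/D|=1$, i.e.\ $HD=G$. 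Consequently, if $t^{-1}(\alpha)$ is \emph{reducible} over $K$ then $HD\ne G$, which in particular forces $D\ne G$: thus $D$ is a proper subgroup of $G$ satisfying $D\widetilde{G}=G$.

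It remains to extract the cover. For such an $\alpha$, apply the first paragraph with $G'=D$: the curve $\CC_{D}$ is geometrically irreducible over $K$, the morphism $t_{D}\colon\CC_{D}\to\P^1$ has degree $[G:D]\ge 2$, and the restriction $w'$ of $w$ to $M^{D}$ is a place above $t-\alpha$ whose residue field is $K$ (a short decomposition-group computation: $w$ is the unique place of $M$ above $w'$ and is inert over it). Thus $w'$ is a $K$-rational point of $\CC_{D}$ mapping to $\alpha$, so $\alpha$ is a value at a $K$-point of the degree-$[G:D]$ rational function underlying $t_{D}$. Since $G$ is finite it has only finitely many subgroups, and letting $\alpha$ range over all reducible fibres we conclude that
$$
\{\alpha\in K:\ t^{-1}(\alpha)\ \text{reducible over}\ K\}\ \subseteq\ (\text{a finite set})\ \cup\ \bigcup_{G'} t_{G'}\bigl(\CC_{G'}(K)\bigr),
$$
where $G'$ ranges over the finitely many proper subgroups of $G$ with $G'\widetilde{G}=G$. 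Each $t_{G'}(\CC_{G'}(K))$ is a basic thin subset of $K$ (as $\deg t_{G'}\ge 2$ and $\CC_{G'}$ is geometrically irreducible over $K$), and a finite set is thin; since thin sets form an ideal, the set in question is thin.

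The one step requiring genuine care is precisely the constant-field bookkeeping built into the first paragraph: one must verify that the intermediate cover attached to the decomposition group $D$ is geometrically irreducible over $K$, i.e.\ that $D\widetilde{G}=G$, which is exactly what the surjectivity of $D$ onto $\Gal(L/K)$ provides (and which genuinely can fail for other subgroups, as the example $t=x^{3}$ over $K=\Q$ illustrates). The remaining ingredients — the correspondence between closed points of a fibre and double cosets, and the finiteness of the ramified places — are routine Galois theory of one-variable function fields.
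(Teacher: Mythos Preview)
Your proof is correct and follows exactly the standard Galois-theoretic argument that the paper defers to via its citation of \cite[Section~9.2]{Se97}; the paper itself supplies no proof beyond that reference. Your careful tracking of the constant field~$L$ (ensuring $D\widetilde{G}=G$ so that the auxiliary curve $\CC_{D}$ is genuinely geometrically irreducible, as the paper's definition of basic thin set requires) is precisely the point Serre handles and is well done.
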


Hilbert's Irreducibility Theorem asserts that when~$K$ is a number field then its ring of integers $\OO_K$ is not a thin subset of~$K$. In fact, one has the following counting result (see~\cite{Se97}, Theorem on page~134). 

\begin{theorem}
\label{tthin}
Let~$K$ be a number field of degree~$\ell$ over~$\Q$ and~$\mho$ a thin subset of~$K$. Then for ${B\ge 1}$ the set ${\mho\cap \OO_K}$ has at most $O(B^{\ell/2})$ elements~$\alpha$ satisfying ${|\alpha|_v\le B}$ for every ${v\in M_K^\infty}$; the implicit constant depends  on~$K$ and on~$\mho$. 
\end{theorem}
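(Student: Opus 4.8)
The plan is to reinterpret the members of ${\mho\cap\OO_K}$ as $\OO_K$-integral points on an affine curve and to invoke Siegel's theorem, following the classical line of argument (cf.\ \cite{Se97}). Since thin subsets of~$K$ form an ideal and a finite union of $O(B^{\ell/2})$-sets is again of that size, I would first reduce to the case of a \emph{basic} thin set: ${\mho\subseteq u(\CC(K))}$ for a (smooth geometrically irreducible) curve~$\CC$ over~$K$ and a function ${u\in K(\CC)}$ of degree ${d\ge 2}$, viewed as a finite morphism ${u\colon\CC\to\PP^1}$. Let ${Q_1,\dots,Q_k}$ (with ${k\ge 1}$) be the distinct poles of~$u$ on~$\CC$, and set ${\CC^{\circ}=\CC\smallsetminus\{Q_1,\dots,Q_k\}}$.

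The point of this reformulation is the following. If ${\alpha\in\mho\cap\OO_K}$ satisfies ${|\alpha|_v\le B}$ for all ${v\mid\infty}$, choose ${P\in\CC(K)}$ with ${u(P)=\alpha}$; then ${u(P)=\alpha\in\OO_K}$ forces~$P$ to reduce outside the polar locus of~$u$ at every finite place, i.e.\ (suppressing the routine bookkeeping at the finitely many primes of bad reduction) ${P\in\CC^{\circ}(\OO_K)}$, while also ${|u(P)|_v\le B}$ for ${v\mid\infty}$ and ${|u(P)|_v\le 1}$ for every finite~$v$. Since distinct~$\alpha$'s give distinct~$P$'s, the quantity to bound is at most
$$
\#\bigl\{P\in\CC^{\circ}(\OO_K):|u(P)|_v\le B\text{ for all }v\mid\infty\bigr\}.
$$
I would then split according to the Euler characteristic ${\chi(\CC^{\circ})=2-2\genus(\CC)-k}$. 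If ${\chi(\CC^{\circ})<0}$ --- i.e.\ in all cases except ${(\genus(\CC),k)=(0,1)}$ and ${(0,2)}$ --- the curve~$\CC^{\circ}$ is hyperbolic, so by Siegel's theorem it has only finitely many $\OO_K$-integral points, and the contribution is ${O(1)}$. If ${\chi(\CC^{\circ})=0}$, i.e.\ ${\genus(\CC)=0}$ and ${k=2}$, then~$\CC^{\circ}$ is a form of~$\mathbb G_m$; since~$u$ has a pole at each of the two punctures, the conditions above force~$P$, regarded inside the finitely generated group ${\CC^{\circ}(\OO_K)}$ (of rank ${O(1)}$), into a box of side ${\asymp\log B}$ for the logarithmic embedding, so the contribution is ${O\bigl((\log B)^{O(1)}\bigr)}$. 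Finally, if ${\chi(\CC^{\circ})>0}$, i.e.\ ${\genus(\CC)=0}$ and ${k=1}$, the unique pole of~$u$ is a $K$-rational point, so a change of coordinate turns~$u$ into a polynomial ${u(s)\in K[s]}$ of degree~$d$; then ${\alpha=u(\beta)}$ with ${\beta\in K}$ forces, after multiplication by a fixed element, an algebraic integer ${\gamma\in\OO_K}$ with ${|\gamma|_v\ll B^{1/d}}$ for all ${v\mid\infty}$, and there are ${\ll B^{\ell/d}\le B^{\ell/2}}$ such~$\gamma$, hence that many~$\alpha$. Adding the three contributions gives the bound ${O(B^{\ell/2})}$, the ``polynomial'' case ${(0,1)}$ being the only one that attains the full exponent.

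The step I expect to be the main obstacle is the hyperbolic case, which rests on Siegel's theorem on integral points on curves --- the one genuinely deep ingredient; once it is granted, finiteness (hence ${O(1)}$) is immediate, and no effective or uniform version is needed. Everything else is elementary: the reduction to basic thin sets, the dictionary between ``${u(P)\in\OO_K}$'' and ``${P\in\CC^{\circ}(\OO_K)}$'' (whose only subtlety is the bad-reduction bookkeeping, which does not affect any estimate), the $S$-unit count in the ${\mathbb G_m}$-case, and the box count in the polynomial case.
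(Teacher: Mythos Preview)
The paper does not give its own proof of this statement; it simply cites Serre~\cite[p.~134]{Se97}. Serre's argument there (carried out in his \S13.1) is via the \emph{large sieve}, not the one you sketch: for a basic thin set arising from a degree-$d$ cover ${u:\CC\to\PP^1}$, one shows from the Weil bounds that for almost all primes~$\gerp$ of~$K$ the reduction modulo~$\gerp$ omits at least a proportion~$1/d$ of the residue classes, and then the large sieve inequality over~$\OO_K$ yields the bound ${O(B^{\ell/2})}$.

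Your approach---reinterpreting the problem as counting $\OO_{K,S}$-integral points on the affine curve~$\CC^\circ$ and splitting on its Euler characteristic, with Siegel's theorem disposing of the hyperbolic case---is correct and is a genuinely different route. The polynomial case ${(\genus,k)=(0,1)}$ is indeed the only one contributing the full exponent, and your treatment of all three cases is sound (in the ${(\genus,k)=(0,2)}$ case one should note that if~$\CC$ has no $K$-point the thin set is empty, and otherwise ${\CC\cong\PP^1_K}$ and~$\CC^\circ$ is a one-dimensional torus as you say). The trade-off: Serre's sieve proof is effective, avoids Siegel (hence Roth/diophantine approximation), and extends uniformly to thin subsets of affine $n$-space; your argument is more geometric and makes the origin of the exponent---namely ${\ell/d}$ with ${d\ge2}$ in the dominant case---completely transparent. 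Your parenthetical ``cf.~\cite{Se97}'' is therefore a little misleading, since the argument found there is the sieve one rather than yours.
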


%%%%%%%%%%%%%%%%%%%%%%%%%%%%%%%%%%%%%%%%%%%%%

\subsection{Counting Algebraic Integers}

To prove Theorem~\ref{thdvorzann} we need to count algebraic integers in the number field~$K$ whose conjugates are bounded by given quantities. We denote by~$s_1$ and~$s_2$ the number of real and of complex infinite places of~$K$, so that ${\ell=s_1+2s_2}$ and ${|M_K^\infty|=s_1+s_2}$. We also denote by~$D_K$ the  discriminant of~$K$ over~$\Q$. 

\begin{proposition}
\label{pcountv}
For every ${v\in M_K^\infty}$ pick ${B_v\ge 1}$.  Then the total number of ${\alpha\in \OO_K}$ satisfying 
${|\alpha|_v\le B_v}$ for ${v\in M_K^\infty}$ is 
$$
\frac{2^{s_1+s_2}\pi^{s_2}\prod_vB_v^{e_v}}{|D_K|^{1/2}}\left(1+O\left(\frac1{\min\{B_v:v\in M_K^\infty\}}
\right)\right),
$$
where ${e_v=1}$ if~$v$ is real, ${e_v=2}$  if~$v$ is complex, and the implicit constant depends only on~$K$.
\end{proposition}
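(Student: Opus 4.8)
The plan is to recognize this as a lattice-point count in a symmetric convex region and apply a standard counting estimate. First I would use the Minkowski embedding $\sigma\colon K\hookrightarrow \R^{s_1}\times\C^{s_2}\cong\R^{\ell}$, under which $\OO_K$ maps to a full-rank lattice of covolume $2^{-s_2}|D_K|^{1/2}$. The condition $|\alpha|_v\le B_v$ for all ${v\in M_K^\infty}$ translates into membership of $\sigma(\alpha)$ in the box-like region $\prod_{v\ \mathrm{real}}[-B_v,B_v]\times\prod_{v\ \mathrm{complex}}\{z\in\C:|z|\le B_v\}$, whose Euclidean volume is $2^{s_1}\pi^{s_2}\prod_v B_v^{e_v}$ (with the convention that a complex place contributes a disc of radius $B_v$, hence area $\pi B_v^2$). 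Dividing the region volume by the lattice covolume gives the main term $2^{s_1+s_2}\pi^{s_2}\prod_v B_v^{e_v}/|D_K|^{1/2}$, which matches the claim.

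Next I would control the error term. The general principle (a quantitative Lipschitz-boundary version of the lattice-point counting theorem, e.g.\ as in Lang's \emph{Algebraic Number Theory} or Widmer's refinements) states that for a lattice $\Lambda\subset\R^\ell$ of covolume $\Delta$ and a region $\mathcal{R}$ whose boundary is parametrizable by finitely many Lipschitz maps, the number of lattice points in $\mathcal{R}$ equals $\Vol(\mathcal{R})/\Delta$ up to an error $O\bigl(\max\{1,\Vol(\partial\mathcal{R}\text{-collar})\}\bigr)$, which in our homogeneously-scaled situation is $O\bigl(\prod_v B_v^{e_v}\big/\min_v B_v\bigr)$ since shrinking any one coordinate direction by a factor $1/B_{v_0}$ bounds the surface-area contribution. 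Relative to the main term $\asymp \prod_v B_v^{e_v}$, this is precisely a multiplicative error of size $O(1/\min\{B_v:v\in M_K^\infty\})$, with implicit constant depending on the fixed lattice $\sigma(\OO_K)$ and on $\ell$, hence only on~$K$. One subtlety worth noting: the regions here are not dilates of a \emph{single} fixed body but anisotropic rescalings (each coordinate scaled independently), so I would either invoke a counting estimate that already allows independent scaling of coordinates, or first apply the diagonal linear map rescaling each axis by $1/B_v$ — which sends $\sigma(\OO_K)$ to a lattice of covolume $2^{-s_2}|D_K|^{1/2}/\prod_v B_v^{e_v}$ and sends the region to the fixed unit box-times-discs — and then apply the homogeneous version.

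The main obstacle is getting the error term uniform with the stated shape $O(1/\min B_v)$: a naive application of lattice-point counting with a crude box would give an error involving $\sum_v \prod_{w\ne v}B_w^{e_w}$ times constants, and one must check that this is dominated by $\prod_v B_v^{e_v}/\min_v B_v$, which it is since each summand omits exactly one factor $B_v^{e_v}\ge B_v\ge\min_v B_v$. The only genuine care needed is that the implicit constant not secretly depend on the $B_v$ through the Lipschitz constants of the boundary parametrization — but after the diagonalizing rescaling the boundary is that of a fixed region (independent of the $B_v$), so its Lipschitz constants, and the resulting implied constant, depend only on $\ell$ and on the successive minima of $\sigma(\OO_K)$, i.e.\ only on $K$. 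I would also remark that the hypothesis $B_v\ge 1$ is used only to ensure the region contains a neighborhood of the origin of bounded-below size, so that the "$\max\{1,\cdots\}$" in the error simplifies and the estimate is non-vacuous.
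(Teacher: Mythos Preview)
Your proposal is correct and follows essentially the same approach as the paper: embed $\OO_K$ as a lattice in $\R^{s_1}\times\C^{s_2}$ of covolume $2^{-s_2}|D_K|^{1/2}$, compute the volume of the box-times-discs region, and apply a lattice-point counting estimate whose error is governed by $1/\min_v B_v$. The only difference is in the counting tool invoked: the paper proves a self-contained Geometry of Numbers lemma (Proposition~\ref{pgeo}) in which the error for any bounded symmetric convex body $\UU$ is $\Vol(\UU)\cdot O(1/\innr\UU)$, established via Davenport's projection bound, whereas you appeal to the Lipschitz-boundary version of the counting theorem (and/or a diagonal rescaling trick); both yield the same error shape here since $\innr\UU=\min_v B_v$.
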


This is, of course, well-known and classical, but we did not find exactly this statement in the literature. Therefore we add some details for the reader's convenience.

Let us recall some standard facts from the Geometry of Numbers. If~$\Gamma$ is a lattice in~$\R^\ell$ and ${\UU\subset\R^\ell}$ a bounded symmetric convex set, then ${|\UU\cap\Gamma|}$ must be approximated by $\frac{\Vol\UU}{\det\Gamma}$, where $\Vol$ denotes the standard Euclidean volume on~$\R^\ell$ and $\det\Gamma$ is the fundamental volume of~$\Gamma$. To make this precise, we define the \textsl{inner radius} of~$\UU$ as the minimal (Euclidean) distance  from  the boundary of~$\UU$ to the origin: 
$$
\innr\UU=\min\{\|x\|_2: x\in \partial U \}.
$$

\begin{proposition}
\label{pgeo}
Let~$\Gamma$ be a lattice in~$\R^d$. Then %there exists a positive real number~$\kappa$, depending only on~$\Gamma$, such that 
for any bounded symmetric convex set~$\UU$ we have 
$$
|\UU\cap\Gamma|=\Vol\UU\left(\frac1{\det\Gamma}+O\left(\frac1{\innr\UU}\right)\right),
$$
where the implied constant may depend on~$\Gamma$, but not on~$\UU$. 
\end{proposition}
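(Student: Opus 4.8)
The plan is to prove this by the classical cube‑packing argument of the Geometry of Numbers. First I would reduce to the case ${\Gamma=\Z^d}$: choosing ${T\in\mathrm{GL}_d(\R)}$ with ${T\Z^d=\Gamma}$ we have ${|\det T|=\det\Gamma}$, and replacing~$\UU$ by ${T^{-1}\UU}$ turns ${|\UU\cap\Gamma|}$ into ${|T^{-1}\UU\cap\Z^d|}$, multiplies ${\Vol\UU}$ by ${(\det\Gamma)^{-1}}$, changes ${\innr\UU}$ by a factor lying between ${\|T^{-1}\|^{-1}}$ and ${\|T\|}$ in operator norm (hence bounded in terms of~$\Gamma$ only), and preserves the class of bounded symmetric convex sets. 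So it is enough to prove that for bounded symmetric convex ${\UU\subseteq\R^d}$ with ${\Vol\UU>0}$ one has ${\bigl||\UU\cap\Z^d|-\Vol\UU\bigr|=O_d\!\left(\Vol\UU/\innr\UU\right)}$. Set ${\rho:=\innr\UU}$; by symmetry and convexity the origin is interior to~$\UU$ and the closed ball~$\rho B$ (with~$B$ the closed unit Euclidean ball) lies in~$\UU$. We may assume ${\rho\ge\sqrt d}$; for smaller~$\rho$ the desired bound follows from the crude inclusions below.

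Second, the packing sandwich. Attach to each ${n\in\Z^d}$ the half‑open cube ${Q_n=n+[-\tfrac12,\tfrac12)^d}$; these tile~$\R^d$ and have volume~$1$, so ${|\UU\cap\Z^d|=\Vol\bigl(\bigcup_{n\in\UU\cap\Z^d}Q_n\bigr)}$. Since each~$Q_n$ has diameter~$\sqrt d$, one has
$$
\UU^{-}\ \subseteq\ \bigcup_{n\in\UU\cap\Z^d}Q_n\ \subseteq\ \UU^{+},
$$
where ${\UU^{+}}$ is the closed ${\sqrt d}$‑neighbourhood of~$\UU$ and ${\UU^{-}=\{x:\ \|y-x\|_2\le\sqrt d\Rightarrow y\in\UU\}}$ its ${\sqrt d}$‑erosion (for the left inclusion: if ${x\in\UU^{-}\cap Q_n}$ then ${\|n-x\|_2\le\sqrt d}$ forces ${n\in\UU}$). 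Taking volumes, ${\Vol\UU^{-}\le|\UU\cap\Z^d|\le\Vol\UU^{+}}$, so it remains to bound ${\Vol\UU^{+}-\Vol\UU}$ and ${\Vol\UU-\Vol\UU^{-}}$ by ${O_d(\Vol\UU/\rho)}$.

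Third — and this is the only step that is not pure bookkeeping — comes the convexity estimate. Using ${\rho B\subseteq\UU}$ and convexity I would prove the dilation inclusions ${\UU^{+}\subseteq(1+\sqrt d/\rho)\UU}$ and ${(1-\sqrt d/\rho)\UU\subseteq\UU^{-}}$. For the first: if ${u\in\UU}$ and ${\|z\|_2\le\sqrt d}$, then ${\rho z/\sqrt d\in\rho B\subseteq\UU}$, and
$$
u+z=\Bigl(1+\tfrac{\sqrt d}{\rho}\Bigr)\!\left(\tfrac{\rho}{\rho+\sqrt d}\,u+\tfrac{\sqrt d}{\rho+\sqrt d}\cdot\tfrac{\rho z}{\sqrt d}\right)\in\Bigl(1+\tfrac{\sqrt d}{\rho}\Bigr)\UU,
$$
the bracket being a convex combination of two points of~$\UU$; the second inclusion is analogous and uses ${\rho\ge\sqrt d}$ (so that ${1-\sqrt d/\rho\ge0}$). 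Consequently ${\Vol\UU^{+}\le(1+\sqrt d/\rho)^d\Vol\UU}$ and ${\Vol\UU^{-}\ge(1-\sqrt d/\rho)^d\Vol\UU}$, and since ${(1\pm\sqrt d/\rho)^d=1+O_d(1/\rho)}$ for ${\rho\ge\sqrt d}$, both differences are ${O_d(\Vol\UU/\rho)}$, which finishes the proof. I expect the dilation inclusions — the one genuinely geometric input, where convexity is indispensable — to be the only real obstacle; the reduction to~$\Z^d$ and the packing sandwich are routine.
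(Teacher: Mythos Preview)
Your argument is correct and takes a genuinely different route from the paper's. Both proofs begin by reducing to ${\Gamma=\Z^d}$ (you via a linear change of coordinates, the paper by changing the inner product --- equivalent manoeuvres). After that they diverge. The paper invokes Davenport's classical estimate, which bounds ${\bigl||\UU\cap\Z^d|-\Vol\UU\bigr|}$ by a sum of volumes of projections of~$\UU$ onto all proper coordinate subspaces, and then controls each projection volume by a separate geometric lemma (Lemma~\ref{lproj} and Corollary~\ref{cproj}) comparing ${\Vol_\LL(\UU\cap\LL)\cdot\Vol_{\LL^\perp}(\pi_\LL\UU)}$ with $\Vol\UU$. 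Your route is entirely self-contained: the cube-packing sandwich together with the dilation inclusions ${\UU^+\subseteq(1+\sqrt d/\rho)\,\UU}$ and ${(1-\sqrt d/\rho)\,\UU\subseteq\UU^-}$, which you verify directly from convexity and the inscribed ball, replaces both Davenport's theorem and the projection lemma by a single elementary convexity computation. The paper's approach buys finer structure --- Davenport's projection bound is independently useful --- whereas yours buys simplicity and avoids external citations.

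One caveat: your throwaway line ``for smaller~$\rho$ the desired bound follows from the crude inclusions below'' is not actually correct --- take $\UU$ to be a ball of radius ${\rho\to0}$: then ${|\UU\cap\Z^d|=1}$ while ${\Vol\UU/\rho\to0}$, so the estimate fails outright in that regime. But this is a defect of the stated proposition rather than of your method: the paper's proof has the same tacit restriction (Corollary~\ref{cproj} yields bounds of the form ${\Vol\UU/(\innr\UU)^m}$ with ${m\ge1}$, which are dominated by ${\Vol\UU/\innr\UU}$ only when ${\innr\UU\ge1}$), and the proposition is only ever applied in the paper with all $B_v\ge1$, hence ${\innr\UU\ge1}$.
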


Proposition~\ref{pcountv} readily follows from this, by viewing~$\OO_K$ as a lattice in $\R^{s_1}\times\C^{s_2}$, of fundamental volume ${2^{-s_2}|D_K|^{1/2}}$. %The details are well-known, we leave them out. 

\bigskip

 %We view $\R^d$ as the Euclidean space with the standard inner product. 
Proving Proposition~\ref{pgeo} requires some preparation. The standard inner product on the Euclidean space~$\R^\ell$ induces an inner product on every subspace~$\LL$, and we denote by $\Vol_\LL$ the volume on~$\LL$  induced by this inner product. 

\begin{lemma}
\label{lproj}
Let~$\LL$ be a subspace of~$\R^\ell$ and denote by ${\pi_\LL:\R^\ell\to \LL^\perp}$ the orthogonal projection along~$\LL$. Then for any bounded symmetric convex set ${\UU\subset \R^d}$ we have
$$
\Vol_\LL(\UU\cap\LL)\Vol_{\LL^\perp}(\pi_\LL(\UU))\le \binom \ell m \Vol\UU,
$$
where ${m=\dim\LL}$. 
\end{lemma}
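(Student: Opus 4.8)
The plan is to prove Lemma~\ref{lproj} by comparing the volume of $\UU$ to the volume of a solid built from $\UU\cap\LL$ and $\pi_\LL(\UU)$, using the fact that $\UU$ contains a large ``prism'' over these two pieces. Write $n=\ell-m=\dim\LL^\perp$. Fix orthonormal bases of $\LL$ and $\LL^\perp$, so that $\R^\ell=\LL\oplus\LL^\perp$ isometrically and Lebesgue measure factors as $\Vol=\Vol_\LL\otimes\Vol_{\LL^\perp}$. The two objects on the left-hand side live in complementary subspaces, so their product equals $\Vol$ of the cylinder $(\UU\cap\LL)\times\pi_\LL(\UU)$. The task is thus to show that a suitably scaled copy of this cylinder embeds (up to the combinatorial constant) into $\UU$ itself.

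The key geometric step is the following: if $a\in\UU\cap\LL$ and $b\in\pi_\LL(\UU)$, then there is a point of $\UU$ of the form $a'+b$ with $a'\in\LL$ (because $b$ is the projection of some $u\in\UU$, and $u=a''+b$ with $a''\in\LL$), and by symmetry $-a''+b$ and $-a''-b$ and $a''-b$ all lie in $\UU$. Averaging $a=\tfrac12(a+0)$ type convex combinations of the available points, one finds that $\tfrac12 a + \tfrac12 b'$-combinations stay in $\UU$; more precisely the convex hull of $\{\pm a, u, -u\}$ (with $u=a''+b$) lies in $\UU$ and contains $\tfrac12(a-a'')+\tfrac12 b$ — iterating/averaging over the $a$ direction, one gets that $\tfrac1{?}\big((\UU\cap\LL)\times\pi_\LL(\UU)\big)$, suitably centered, sits inside $\UU$. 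The clean way to package this is to invoke the Rogers--Shephard-type inequality for sections and projections of convex bodies: for a symmetric convex body $\UU$ and any subspace $\LL$,
$$
\Vol_\LL(\UU\cap\LL)\,\Vol_{\LL^\perp}(\pi_\LL(\UU))\le \binom{\ell}{m}\Vol(\UU),
$$
which is exactly the asserted bound. So the proof reduces to citing (or reproving) this inequality; the binomial constant is precisely the extremal constant in the Rogers--Shephard inequality, attained by simplices, which explains the shape of the bound.

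Concretely, I would carry out the steps as follows. First, reduce to the case where $\LL$ and $\LL^\perp$ are coordinate subspaces by an orthogonal change of coordinates, under which all three volumes and the constant are invariant. Second, set up the factorization $\Vol = \Vol_\LL\otimes\Vol_{\LL^\perp}$ and rewrite the claim as $\Vol\big((\UU\cap\LL)\times\pi_\LL(\UU)\big)\le\binom{\ell}{m}\Vol(\UU)$. Third, prove the containment $\tbinom{\ell}{m}^{-1}$-scaled (centered) cylinder $\subset\UU$, or equivalently invoke Rogers--Shephard: for each $b\in\pi_\LL(\UU)$ the fiber $\UU\cap(\LL+b)$ is a translate of a convex subset of $\LL$ containing $\pm$ a fixed vector plus $\UU\cap\LL$, and integrating the Brunn--Minkowski/one-dimensional-slice estimate over $b$ yields the constant $\binom{\ell}{m}$. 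Fourth, conclude.

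The main obstacle is the third step: getting the sharp combinatorial constant $\binom{\ell}{m}$ rather than a cruder bound like $2^m$ or $m!\binom{\ell}{m}$. This is genuinely the content of the Rogers--Shephard inequality for projections, and a self-contained argument requires either an induction on $m$ peeling off one dimension at a time (at each stage comparing a one-dimensional section and the complementary projection, which contributes a factor $\tfrac{\ell-j}{?}$ telescoping to the binomial coefficient) or a direct application of the known inequality. Since any constant depending only on $\ell$ and $m$ would in fact suffice for the downstream application (Proposition~\ref{pgeo} only needs $|\UU\cap\Gamma|=\Vol\UU(\det\Gamma^{-1}+O(\innr\UU^{-1}))$, where the implied constant may depend on $\Gamma$ and hence on $\ell$), one could even afford to be wasteful here; but to match the stated inequality I would cite Rogers--Shephard directly and note that the extremal case is the simplex.
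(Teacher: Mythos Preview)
Your identification of the inequality is correct: this is precisely the Rogers--Shephard section/projection inequality for symmetric convex bodies, and the paper's own ``proof'' is nothing more than a citation (to \cite[Lemma~6.6]{Bi98}), so your proposal to cite the known result matches the paper's approach exactly. Your further remark that any constant depending only on $\ell$ and $m$ would suffice for Proposition~\ref{pgeo} is also correct and worth noting.

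One small comment: the exploratory middle paragraph (with the ``$\tfrac{1}{?}$'' placeholders and the ad hoc convex combinations) does not converge to a self-contained argument, as you yourself recognize; if you want a genuine proof rather than a citation, the cleanest route is the standard Rogers--Shephard computation via the integral $\Vol(\UU)=\int_{\pi_\LL(\UU)}\Vol_\LL\bigl(\UU\cap(\LL+b)\bigr)\,db$ together with the Brunn--Minkowski-type lower bound on the fiber volumes in terms of $\Vol_\LL(\UU\cap\LL)$, which produces the factor $\binom{\ell}{m}^{-1}$ after integrating a Beta-function. But since both you and the paper ultimately defer to the literature, this is optional.
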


\begin{proof}
See \cite[Lemma~6.6]{Bi98}.
\end{proof}

\bigskip

%Let~$\UU$ be a bounded symmetric convex subset of~$\R^n$. 

The intersection ${\UU\cap\LL}$ contains the (open) $\ell$-dimensional ball of radius~$\innr\UU$. Hence Lemma~\ref{lproj} has the following consequence.

\begin{corollary}
\label{cproj}
In the set-up of Lemma~\ref{lproj} we have 
$$
\Vol_{\LL^\perp}(\pi_\LL(\UU))\le\frac c{(\innr\UU)^m}\Vol\UU,
$$
where~$c$ depends only on the dimension~$d$. 
\end{corollary}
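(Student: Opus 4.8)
The plan is to combine Lemma~\ref{lproj} with the elementary observation already recorded just before the statement, namely that the slice $\UU\cap\LL$ contains the open ball $B_m(\innr\UU)$ of radius $\innr\UU$ centered at the origin inside the $m$-dimensional subspace $\LL$. First I would invoke this: since $\UU$ is convex and symmetric it contains the origin, and by definition of $\innr\UU$ every point of $\LL$ at Euclidean distance less than $\innr\UU$ from the origin lies in the interior of $\UU$, hence in $\UU\cap\LL$. Therefore
$$
\Vol_\LL(\UU\cap\LL)\ge \Vol_\LL\bigl(B_m(\innr\UU)\bigr)=\omega_m (\innr\UU)^m,
$$
where $\omega_m$ is the volume of the unit ball in $\R^m$, a constant depending only on $m\le \ell$, hence only on~$\ell$ (one may simply take the maximum over $m=0,1,\dots,\ell$).

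Next I would feed this lower bound into Lemma~\ref{lproj}. That lemma gives
$$
\Vol_\LL(\UU\cap\LL)\,\Vol_{\LL^\perp}(\pi_\LL(\UU))\le \binom{\ell}{m}\Vol\UU ,
$$
so dividing by $\Vol_\LL(\UU\cap\LL)\ge \omega_m(\innr\UU)^m>0$ yields
$$
\Vol_{\LL^\perp}(\pi_\LL(\UU))\le \frac{\binom{\ell}{m}}{\omega_m}\cdot\frac{\Vol\UU}{(\innr\UU)^m}.
$$
Setting $c=\max_{0\le m\le \ell}\binom{\ell}{m}/\omega_m$, which depends only on the ambient dimension~$\ell$ (written $d$ in the statement), gives exactly the asserted inequality.

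There is essentially no obstacle here: the only mild points to handle are the degenerate cases. If $\LL=\{0\}$ then $m=0$, $\pi_\LL$ is the identity, $\UU\cap\LL=\{0\}$, and the inequality reads $\Vol\UU\le c\,\Vol\UU$, which is fine for $c\ge 1$. If $\innr\UU=0$, i.e. the boundary of $\UU$ touches the origin, then $\UU$ has empty interior (being convex and symmetric), so $\Vol\UU=0$ and both sides vanish, with the right-hand side interpreted as $0$; alternatively one simply notes the hypothesis of Proposition~\ref{pgeo} only uses this corollary when $\innr\UU>0$, so this case may be excluded. Thus the proof is a two-line deduction from Lemma~\ref{lproj} once the ball-containment remark is made precise.
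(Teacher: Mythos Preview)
Your proof is correct and follows exactly the paper's approach: the paper simply records, just before the corollary, that $\UU\cap\LL$ contains the open ball of radius $\innr\UU$, and then states the corollary as an immediate consequence of Lemma~\ref{lproj}. Your version merely spells out the volume computation and the constant explicitly, but the argument is the same.
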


%Now we are ready to prove Proposition~\ref{pgeo}. 
\begin{proof}[Proof of Proposition~\ref{pgeo}]
We may assume that ${\Gamma=\Z^\ell}$. Indeed, pick some basis of~$\Gamma$ and consider the new inner product, making this basis orthonormal.  The quantities ${|\UU\cap\Gamma|}$ and $\frac{\Vol\UU}{\det\Gamma}$ will be not altered, and the inner radius~$R$ will be replaced by ${R'\ge cR}$, where~$c$ depends only on the basis we picked. 

For a subset ${S\subseteq\{1,2,\ldots,\ell\}}$ let~$\LL_S$ be the subspace of $\R^\ell$ defined by ${x_i=0}$ for ${i\in S}$. 
By the classical result of Davenport~\cite{Da51}, 
$$
\bigl||\UU\cap\Z^\ell|-\Vol\UU\bigr| \le \sum_{S\ne\varnothing}\Vol_{\LL_S^\perp}(\pi_{\LL_S}(\UU)),
$$
the sum being over the non-empty subsets ${S\subseteq\{1,2,\ldots,\ell\}}$. By Corollary~\ref{cproj}, each summand on the right is bounded by  $c(\ell)\frac{\Vol\UU}{\innr\UU}$. This proves Proposition~\ref{pgeo}. 
\end{proof}

Here is an immediate consequence. 

\begin{corollary}
\label{ccount}
For  positive real numbers~$B$ and~$E$  satisfying ${B\ge E\ge 2}$,  
at most ${O(EB^{\ell-1})}$ numbers ${\alpha \in \OO_K}$ satisfy
$$
\max_{v\in M_K^\infty}|\alpha|_v\le B,   \qquad \min_{v\in M_K^\infty}|\alpha|_v\le E.
$$
The implicit constant depends only on~$K$. 
\end{corollary}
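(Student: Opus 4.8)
The plan is to deduce this from Proposition~\ref{pgeo} by a dyadic decomposition of the ``small'' place. First I would fix an infinite place $v_0 \in M_K^\infty$ and count the $\alpha \in \OO_K$ with $\max_{v}|\alpha|_v \le B$ and $|\alpha|_{v_0} \le E$; summing over the (constantly many) choices of $v_0$ then gives the full count up to a constant depending only on $K$. For this fixed $v_0$, I would apply Proposition~\ref{pgeo} with $\Gamma = \OO_K$ viewed as a lattice in $\R^{s_1}\times\C^{s_2}$ and with $\UU = \UU(B,E,v_0)$ the symmetric convex body cut out by $|x|_v \le B$ for $v \ne v_0$ and $|x|_{v_0} \le E$. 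The volume of this body is $O(E^{e_{v_0}} B^{\ell - e_{v_0}})$, which is $O(EB^{\ell-1})$ when $v_0$ is real and $O(E^2 B^{\ell-2})$ when $v_0$ is complex; in the complex case, since $E \le B$, this is again $O(EB^{\ell-1})$ (indeed $E^2 B^{\ell-2} \le E B^{\ell-1}$). So the main term is already of the desired shape.

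The second step is to control the error term from Proposition~\ref{pgeo}, which is $O(\Vol\UU / \innr\UU)$. Here the obstacle is that $\innr\UU$ can be as small as $E$ (it is essentially $\min\{B,\dots,B,E\} = E$, up to constants depending on the normalization of the absolute values), so the error term is $O(\Vol\UU / E)$. But $\Vol\UU / E = O(E^{e_{v_0}-1} B^{\ell - e_{v_0}})$, which is $O(B^{\ell-1})$ when $v_0$ is real and $O(E B^{\ell - 2}) = O(B^{\ell-1})$ when $v_0$ is complex (using $E \le B$ once more). In all cases the error is $O(B^{\ell-1}) = O(EB^{\ell-1})$ since $E \ge 2 \ge 1$. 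Thus both the main term and the error are $O(EB^{\ell-1})$, the implied constant depending only on $K$ (through $D_K$, $s_1$, $s_2$, and the normalization constants relating the $|\cdot|_v$ to the Euclidean coordinates), and summing over the finitely many $v_0 \in M_K^\infty$ completes the argument.

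I expect the only genuinely delicate point to be the lower bound $\innr\UU \gg E$: one must check that the hyperplane-type faces coming from the conditions $|x|_v \le B$ stay at Euclidean distance $\gg B \ge E$ from the origin, while the face from $|x|_{v_0}\le E$ sits at distance $\gg E$, so that the minimum over all faces is $\gg E$. This is a routine consequence of the equivalence of the various norms on the finite-dimensional space $\R^{s_1}\times\C^{s_2}$, with constants absorbed into the $O$-notation; once it is in place, everything else is bookkeeping. Alternatively, one can avoid estimating $\innr\UU$ directly by noting that $\UU(B,E,v_0) \subseteq \UU(B,E,v_0)'$ where the latter is the body with \emph{all} infinite-place bounds equal to $E$ scaled appropriately — but the dyadic/direct approach above is cleaner.
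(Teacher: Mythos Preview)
Your proposal is correct and follows essentially the same approach as the paper: the paper states the corollary as an immediate consequence of Proposition~\ref{pcountv} (itself a repackaging of Proposition~\ref{pgeo}), and your argument---fix the ``small'' place $v_0$, apply the lattice-point count to the box with $B_{v_0}=E$ and $B_v=B$ otherwise, then sum over $v_0\in M_K^\infty$---is exactly that computation unpacked. Your handling of the error term and of the real/complex dichotomy is right; the only simplification available is to quote Proposition~\ref{pcountv} directly, which already absorbs the $\innr\UU$ estimate you spell out.
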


\subsection{Proof of Theorem~\ref{thdvorzann}}
\label{ssdvorzann}

The following result is Corollary~8.2 from~\cite{Bi16}.

\begin{theorem}
\label{thdvorzannb}
%Let~$K$,~$\CC$  and~$t$ be as in Theorem~\ref{thdvorzann}.    Then there 
There exist positive numbers ${c=c(K,\CC,t)}$ and ${B_0=B_0(K,\CC,t)}$ such that, for every ${B\ge B_0}$, the following holds. Consider  the points ${P\in \CC(\bar K)}$  satisfying
\begin{align}
\label{einok}
t(P) &\in \OO_K,\\
\label{etpb}
|t(P)|_v&\le B \qquad (v \in M_K^\infty).
\end{align} 
Then among the number fields $K(P)$, where~$P$ satisfies the conditions above, 
there are at least $cB^\ell/\log B$ distinct fields.% of degree~$n$ over~$K$. 
\end{theorem}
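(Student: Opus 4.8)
\emph{Strategy.} The plan is to combine three ingredients that are already available: counting algebraic integers in an archimedean box (Proposition~\ref{pcountv}), controlling the exceptional thin set via Hilbert irreducibility (Proposition~\ref{pgalo} and Theorem~\ref{tthin}), and tracking the ramification of the fibres of~$t$.

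First I would fix a primitive element: choose ${x\in K(\CC)}$ with ${K(\CC)=K(t)(x)}$, integral over $K[t]$, and let ${f(T,X)\in\OO_K[T,X]}$ be the corresponding polynomial, monic of degree~$d$ in~$X$, irreducible over~$K(T)$, with ${f(t,x)=0}$ on~$\CC$; write ${D(T)=\mathrm{disc}_X f(T,X)\in\OO_K[T]}$ and let ${b(T)}$ be the (non‑constant) polynomial whose roots are the finite branch points of~$t$. Outside a thin set ${\mho_0\subset K}$ (coming from Proposition~\ref{pgalo}, together with the requirement that~$x$ separate the fibre), and outside the finitely many branch points and poles of~$x$, the polynomial ${f(\alpha,X)}$ is separable and irreducible over~$K$, so for every ${P\in t^{-1}(\alpha)}$ the field ${K(P)=K[X]/(f(\alpha,X))}$ has degree~$d$. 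By Proposition~\ref{pcountv} there are ${\sim c_1B^\ell}$ integers ${\alpha\in\OO_K}$ with ${|\alpha|_v\le B}$ for all ${v\in M_K^\infty}$, while by Theorem~\ref{tthin} only ${O(B^{\ell/2})}$ of them lie in~${\mho_0}$; hence the set~${\mathcal A=\mathcal A(B)}$ of ``good'' such~$\alpha$ satisfies ${|\mathcal A|\gg B^\ell}$, and each ${\alpha\in\mathcal A}$ produces a degree-$d$ field ${K_\alpha:=K[X]/(f(\alpha,X))}$.

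It remains to show that the ${K_\alpha}$, ${\alpha\in\mathcal A}$, take ${\gg B^\ell/\log B}$ distinct values, and this is the heart of the matter. The mechanism I would use is ramification. A local computation at a prime~$\gerp$ — the Puiseux/Hensel analysis of ${t-\beta}$ near a ramification point of~$t$ above a branch point~$\beta$, essentially the computation underlying the discriminant estimate of Proposition~\ref{pdiscrim} — shows that, for all~$\gerp$ outside a fixed finite set, $\gerp$ ramifies in~$K_\alpha$ whenever $\alpha$ is $\gerp$-adically close to such a~$\beta$ with ${v_\gerp(\alpha-\beta)}$ prime to the ramification index; in particular it suffices that ${v_\gerp(\alpha-\beta)=1}$, and if moreover ${N\gerp>(2B)^\ell}$ this holds automatically once ${\gerp\mid(\alpha-\beta)}$, since ${N(\alpha-\beta)\le(2B)^\ell}$. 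By Chebotarev's theorem a positive proportion ${\rho>0}$ of all primes~$\gerp$ have a branch point of $t$ reducing into the residue field ${\OO_K/\gerp}$; I would take a short range of such primes with ${N\gerp}$ of order ${\eps B^\ell}$ (with ${\eps}$, and the length of the range, small and depending only on~$K$,~$\CC$,~$t$), of which there are still ${\gg B^\ell/\log B}$ by the prime ideal theorem, and a lattice-point count uniform in the sublattices ${\gerp\subset\OO_K}$ shows that each relevant residue class meets the box in a good element~${\alpha_\gerp}$. A union-bound argument — exploiting that ${\sum_\gerp 1/N\gerp=O(1/\log B)}$ over the chosen primes — lets one choose the~${\alpha_\gerp}$ so that for ${\gerp\ne\gerp'}$ the prime~${\gerp'}$ stays unramified in~${K_{\alpha_\gerp}}$; the fields ${K_{\alpha_\gerp}}$ are then pairwise distinct, being ramified at pairwise distinct primes. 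The factor ${\log B}$ is precisely the one in ${\#\{\gerp:N\gerp\le X\}\sim X/\log X}$.

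The main obstacle is this last step, and concretely two technical points inside it: (i) the exact statement of ``$\gerp$ ramifies in $K_\alpha$'' in terms of $v_\gerp(\alpha-\beta)$, which is a local computation of the type behind Proposition~\ref{pdiscrim} and requires care at the finitely many bad primes; and (ii) the quantitative lattice geometry — hitting a prescribed residue class modulo a prime of norm comparable to~$B^\ell$ inside the archimedean box ${\{|\alpha|_v\le B\}}$, for which the fixed-lattice Proposition~\ref{pgeo} is not enough and one needs an estimate uniform over the~${\gerp}$ (Davenport/Widmer style) — together with the bookkeeping that keeps the ramification of the chosen ${\alpha_\gerp}$ mutually disjoint. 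An equivalent and structurally cleaner route is to bound the number of collisions ${\#\{(\alpha,\alpha')\in\mathcal A^2:K_\alpha=K_{\alpha'}\}}$ by ${O(B^\ell\log B)}$: the diagonal contributes ${O(B^\ell)}$, and for ${\alpha\ne\alpha'}$ one shows, following Dvornicich and Zannier~\cite{DZ94}, that such a pair lies on one of boundedly many auxiliary curves whose integral points of height ${\le B}$ are estimated by Siegel's theorem and elementary counting; Cauchy--Schwarz then gives ${\#\{\text{distinct }K_\alpha\}\ge|\mathcal A|^2/\#\{\text{collisions}\}\gg B^\ell/\log B}$. Either way, this is the substance of Corollary~8.2 of~\cite{Bi16}.
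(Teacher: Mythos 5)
The paper does not actually supply a proof of Theorem~\ref{thdvorzannb}: the statement is taken verbatim as Corollary~8.2 of~\cite{Bi16}, and the citation is the entire ``proof.'' So there is no internal argument to compare with step by step. That said, your sketch is a fair reconstruction of the Dvornicich--Zannier--Bilu strategy that the reference carries out. You locate the $\log B$ loss correctly at the prime ideal theorem, and the two mechanisms you describe --- (a) arranging for the fields $K_{\alpha_\gerp}$ to ramify at pairwise distinct primes $\gerp$ of norm of order $B^\ell$, and (b) bounding collisions $K_\alpha=K_{\alpha'}$ via correspondence curves in $\CC\times\CC$, Siegel's theorem, and Cauchy--Schwarz --- are exactly the two standard routes, and the second one is close in spirit to~\cite{DZ94}. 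You are also right to flag as the genuine technical content the uniform-in-$\gerp$ lattice-point estimate (Proposition~\ref{pgeo} is stated for a fixed lattice, and here the error term must be controlled over sublattices of determinant as large as $B^\ell$) and the local criterion ``$\gerp$ ramifies in $K_\alpha$''; handling both, plus the non-$K$-rational branch points $\beta$ and the finitely many bad primes, is precisely the work done in~\cite{Bi16}.

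One concrete slip worth correcting: you write that if $N\gerp>(2B)^\ell$ then $\gerp\mid(\alpha-\beta)$ forces $v_\gerp(\alpha-\beta)=1$ because $N(\alpha-\beta)\le(2B)^\ell$. But $\gerp\mid(\alpha-\beta)$ already gives $N\gerp\le N(\alpha-\beta)$, so under your hypothesis the divisibility can never occur and the implication is vacuous. What you want is the threshold $N\gerp>(2B)^{\ell/2}$: then $v_\gerp(\alpha-\beta)\ge 2$ would give $N\gerp^2\le N(\alpha-\beta)\le(2B)^\ell$, a contradiction, so $\gerp\mid(\alpha-\beta)$ implies $v_\gerp(\alpha-\beta)=1$. (And the norm bound on $\alpha-\beta$ should be taken over $K(\beta)$, with $|\beta|_v\ll B$, when $\beta\notin K$.) Since you later take $N\gerp\asymp\eps B^\ell\gg B^{\ell/2}$, this does not change the architecture. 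Beyond that, your proposal is correct in outline and honest about its gaps --- but it remains a sketch, and the paper itself offers no more than the external citation, so the two gaps you name are exactly what one would have to look up in~\cite{Bi16}.
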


Combining this with Theorem~\ref{tthin} and  Corollary~\ref{ccount}, we obtain the following statement.

\begin{corollary}
\label{cdvorzannbb}
%Let~$K$,~$\CC$  and~$t$ be as in Theorem~\ref{thdvorzann}, and let 
Let ${E\ge 2}$ be a real number and let~$\mho$ be a thin subset of~$K$.     Then there exist positive numbers ${c=c(K,\CC,t)}$ and ${B_0=B_0(K,\CC,t,E,\mho)}$ such that, for every ${B\ge B_0}$, the following holds. Consider  the points ${P\in \CC(\bar K)}$  satisfying
\begin{align}
\label{einokbb}
t(P) &\in \OO_K\smallsetminus\mho,\\
\label{etpe}
|t(P)|_v&\ge E \qquad (v \in M_K^\infty),\\
\label{ehebbb}
\Height(t(P))&\le B.
\end{align} 
Then among the number fields $K(P)$, where~$P$ satisfies the conditions above, 
there are at least $cB^\ell/\log B$ distinct fields of degree~$d$ over~$K$. 
\end{corollary}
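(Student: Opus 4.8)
The plan is to bootstrap from the three preceding results. Theorem~\ref{thdvorzannb} already produces at least ${c_1B^\ell/\log B}$ distinct fields ${K(P)}$ (with ${c_1=c_1(K,\CC,t)>0}$) over the range ${t(P)\in\OO_K}$, ${|t(P)|_v\le B}$, so what remains is to show that imposing the extra constraints~\eqref{einokbb}--\eqref{ehebbb} and the degree condition ${[K(P):K]=d}$ discards only ${o(B^\ell/\log B)}$ of them. Here Theorem~\ref{tthin} will control the thin exceptional set and Corollary~\ref{ccount} the points having a small archimedean conjugate.

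First I would enlarge~$\mho$. Put ${\mho'=\mho\cup\mho_0\cup\mho_1}$, where ${\mho_0=\{\alpha\in K : \text{$t^{-1}(\alpha)$ is $K$-reducible}\}}$, which is thin by Proposition~\ref{pgalo}, and ${\mho_1}$ is the finite set of $K$-rational branch points of~$t$. Then ${\mho'}$ is thin, and for every ${\alpha\in K\smallsetminus\mho'}$ the fiber ${t^{-1}(\alpha)}$ consists of~$d$ distinct points forming a single ${\Gal(\bar K/K)}$-orbit, so ${[K(P):K]=d}$ for each ${P\in t^{-1}(\alpha)}$. I would also record the elementary identity ${\Height(\alpha)=\bigl(\prod_{v\in M_K^\infty}\max\{1,|\alpha|_v\}^{[K_v:\Q_v]}\bigr)^{1/\ell}}$ for ${\alpha\in\OO_K}$, which shows that ${\Height(\alpha)\le B}$ whenever ${|\alpha|_v\le B}$ for all ${v\in M_K^\infty}$, since ${\sum_{v\in M_K^\infty}[K_v:\Q_v]=\ell}$. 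In particular every point provided by Theorem~\ref{thdvorzannb} already satisfies~\eqref{ehebbb}.

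Now invoke Theorem~\ref{thdvorzannb} with parameter~$B$: it produces at least ${c_1B^\ell/\log B}$ distinct fields~$F$, each of the form ${F=K(P)}$ with ${t(P)\in\OO_K}$ and ${|t(P)|_v\le B}$ for all ${v\in M_K^\infty}$. For each such~$F$ fix one witnessing point~${P_F}$ and set ${\alpha_F=t(P_F)}$. Since every fiber of~$t$ has at most~$d$ points, at most~$d$ of the fields~$F$ share a given value of~${\alpha_F}$. Hence the number of~$F$ with ${\alpha_F\in\mho'}$ is at most~$d$ times the number of ${\beta\in\mho'\cap\OO_K}$ with ${|\beta|_v\le B}$ for all~$v$, which is ${O(B^{\ell/2})}$ by Theorem~\ref{tthin}; and the number of~$F$ with ${|\alpha_F|_v<E}$ for some~$v$ is at most~$d$ times the number of ${\beta\in\OO_K}$ with ${\max_v|\beta|_v\le B}$ and ${\min_v|\beta|_v\le E}$, which is ${O(EB^{\ell-1})}$ by Corollary~\ref{ccount} (applicable since ${B\ge E}$).

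Because ${\ell/2<\ell}$ and ${\ell-1<\ell}$, both of these counts are ${o(B^\ell/\log B)}$, so there is ${B_0=B_0(K,\CC,t,E,\mho)}$ such that for ${B\ge B_0}$ at least ${(c_1/2)B^\ell/\log B}$ of the fields~$F$ satisfy ${\alpha_F\in\OO_K\smallsetminus\mho'}$ and ${|\alpha_F|_v\ge E}$ for all ${v\in M_K^\infty}$. For each such~$F$ the point~${P_F}$ fulfils~\eqref{einokbb} (as ${\mho\subseteq\mho'}$),~\eqref{etpe} and~\eqref{ehebbb}, while ${[K(P_F):K]=d}$ because ${\alpha_F\notin\mho'}$; taking ${c=c_1/2}$ completes the proof. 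I do not anticipate a serious obstacle, since the content lies entirely in the three cited statements; the only point needing care is the bookkeeping — using the at-most-$d$-to-$1$ correspondence ${F\mapsto\alpha_F}$ correctly and verifying that the thin-set contribution ${O(B^{\ell/2})}$ and the small-conjugate contribution ${O(EB^{\ell-1})}$ are genuinely of strictly smaller order than the main term ${B^\ell/\log B}$.
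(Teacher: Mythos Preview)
Your proof is correct and follows essentially the same route as the paper: start from the ${c_1B^\ell/\log B}$ distinct fields supplied by Theorem~\ref{thdvorzannb}, use Corollary~\ref{ccount} to bound the points with a small archimedean conjugate by ${O(EB^{\ell-1})}$, use Proposition~\ref{pgalo} and Theorem~\ref{tthin} to bound the thin exceptional contribution by ${O(B^{\ell/2})}$, note that the height condition~\eqref{ehebbb} is automatic for algebraic integers with ${|\cdot|_v\le B}$, and take ${c=c_1/2}$. Your explicit bookkeeping via the at-most-$d$-to-$1$ map ${F\mapsto\alpha_F}$ and the inclusion of the branch locus in~$\mho'$ are minor expository refinements, not a different argument.
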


\begin{proof}
Corollary~\ref{ccount}  implies that there exists at most $O(EB^{\ell-1})$ points~$P$ for which~\eqref{einok} and~\eqref{etpb} hold, but~\eqref{etpe} does not hold.  Denote by~$c'$ and $B_0'$  the numbers~$c$ and $B_0$ from Theorem~\ref{thdvorzannb}.   Then for ${B\ge B_0'}$ we find   ${c'B^\ell/\log B-O(EB^{\ell-1})}$ points~$P$ satisfying~\eqref{etpb},~\eqref{einokbb} and~\eqref{etpe},  for which the fields 
$K(P)$ are pairwise distinct. Since 
${\Height(\alpha)\le \max_{v\in M_K^\infty} |\alpha|_v}$
for ${\alpha\in \OO_K}$, all these points satisfy~\eqref{ehebbb}.

By Proposition~\ref{pgalo} and Theorem~\ref{tthin}, only $O(B^{\ell/2})$ of these points~$P$ satisfy ${t(P)\in \mho}$ or ${[K(P):K]<d}$.  
It remains to observe that for sufficiently large~$B$ we have 
$$
c'B^\ell/\log B-O(EB^{\ell-1})-O(B^{\ell/2}) \ge (c'/2) B^\ell/\log B. 
$$
This proves Corollary~\ref{cdvorzannbb} with ${c=c'/2}$.% and ${B_0=\max\{B_0',c''E\log E\}}$.
\end{proof}

\bigskip

Now we are ready to complete the proof of Theorem~\ref{thdvorzann}. 
Applying a suitable coordinate change, we  reduce it to Corollary~\ref{cdvorzannbb}.

Let~$S$ and~$\eps$ be as in Theorem~\ref{thdvorzann}. Pick a non-zero ${a\in \OO_K}$ and a  real ${E\ge 2}$ satisfying 
\begin{align*}
|a|_v&<\min\{\eps,1\} && (v\in S^0),\\
E&> \eps^{-1}+|a|_v^{-1} && (v\in S^\infty),
\end{align*}
where~$S^0$ and~$S^\infty$ denote the sets of finite and of infinite places from~$S$. Set ${t^\ast = 1/t+1/a}$, so that ${t=1/(t^\ast-a^{-1})}$, and ${\mho^\ast=\{1/\tau+1/a: \tau\in \mho\}}$.   

Applying Corollary~\ref{cdvorzannbb} to the data~$K$,~$\CC$,~$t^\ast$,~$E$ and~$\mho^\ast$, for every ${B\ge B_0}$ we find $cB^\ell/\log B$ points~$P$ satisfying
\begin{align}
\label{einokbbstar}
t^\ast(P) &\in \OO_K\smallsetminus\mho^\ast,\\
\label{etpestar}
|t^\ast(P)|_v&\ge E \qquad (v \in M_K^\infty),\\
\Height(t^\ast(P))&\le B.
\nonumber
\end{align} 
and such that the fields $K(P)$ are pairwise distinct and of degree~$d$ over~$K$.

Due to our choice of~$a$ and~$E$, inequality~\eqref{eteps} follows from~\eqref{einokbbstar} for ${v\in S^0}$ and from~\eqref{etpestar} for ${v\in S^\infty}$. Also, ${t(P)\in\mho}$ if and only if ${t^\ast(P)\in \mho^\ast}$. Finally, using~\eqref{eprohe} we obtain 
$$
\Height(t(P))\le 2\Height(a)\Height(t^\ast(P))\le 2\Height(a) B.
$$
This proves Theorem~\ref{thdvorzann} with suitably adjusted~$c$ and~$B_0$. \qed

%%%%%%%%%%%%%%%%%%%%%%%%%%%%%%%%%%%%%%%%%%%%%

\subsection{Estimating the discriminants}
\label{ssdiscrim}

In this subsection we estimate the discriminants of number fields generated by irreducible fibers.  Recall that, given a number field extension $L/K$, we define 
$$
\DD(L/K) =\bigl|\norm_{K/\Q}\Delta(L/K)\bigr|^{1/\ell},
$$
where $\Delta(L/K)$ is the  discriminant of~$L$ over~$K$ and ${\ell=[K:\Q]}$. 

For a point ${P\in \CC(\bar K)}$ such that ${t(P)\in K}$, we will write $\DD(P)$ for $\DD(K(P)/K)$.

\begin{proposition}
\label{pdiscrim}
Assume that there exists a non-constant rational function ${x\in K(\CC)}$ of degree~$m$ such that ${K(\CC)=K(t,x)}$. 
Then for every point ${P\in \CC(\bar K)}$ such that 
\begin{equation}
\label{ehyp}
t(P)\in K, \qquad [K(P):K]=d
\end{equation}  
we have 
\begin{equation}
\label{ediscrim}
\DD(P) \le c \Height(t(P))^{2m(d-1)},
\end{equation}
where~$c$ depends on~$\CC$ and~$t$ (but not on~$K$). 
\end{proposition}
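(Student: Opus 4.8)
The plan is to bound the relative discriminant $\Delta(K(P)/K)$ by the discriminant of an explicit integral generator of $K(P)$ over $K$, and then to bound that generator's discriminant in terms of $\Height(t(P))$. Write $\alpha = t(P) \in K$ and $\xi = x(P)$. Since $K(\CC) = K(t,x)$, specializing at $P$ gives $K(P) = K(\xi)$, and by hypothesis $[K(P):K] = d$, so $\xi$ has degree $d$ over $K$. First I would produce a polynomial relation $F(t,x) = 0$ over $K$ with $F$ the (suitably cleared) defining equation of $\CC$ in the coordinates $t,x$; specializing the first variable to $\alpha$ yields a polynomial $F(\alpha, X) \in K[X]$ of degree (in $X$) at most some fixed $D$ depending only on $\CC$ and $t$, which is divisible by the minimal polynomial $g_\alpha(X)$ of $\xi$ over $K$. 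The key point is that the coefficients of $F(\alpha, X)$ are polynomials in $\alpha$ of bounded degree with coefficients depending only on $\CC$ and $t$; hence by the standard height estimates~\eqref{eprohe}, each coefficient has height $\le c\,\Height(\alpha)^{D}$ for a constant $c = c(\CC,t)$, and therefore $\Height(\xi) \le c'\Height(\alpha)^{D}$ as well, since $\xi$ is a root of $F(\alpha,X)$ (using the bound on roots in terms of coefficients).

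Next I would pass to an algebraic integer. Multiplying $\xi$ by a suitable element — concretely, clearing the leading coefficient and denominators of $g_\alpha$ — one obtains $\theta = \lambda \xi \in \OO_{K(P)}$ with $\lambda \in K$ of height controlled by $\Height(\alpha)$, such that $\theta$ is integral over $\OO_K$ and still generates $K(P)$ over $K$. Then $\OO_K[\theta] \subseteq \OO_{K(P)}$, and the standard tower/conductor inequality for discriminants gives
\begin{equation}
\label{epdisc}
\Delta(K(P)/K) \mid \mathrm{disc}_{K}(\theta),
\end{equation}
where $\mathrm{disc}_K(\theta) = \prod_{i<j}(\theta_i - \theta_j)^2$, the $\theta_i$ being the $K$-conjugates of $\theta$. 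Taking $\norm_{K/\Q}$ of~\eqref{epdisc} and using $\norm_{K/\Q}\bigl(\mathrm{disc}_K(\theta)\bigr) = \prod_{i<j}\bigl(H(\theta_i - \theta_j)\bigr)^{\text{(bounded)}}$-type bookkeeping — more precisely, bounding $|\norm_{K/\Q}\mathrm{disc}_K(\theta)|$ by the height of $\mathrm{disc}_K(\theta)$ raised to $\ell$, and $\Height(\mathrm{disc}_K(\theta)) \le \bigl(2\Height(\theta)\bigr)^{d(d-1)}$ by~\eqref{eprohe} — I get
$$
\DD(P) = \bigl|\norm_{K/\Q}\Delta(K(P)/K)\bigr|^{1/\ell} \le \Height\bigl(\mathrm{disc}_K(\theta)\bigr) \le \bigl(2\Height(\theta)\bigr)^{d(d-1)}.
$$
Feeding in $\Height(\theta) \le \Height(\lambda)\Height(\xi) \le c''\Height(\alpha)^{D'}$ yields a bound of the shape $\DD(P) \le c\,\Height(t(P))^{N}$ for some exponent $N$ and constant $c = c(\CC,t)$.

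It remains to get the exponent down to exactly $2m(d-1)$. This is where I would be more careful: rather than using the full defining equation of $\CC$, I would use that $x$ has degree $m$, so the minimal polynomial of $x$ over $K(t)$ has $t$-degree at most $m$ in each coefficient (after clearing denominators coming from the finite fibres of $t$), hence $F(\alpha,X)$ — taken to be this minimal polynomial specialized and cleared — has coefficients of height $\le c\,\Height(\alpha)^{m}$. Tracking constants, $\Height(\theta) \ll \Height(\alpha)^{m}$, and then $\DD(P) \ll \Height(\alpha)^{m \cdot d(d-1)}$; the improvement to $2m(d-1)$ comes from the observation that $\mathrm{disc}_K(\theta)$ is not merely bounded by $\Height(\theta)^{d(d-1)}$ but, being a polynomial of degree $2(d-1)$ in each conjugate $\theta_i$ — or equivalently of degree $2(d-1)$ in the coefficients of the minimal polynomial of $\theta$ — is bounded by (height of those coefficients)${}^{2(d-1)}$, and those coefficients have height $\ll \Height(\alpha)^{m}$. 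Thus the main obstacle, and the part requiring genuine care rather than routine estimates, is the precise bookkeeping that turns "a bound polynomial in $\Height(t(P))$" into the sharp exponent $2m(d-1)$, in particular keeping the constant independent of $K$ (as claimed) by working throughout with the absolute, $\Q$-normalized height and the relative discriminant $\DD$, which behaves multiplicatively in towers.
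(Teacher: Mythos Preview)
Your overall strategy matches the paper's: take the minimal polynomial $F(T,X)\in K[T,X]$ of $x$ over $K(t)$ (so $\deg_T F=m$, $\deg_X F=d$), specialize $T\mapsto t(P)$ to get $f_P(X)=F(t(P),X)$, and bound $\DD(P)$ through the height of $f_P$. The difference is that the paper never passes to an integral generator. It invokes a ready-made estimate (Lemma~\ref{ldiscrim}, quoting \cite[Cor.~3.17]{BSS13})
\[
\DD(K(\beta)/K)\le d^{3d}\,\Heightp(f)^{2(d-1)}
\]
for any $K$-irreducible $f$ of degree $d$ with root $\beta$, where $\Heightp$ is the \emph{projective} height of the coefficient vector. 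Combined with the elementary bound $\Heightp(f_P)\le (m+1)\Heightp(F)\Height(t(P))^m$ (Lemma~\ref{lhpol}), this gives the exponent $2m(d-1)$ in one line; finitely many exceptional $P$ (where $K(P)\ne K(x(P))$, or $P$ is a pole) are absorbed into the constant.

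Your detour through $\theta=\lambda\xi$ creates two real gaps. First, the assertion that $\mathrm{disc}_K(\theta)$ is ``of degree $2(d-1)$ in the coefficients of the minimal polynomial of $\theta$'' is false for \emph{monic} polynomials: for $X^d+c_{d-1}X^{d-1}+\cdots+c_0$ the discriminant is only weighted-homogeneous (weight $d(d-1)$, with $c_i$ of weight $d-i$); already at $d=3$ there is a $c_1^3$ term. Homogeneity of degree $2(d-1)$ does hold for the discriminant of a \emph{non-monic} polynomial in all $d+1$ coefficients $a_0,\ldots,a_d$, but that quantity differs from $\mathrm{disc}_K(\theta)$ by a power of the leading coefficient. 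Second, after scaling by $\lambda$ the coefficients of the monic minimal polynomial of $\theta$ no longer have height $\ll\Height(\alpha)^m$: the $i$-th coefficient picks up $\lambda^{d-i}$, and $\Height(\lambda)$ is itself of order $\Height(\alpha)^m$. Carried through honestly, this route yields an exponent strictly worse than $2m(d-1)$. The fix is precisely what the paper does: keep the projective height of the non-monic $f_P$ (which is scale-invariant, so no denominators need clearing) and use the $\Heightp(f)^{2(d-1)}$ bound, which already packages the leading-coefficient and integrality bookkeeping.
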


{\sloppy

For the proof we need some lemmas. Recall the notion of  projective height: if ${\ualpha=(\alpha_0, \ldots, \alpha_N)\in \P^N(\bar\Q)}$ then 
$$
\Heightp(\ualpha):=\prod_{v\in M_L}\max\{|\alpha_0|_v,\ldots,|\alpha_N|_v \}^{[L_v:\Q_v]/[L:\Q]},
$$
where~$L$ is a number field containing ${\alpha_0, \ldots, \alpha_N}$. %If ${\ualpha=(\alpha_1, \ldots, \alpha_N)\in \bar\Q^N}$ then$$\Heighta(\ualpha)=\prod_{v\in M_L}\max\{1,|\alpha_1|_v,\ldots,|\alpha_N|_v \}^{[L_v:\Q_v]/[L:\Q]}.$$
Note that ${\Height(\alpha)=\Heightp(1,\alpha)}$ for ${\alpha\in \bar\Q}$. 

}

The projective height of a polynomial with algebraic coefficients is the projective height of the vector of its non-zero coefficients. 

\begin{lemma}
\label{ldiscrim}
Let ${f(X)\in K[X]}$ be a $K$-irreducible 
polynomial of degree~$d$. Then for any of its roots ${\beta \in \bar K}$ we have 
$$
\DD(K(\beta)/K) \le d^{3d}\Heightp(f)^{2(d-1)}.
$$
\end{lemma}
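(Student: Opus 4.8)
The plan is to bound $\DD(K(\beta)/K)$ by the discriminant of the polynomial $f$ itself, and then to bound $\Disc(f)$ in terms of the projective height $\Heightp(f)$. First, recall that for any root $\beta$ of a $K$-irreducible polynomial $f\in K[X]$ of degree $d$, the order $K[\beta]=\OO_K[\beta]$ sits inside the ring of integers of $K(\beta)$ (after clearing denominators one may replace $f$ by a polynomial whose coefficients are algebraic integers, but it is cleaner to work directly with the conductor-discriminant relation), and the ideal-theoretic discriminant $\Delta(K(\beta)/K)$ divides the principal ideal generated by $\Disc(f)$. Taking $\norm_{K/\Q}$ and $\ell$-th roots, one gets $\DD(K(\beta)/K)\le \Height(\Disc(f))$, up to the issue that $f$ need not be monic or integral: if $f = a_d X^d + \dots + a_0$, then $a_d\beta$ is an algebraic integer satisfying a monic polynomial of controlled height, and $\Disc$ scales by a power of $a_d$, which is absorbed into the height estimate. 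So the first step reduces the lemma to the inequality $\Height(\Disc(f))\le d^{3d}\Heightp(f)^{2(d-1)}$ (with perhaps a slightly different constant that one checks is at most $d^{3d}$).

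Next I would estimate the height of the discriminant. Writing $\Disc(f) = (-1)^{d(d-1)/2} a_d^{-1}\Res(f,f')$, or more conveniently expressing $\Disc(f)$ as a specific integer polynomial in the coefficients $a_0,\dots,a_d$ (the Sylvester determinant of $f$ and $f'$, a $(2d-1)\times(2d-1)$ determinant whose entries are coefficients of $f$ and $f'$), one sees that $\Disc(f)$ is a homogeneous polynomial of degree $2d-2$ in $a_0,\dots,a_d$ with integer coefficients. The number of monomials, and the sizes of the integer coefficients appearing, are bounded combinatorially: each term of the $(2d-1)\times(2d-1)$ Sylvester determinant is a product of $2d-1$ entries, of which $d$ are coefficients of $f'$ contributing factors at most $d$ each, so the integer coefficients are bounded by $(2d-1)!\,d^d$ or so, and there are at most $(2d-1)!$ terms. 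Applying the standard height inequalities~\eqref{eprohe} — subadditivity of $\Height$ under sums (with a factor $2$ per addition) and multiplicativity under products — to the expression of $\Disc(f)$ as such a sum of monomials of degree $2d-2$ in the $a_i$, and using $\Height(a_i)\le \Heightp(f)$ for each $i$, yields $\Height(\Disc(f)) \le C(d)\,\Heightp(f)^{2(d-1)}$ where $C(d)$ is an explicit product of factorials and powers of $d$ and $2$. A crude accounting gives $C(d)\le d^{3d}$ for all $d\ge 2$; this is the one routine calculation I would actually carry out, being slightly careful that the bound $d^{3d}$ genuinely dominates the combinatorial constant (it does, comfortably, since $(2d-1)!\cdot 2^{2d-1}\cdot d^d$ is far below $d^{3d}$).

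The main obstacle I anticipate is bookkeeping rather than conceptual: one must treat the non-monic, non-integral case of $f$ cleanly, making sure the factor $a_d$ that appears when relating $\Disc(f)$ to the discriminant of the order $\OO_K[a_d\beta]$ (or of a suitable integral model) is correctly accounted for and absorbed into $\Heightp(f)$, and one must choose whether to state everything with the projective height of the coefficient vector (which is invariant under scaling $f$) throughout — the latter is the right normalization and makes the scaling issue disappear, since $\Heightp(cf)=\Heightp(f)$ and $\Disc(cf)=c^{2d-2}\Disc(f)$ so $\Heightp(1,\Disc(f))$ transforms compatibly. With that normalization fixed at the outset, the proof is: (i) $\DD(K(\beta)/K)\le \Height(\Disc(f))$ via the conductor–discriminant inclusion $\Delta(K(\beta)/K)\mid \Disc(f)\OO_K$; (ii) $\Height(\Disc(f))\le d^{3d}\Heightp(f)^{2(d-1)}$ via the Sylvester-determinant expansion and~\eqref{eprohe}. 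Combining (i) and (ii) gives the claim.
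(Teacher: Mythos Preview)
Your approach is genuinely different from the paper's: the paper simply quotes Corollary~3.17 of~\cite{BSS13}, which gives
$\prod_{i=1}^d\DD(K(\beta_i)/K)^{1/[K(\beta_i):K]}\le d^{3d}\Heightp(f)^{2(d-1)}$
for an arbitrary polynomial, and then observes that for $K$-irreducible~$f$ all the factors on the left are equal to $\DD(K(\beta)/K)^{1/d}$, so the product is $\DD(K(\beta)/K)$. That is the entire proof. Your direct route (field discriminant $\le$ polynomial discriminant $\le$ height bound) is in spirit how one would prove the cited result, and is more self-contained; but two steps in your execution do not go through as written.

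First, your numerical claim that ``$(2d-1)!\cdot 2^{2d-1}\cdot d^d$ is far below $d^{3d}$'' is false for every $d\ge 2$: already at $d=2$ the left side is $3!\cdot 2^3\cdot 4=192$ while $2^6=64$, and the gap only widens. So your crude expansion of the Sylvester determinant, combined with the factor-of-$2$ losses from~\eqref{eprohe}, cannot yield the constant $d^{3d}$. One needs a sharper archimedean bound such as Mahler's inequality $|\Disc(f)|\le d^{d}M(f)^{2(d-1)}$ together with $M(f)\le \sqrt{d+1}\,\|f\|_\infty$, which does give a constant comfortably below $d^{3d}$.

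Second, your step~(i) is more delicate than you indicate. The divisibility $\Delta(K(\beta)/K)\mid \Disc(f)\,\OO_K$ is \emph{true} for a primitive $f\in\OO_K[X]$ even when $f$ is not monic, but it does not follow from the order $\OO_K[a_d\beta]$: that order has discriminant $a_d^{(d-1)(d-2)}\Disc(f)$, which would inflate the exponent on $\Heightp(f)$ from $2(d-1)$ to $d(d-1)$. Nor does your ``projective normalization'' remark help, since $\Height(\Disc(f))=\Heightp(1,\Disc(f))$ is not invariant under scaling $f\mapsto cf$. The correct argument uses the order attached to the binary form $Y^{d}f(X/Y)$ (the Birch--Merriman/Nakagawa ring), whose $\OO_K$-basis $1,\,a_d\beta,\,a_d\beta^2+a_{d-1}\beta,\,\ldots$ has discriminant exactly $\Disc(f)$; then $\Delta(K(\beta)/K)\mid\Disc(f)$ follows. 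With these two repairs your outline becomes a valid alternative proof.
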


\begin{proof}
Let ${\beta_1, \ldots, \beta_d}$ be the roots of~$f$. According to Corollary 3.17 from~\cite{BSS13}, 
$$
\prod_{i=1}^d\DD(K(\beta_i)/K)^{1/[K(\beta_i):K]}\le d^{3d}\Heightp(f)^{2(d-1)}. 
$$
However, since~$f$ is $K$-irreducible, all the ${\DD(K(\beta_i)/K)}$ are equal, and all the ${[K(\beta_i):K]}$  are equal to~$d$. Whence the result.
\end{proof}

\begin{lemma}
\label{lhpol}
Let ${F(T,X)\in \bar\Q[T,X]}$ be a polynomial of $T$-degree~$m$, and let ${\alpha\in \bar\Q}$. Then the polynomial ${f(X)=F(\alpha,X)}$ satisfies 
$$
\Heightp(f)\le (m+1)\Heightp(F)\Height(\alpha)^m.
$$
\end{lemma}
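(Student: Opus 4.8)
The statement is a routine height inequality, so the plan is simply to carry out the standard local-global bookkeeping. Write $F(T,X)=\sum_{j=0}^{m}g_j(X)T^j$ with $g_j\in\bar\Q[X]$, say $g_j(X)=\sum_i c_{ij}X^i$, so that the nonzero $c_{ij}$ are exactly the coefficients whose vector defines $\Heightp(F)$. Specializing gives $f(X)=F(\alpha,X)=\sum_{j=0}^m\alpha^j g_j(X)$, so the coefficient of $X^i$ in $f$ is $\sum_{j=0}^m c_{ij}\alpha^j$. (If this produces the zero polynomial the statement is vacuous, so assume $f\neq 0$.)

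Next I would fix a number field $L$ containing $\alpha$ and all the $c_{ij}$, and for each $v\in M_L$ estimate the $v$-adic sup-norm $\|f\|_v:=\max_i\bigl|\sum_j c_{ij}\alpha^j\bigr|_v$ of the coefficient vector of $f$. Setting $\|F\|_v:=\max_{i,j}|c_{ij}|_v$, the ultrametric inequality for ${v\in M_L^0}$ gives $\bigl|\sum_j c_{ij}\alpha^j\bigr|_v\le\max_j|c_{ij}|_v|\alpha|_v^j\le\|F\|_v\max\{1,|\alpha|_v\}^m$, while for ${v\in M_L^\infty}$ the ordinary triangle inequality gives the same bound with an extra factor $m+1$ (there are $m+1$ terms). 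Thus in all cases $\|f\|_v\le(m+1)^{\delta_v}\|F\|_v\max\{1,|\alpha|_v\}^m$, where $\delta_v=1$ for archimedean $v$ and $\delta_v=0$ otherwise.

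Finally I would raise each local inequality to the power $[L_v:\Q_v]/[L:\Q]$ and multiply over all ${v\in M_L}$. The product of $\|f\|_v^{[L_v:\Q_v]/[L:\Q]}$ is $\Heightp(f)$, the product of $\|F\|_v^{[L_v:\Q_v]/[L:\Q]}$ is $\Heightp(F)$, the product of $\max\{1,|\alpha|_v\}^{m[L_v:\Q_v]/[L:\Q]}$ is $\Height(\alpha)^m$, and since $\sum_{v\in M_L^\infty}[L_v:\Q_v]/[L:\Q]=1$ the archimedean contributions combine to a single factor $m+1$. This yields $\Heightp(f)\le(m+1)\Heightp(F)\Height(\alpha)^m$. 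There is no real obstacle here; the only point requiring a little care is tracking that the $(m+1)$-factors appear only at the archimedean places and collapse to a single $m+1$ under the normalized product.
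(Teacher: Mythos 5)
Your proof is correct and follows essentially the same route as the paper: fix a number field $L$ containing $\alpha$ and the coefficients of $F$, bound the local sup-norm of $f$ at each place by $(m+1)^{\delta_v}\|F\|_v\max\{1,|\alpha|_v\}^m$ with the factor $m+1$ only at archimedean places, and take the normalized product over $M_L$. The paper presents exactly this estimate, just more tersely.
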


\begin{proof}
Let~$L$ be a number field containing~$\alpha$ and the coefficients of~$F$. For ${v\in M_L}$ denote by $|F|_v$ and $|f|_v$ the maximal  $v$-value of the coefficients of~$F$, respectively~$f$. We estimate trivially
$$
|f|_v \le 
\begin{cases}
|F|_v\max\{1,|\alpha|_v\}^m, & v\in M_L^0,\\
(m+1)|F|_v\max\{1,|\alpha|_v\}^m, & v\in M_L^\infty.
\end{cases}
$$
Multiplying over ${v\in M_L}$, the result follows.
\end{proof}

\begin{proof}[Proof of Proposition~\ref{pdiscrim}]
For all but finitely many points ${P\in \CC(\bar K)}$ we have 
$$
K(P)=K(t(P),x(P)).
$$ 
In particular, ${K(P)=K(x(P))}$ for all but finitely many points~$P$ satisfying ${t(P)\in K}$.

There exists a polynomial ${F(T,X)\in K[T,X]}$ such that 
$$
F(t,x)=0, \qquad \deg_TF=m, \quad \deg_XF=d.
$$
Now assume that~$P$ satisfies~\eqref{ehyp}, that ${K(P)=K(x(P))}$, and that~$P$ is not a pole of~$t$ and not a pole of~$x$.  Then the polynomial ${f_P(X)=F(t(P),X)}$ is $K$-irreducible, of degree~$d$, and has $x(P)$ as one of its roots. 
Using Lemmas~\ref{ldiscrim} and~\ref{lhpol}, we obtain
$$
\DD(P) \le d^{3d}\Heightp(f_P)^{2(d-1)}\le d^{3d}\bigl((m+1)\Heightp(F)\bigr)^{2(d-1)}\Height(t(P))^{2m(d-1)}.
$$
This proves~\eqref{ediscrim} for all but finitely many~$P$ satisfying~\eqref{ehyp}. By adjusting~$c$ we obtain it for all such~$P$.
\end{proof}

%%%%%%%%%%%%%%%%%%%%%%%%%%%%%%%%%%%%%%%%%%%%%

%%%%%%%%%%%%%%%%%%%%%%%%%%%%%%%%%%%%%%%%%%%%%

\section{Specialization of torsors}
\label{s_torsors}

%%%%%%%%%%%%%%%%%%%%%%%%%%%%%%%%%%%%%%%%%%%%%

In this section we consider a finite flat (not necessarily commutative) $\mathcal{O}_K$-group scheme $\G$. We denote by $H^1_{\et}(\mathcal{O}_K,\G)$ (resp. $H^1_{\fl}(\mathcal{O}_K,\G)$) the cohomology set which classifies {\'e}tale (resp. flat) $\G$-torsors over $\mathcal{O}_K$. We denote by $\G_K$ the generic fibre of $\G$, and by $H^1(K,\G_K)$ the (possibly non-abelian) Galois cohomology set $H^1(\Gal(\bar{K}/K),\G_K(\bar{K}))$.

Let us recall that the ``restriction to the generic fiber'' map $H^1_{\fl}(\mathcal{O}_K,\G)\to H^1(K,\G_K)$ is injective. Indeed, if $\xi$ is a flat $\G$-torsor over $\mathcal{O}_K$, then, by descent theory, $\xi$ is representable by a finite flat $\mathcal{O}_K$-scheme. Therefore, if the generic fiber of $\xi$ has a section, then the valuative criterion of properness implies that $\xi$ itself has a section. By consequence, we have a chain of inclusions
$$
H^1_{\et}(\mathcal{O}_K,\G) \subset H^1_{\fl}(\mathcal{O}_K,\G) \subset H^1(K,\G_K).
$$

%%%%%%%%%%%%%%%%%%%%%%%%%%%%%%%%%%%%%%%%%%%%%

\subsection{Local splitting of torsors}

We now define a set of cohomology classes which are locally trivial at all places in $S$. %that are $S$-split, that is,

\begin{definition}
\label{S_split_def}
If $S$ is a finite set of places of $K$, we let
$$
H^1_{\text{$S$-split}}(\mathcal{O}_K,\G) := \ker\left(H^1_{\fl}(\mathcal{O}_{K,S},\G) \to \prod_{v\in S} H^1_{\fl}(K_v,\G_{K_v})\right).
$$
\end{definition}

\begin{lemma}
\label{S_split_lemma}
The set $H^1_{\text{$S$-split}}(\mathcal{O}_K,\G)$ is a subset of $H^1_{\fl}(\mathcal{O}_K,\G)$. Moreover, if $\G$ is {\'e}tale over $\mathcal{O}_{K,S}$, then it is a subset of $H^1_{\et}(\mathcal{O}_K,\G)$.
\end{lemma}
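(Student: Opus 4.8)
The plan is to use that, by the injectivity of the restriction-to-generic-fibre maps displayed before Definition~\ref{S_split_def} (and its analogue over $\mathcal{O}_{K,S}$), all of $H^1_{\text{$S$-split}}(\mathcal{O}_K,\G)$, $H^1_{\fl}(\mathcal{O}_K,\G)$ and $H^1_{\et}(\mathcal{O}_K,\G)$ sit compatibly inside $H^1(K,\G_K)$. So the statement amounts to the following: a flat $\G$-torsor $\xi$ over $\mathcal{O}_{K,S}$ whose image in $H^1_{\fl}(K_v,\G_{K_v})$ is trivial for every $v\in S$ admits a model over $\mathcal{O}_K$; that is, $\xi_K$ extends to a flat $\G$-torsor over $\mathcal{O}_K$. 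Represent $\xi$ by a finite flat $\mathcal{O}_{K,S}$-algebra $B$ carrying a $\G$-coaction, with $\Spec B$ a torsor, and put $B_K=B\otimes_{\mathcal{O}_{K,S}}K$. Since a finite flat $\mathcal{O}_K$-lattice in $B_K$ is recovered as the intersection of its localisations over the maximal ideals $v$ of $\mathcal{O}_K$, almost all of which are forced to equal $B\otimes_{\mathcal{O}_{K,S}}\mathcal{O}_{(v)}$, the problem reduces to producing, for each finite place $v\in S^0$, an $\mathcal{O}_{(v)}$-lattice $\widetilde B_{(v)}\subset B_K$ that is a subalgebra, is stable under the coaction, and is a $\G_{\mathcal{O}_{(v)}}$-torsor; one then glues by setting $\widetilde B=\bigcap_v\widetilde B_{(v)}$, with $\widetilde B_{(v)}=B\otimes_{\mathcal{O}_{K,S}}\mathcal{O}_{(v)}$ for $v\notin S^0$.

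The construction of $\widetilde B_{(v)}$ for $v\in S^0$ is the heart of the matter and the only place the $S$-split hypothesis enters. The naive candidate, the normalisation of $\mathcal{O}_{(v)}$ in $B_K$, will in general fail to be a $\G$-torsor — either because $\xi_K$ is ramified at $v$ (which is exactly why $S$ had to be enlarged), or because $\G$ is not étale at $v$. Instead, triviality of $\xi$ over the completion $K_v$ yields a $\G$-equivariant isomorphism $B_K\otimes_K K_v\simeq\mathcal{O}(\G)\otimes_{\mathcal{O}_K}K_v$; transporting along it the lattice $\mathcal{O}(\G)\otimes_{\mathcal{O}_K}\mathcal{O}_v$ (the coordinate ring of the trivial $\G_{\mathcal{O}_v}$-torsor) produces an $\mathcal{O}_v$-lattice $\Lambda_v\subset B_K\otimes_K K_v$ that is a subalgebra, is coaction-stable, and is the trivial $\G_{\mathcal{O}_v}$-torsor. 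One then invokes the elementary correspondence between finite $\mathcal{O}_{(v)}$-lattices in $B_K$ and finite $\mathcal{O}_v$-lattices in $B_K\otimes_K K_v$, given by $M\mapsto M\cap B_K$ and $\Lambda\mapsto\overline\Lambda$, which is compatible with the algebra and coaction structures (each compatibility being checked after $\otimes_{\mathcal{O}_{(v)}}\mathcal{O}_v$, by faithful flatness of $\mathcal{O}_{(v)}\to\mathcal{O}_v$), and sets $\widetilde B_{(v)}:=\Lambda_v\cap B_K$. This is nothing but the Beauville--Laszlo gluing theorem for finite flat group schemes over a discrete valuation ring, made concrete.

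Gluing then gives a finite flat $\G$-torsor $\Spec\widetilde B$ over $\mathcal{O}_K$ with $\widetilde B\otimes_{\mathcal{O}_K}\mathcal{O}_{K,S}=B$, so $\xi\in H^1_{\fl}(\mathcal{O}_K,\G)$, which is the first assertion. For the second, assume in addition that $\G$ is étale over $\mathcal{O}_{K,S}$. At a place $v\notin S^0$ the torsor $\Spec\widetilde B\otimes_{\mathcal{O}_K}\mathcal{O}_{(v)}=\Spec B\otimes_{\mathcal{O}_{K,S}}\mathcal{O}_{(v)}$ is a torsor under the étale group $\G$, hence finite étale over $\mathcal{O}_{(v)}$ and in particular trivialised by an étale cover. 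At $v\in S^0$ we have $\widetilde B_{(v)}\otimes_{\mathcal{O}_{(v)}}\mathcal{O}_v=\Lambda_v\simeq\mathcal{O}(\G)\otimes\mathcal{O}_v$, so $\Spec\widetilde B$ becomes trivial over $\mathcal{O}_v^{\mathrm{sh}}$, hence over $\mathcal{O}_{(v)}^{\mathrm{sh}}$ as well (a Henselian local ring and its completion have the same absolute Galois group, and $H^1_{\fl}(\mathcal{O}_{(v)}^{\mathrm{sh}},\G)$ injects into $H^1(\mathrm{Frac}\,\mathcal{O}_{(v)}^{\mathrm{sh}},\G_K)$), and is therefore again trivialised by an étale cover near $v$. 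Thus $\Spec\widetilde B$ is étale-locally trivial at every point of $\Spec\mathcal{O}_K$, so its class lies in the image of $H^1_{\et}(\mathcal{O}_K,\G)\to H^1_{\fl}(\mathcal{O}_K,\G)$, i.e. $\xi\in H^1_{\et}(\mathcal{O}_K,\G)$.

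The main obstacle is the local step at the places of $S^0$: there is no canonical integral model of $\xi_K$ over $\mathcal{O}_{(v)}$, and one must exploit the $S$-split hypothesis — via the trivialisation over $K_v$ — to manufacture the correct (trivial) $\mathcal{O}_v$-model and descend it to $\mathcal{O}_{(v)}$. The remaining ingredients — reconstruction of a lattice over a Dedekind ring from its localisations, stability of the lattice correspondence under the algebra and torsor structures, and the étale-local-triviality bookkeeping — are routine.
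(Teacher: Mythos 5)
Your proof is correct but takes a genuinely different route from the paper's. The paper's argument is essentially a one-liner: it invokes \cite[Corollary~4.2]{Ces14}, which asserts that the square with vertices $H^1_{\fl}(\mathcal{O}_K,\G)$, $H^1_{\fl}(\mathcal{O}_{K,S},\G)$, $\prod_{v\in S} H^1_{\fl}(\mathcal{O}_{K_v},\G)$ and $\prod_{v\in S} H^1(K_v,\G_{K_v})$ is cartesian with injective horizontal maps; given an $S$-split class $\xi$, the pair $(\xi,\text{trivial})$ then lifts uniquely to $H^1_{\fl}(\mathcal{O}_K,\G)$, and the étale refinement follows since the glued torsor is étale over $\mathcal{O}_{K,S}$ and trivial over each $\mathcal{O}_{K_v}$. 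You instead reprove the relevant half of this cartesianness by hand: representing $\xi$ by a finite flat $\mathcal{O}_{K,S}$-algebra $B$, you construct at each $v\in S^0$ the trivial local model $\widetilde B_{(v)}=\Lambda_v\cap B_K$ by transporting $\mathcal{O}(\G)\otimes\mathcal{O}_v$ along the $K_v$-trivialisation and descending through the lattice correspondence, then glue by intersecting over all places. As you note, this is precisely the Beauville--Laszlo gluing that underlies Česnavičius' result; your version is self-contained and makes visible exactly where the $S$-split hypothesis enters (manufacturing the trivial local lattice at the places where the naive normalisation would fail), at the cost of length and of several lattice-compatibility verifications that the citation absorbs. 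A small simplification is available in your étale step: once $\widetilde B\otimes_{\mathcal{O}_K}\mathcal{O}_{(v)}$ is known to be the trivial torsor --- which, with a touch of Artin approximation, follows from triviality over $\mathcal{O}_v$ --- it has a section and is therefore Zariski-locally, hence étale-locally, trivial at $v$; the detour through strict henselisations can be skipped.
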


\begin{proof}
It follows from \cite[Corollary~4.2]{Ces14} that, if $S$ is a finite set of places of $K$, the square
$$
\begin{CD}
H^1_{\fl}(\mathcal{O}_K,\G) @>>> H^1_{\fl}(\mathcal{O}_{K,S},\G) \\
@VVV @VVV \\
\prod_{v\in S} H^1_{\fl}(\mathcal{O}_{K_v},\G) @>>> \prod_{v\in S} H^1(K_v,\G_{K_v}) 
\end{CD}
$$
is cartesian, with injective horizontal maps. Hence the result.
\end{proof}

\begin{theorem}
\label{int_torsors}
Let $K$ be a number field, and let $\G$ be a finite flat (non necessarily commutative) $\mathcal{O}_K$-group scheme. Consider the following setting:
\begin{itemize}
\item
$\CC$ is a (smooth geometrically irreducible projective) curve over~$K$;
\item
$\psi:\tilCC\to \CC$ is a $\G_K$-torsor (where $\tilCC$ is geometrically irreducible);
\item
$A$ is a $K$-rational point of $\CC$ such that $\psi^{-1}(A)$ is the trivial $\G_K$-torsor;
\item
${t:\CC\to\P^1}$ is a finite $K$-morphism, having $A$ as its single zero.
\end{itemize}
Let $S$ be any finite set of places of $K$ which contains the set from Theorem~\ref{thchw}. Let also~$F$ be a finite extension of~$K$.  Then:
\begin{enumerate}
\item[1)] there exists $\eps>0$ such that, for every point ${P\in \CC(\bar{K})}$ satisfying ${t(P)\in K}$ and ${|t(P)|_v<\eps}$ for all ${v\in S}$, the torsor $\psi^{-1}(P)$ belongs to the subset
$$
H^1_{\text{$S$-split}}(\mathcal{O}_{K(P)},\G) \subset H^1(K(P),\G_K),
$$
in particular, $\psi^{-1}(P)$ extends into a flat $\G$-torsor over $\mathcal{O}_{K(P)}$ (or even an {\'e}tale torsor, up to enlarging the set $S$).
\item[2)] there exist infinitely many $P$ as in 1) such that $[K(P):K]=\deg(t)$ and $\psi^{-1}(P)$ is the spectrum of a field which is linearly disjoint from~$F$.
\end{enumerate}
More precisely, there exist positive numbers $c$ and $B_0$ such that, for every $B\geq B_0$, the following holds: among the number fields $K(P)$, where $P$ is as in~2) and $H(t(P))\leq B$, there are at least $cB^\ell/\log B$ distinct fields.
\end{theorem}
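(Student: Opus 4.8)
The plan is to combine the two main ingredients already assembled in the excerpt: the Absolute Chevalley-Weil Theorem (in its torsor form, which is essentially what one gets by running the argument of Theorem~\ref{thany} with the ``Main Lemma'', Lemma~\ref{lmain}) for statement~1), and the counting result of Dvornicich--Zannier (Theorem~\ref{thdvorzann}) for statement~2) and the quantitative tail. First I would fix $S$ to contain the Chevalley-Weil set from Theorem~\ref{thchw}, and produce $\eps>0$ by applying Lemma~\ref{lmain} over each completion $K_v$ for $v\in S$: for $P$ with $t(P)\in K$ and $|t(P)|_v<\eps$ the fibre $\psi^{-1}(P)$, base-changed to $K(P)_w$ for any $w\mid v$, becomes isomorphic to $\psi^{-1}(A)\otimes_K K(P)_w$, which is trivial since $\psi^{-1}(A)$ is the trivial $\G_K$-torsor. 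This shows $\psi^{-1}(P)$, viewed in $H^1(K(P),\G_K)$, lies in the kernel defining $H^1_{\text{$S$-split}}(\OO_{K(P)},\G)$; by Lemma~\ref{S_split_lemma} this places it in $H^1_{\fl}(\OO_{K(P)},\G)$, i.e.\ it spreads out to a flat $\G$-torsor over $\OO_{K(P)}$ (and an \'etale one after enlarging $S$ so that $\G$ is \'etale over $\OO_{K,S}$). That settles 1).

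For 2) I would apply Theorem~\ref{thdvorzann} to the morphism $t:\CC\to\PP^1$, with the set $S$ and the $\eps$ just produced, and with the thin set $\mho$ chosen large enough to kill the ``bad'' fibres. Concretely, take $\mho$ to be the union of: the thin set from Proposition~\ref{pgalo} of $\alpha\in K$ with $t^{-1}(\alpha)$ reducible; the finitely many $\alpha$ corresponding to poles of the auxiliary functions or to the finitely many ``bad'' $P$ excluded along the way; and, crucially, a thin set that forces $K(P)$ to be linearly disjoint from $F$. For the last point I would use the standard trick: if $\beta$ is a primitive element of $F/K$ with minimal polynomial $g$, then $t^{-1}(\alpha)$ having a field of definition not linearly disjoint from $F$ forces $\alpha$ to lie in the image of a proper subcover of $\CC\times_{\PP^1}(\text{a twist by }F)$ over $\PP^1$, which is a thin condition; alternatively one observes directly that only a thin set of $\alpha\in K$ gives $K(P)$ sharing a subfield with $F$, because the relevant fibered product curve maps to $\PP^1$ with degree $>1$ on each non-linearly-disjoint component. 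With this $\mho$, Theorem~\ref{thdvorzann} yields, for $B\ge B_0$, at least $cB^\ell/\log B$ distinct fields $K(P)$ of degree exactly $d=\deg(t)$ over $K$, all linearly disjoint from $F$, arising from $P$ satisfying $t(P)\in K\smallsetminus\mho$ and $|t(P)|_v<\eps$ for $v\in S$ (and automatically $|t(P)|_v<\eps$ is only imposed on $v\in S$; the archimedean places are not needed here, unlike in Theorem~\ref{thany}). Since these $P$ satisfy the hypotheses of part~1), each such $\psi^{-1}(P)$ is $S$-split; and since $K(P)$ has degree $d$ over $K$ and is a field, $\psi^{-1}(P)=\Spec K(P)$ is the spectrum of a field, which we have arranged to be linearly disjoint from $F$. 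The quantitative ``at least $cB^\ell/\log B$'' is then exactly the conclusion of Theorem~\ref{thdvorzann}.

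The main obstacle I expect is the linear-disjointness-from-$F$ requirement: one must check that imposing $K(P)$ linearly disjoint from $F$ removes only a thin set of base points $\alpha=t(P)\in K$, and that this thin set can be absorbed into the $\mho$ fed to Theorem~\ref{thdvorzann} without disturbing anything. The clean way is to note that $K(P)$ fails to be linearly disjoint from $F$ only if $K(P)\cap F\neq K$, hence only if $K(P)$ contains one of the finitely many subfields $K\subsetneq K'\subseteq F$; for fixed such $K'$, the set of $\alpha\in K$ with $t^{-1}(\alpha)$ generating a field containing $K'$ is the image under the natural map $\CC'\to\PP^1$ of the $K$-points of a curve $\CC'$ (a component of $\CC\times_K K'$, or of a suitable fibre product), and this map has degree $\ge 2$ unless the component is $\PP^1$-isomorphic to the base, which is exactly the linearly disjoint case; so the bad $\alpha$ form a basic thin set, and a finite union over the finitely many $K'$ is again thin. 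Everything else — the passage from $S$-split to flat/\'etale torsor, the degree-$d$ and field conditions, and the counting bound — is then a direct citation of the lemmas and theorems already established, with no new calculation required.
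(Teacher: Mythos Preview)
Your argument for part~1) is essentially identical to the paper's: apply the Main Lemma over each completion $K_v$ with $v\in S$, take the minimum of the resulting $\eps_v$, and invoke Lemma~\ref{S_split_lemma}. No issue there.

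Part~2), however, contains a genuine confusion. You write ``since $K(P)$ has degree $d$ over $K$ and is a field, $\psi^{-1}(P)=\Spec K(P)$ is the spectrum of a field''. But $\psi^{-1}(P)$ is \emph{not} $\Spec K(P)$: it is the fibre of $\psi:\tilCC\to\CC$ over $P$, a $\G_K$-torsor over $\Spec K(P)$, hence $\Spec$ of a $K(P)$-algebra of dimension $\deg\psi$. Requiring $\psi^{-1}(P)$ to be the spectrum of a field means requiring the $\tilt$-fibre (where $\tilt=t\circ\psi$) over $t(P)$ to be $K$-irreducible, i.e.\ that there is a single $Q\in\tilCC(\bar K)$ above $P$ with $[K(Q):K(P)]=\deg\psi$. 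And the field that must be linearly disjoint from $F$ is $K(Q)$, not $K(P)$. Your thin set $\mho$, built only from conditions on $t$-fibres and on $K(P)$, controls neither of these.

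The paper fixes this by defining $\mho$ in terms of $\tilt=t\circ\psi$ rather than $t$: one takes $\mho=\mho_1\cup\mho_2$ where $\mho_1$ is the set of $\alpha\in K$ with $\tilt^{-1}(\alpha)$ reducible (thin by Proposition~\ref{pgalo}), and $\mho_2$ is the set of $\alpha=\tilt(Q)\in K$ with $K(Q)$ not linearly disjoint from $F$. The latter is thin because, $\tilCC$ being geometrically irreducible, $\deg\tilt$ is unchanged over $F$, and $K(Q)$ failing to be linearly disjoint from $F$ forces $[F(Q):F]<\deg\tilt$; so $\mho_2$ is thin in $F$, hence in $K$ by Remark~\ref{rthin}. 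One then applies Theorem~\ref{thdvorzann} to $t$ with this $\mho$: for $P$ with $t(P)\notin\mho$ one has $\tilt(Q)=t(P)\notin\mho$ for every $Q$ above $P$, which forces $[K(Q):K]=\deg\tilt$ and $K(Q)$ linearly disjoint from $F$, giving exactly what statement~2) demands.
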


\begin{remark}
In general, given a finite $K$-group scheme $G$, it may not be possible to extend $G$ into a finite flat group scheme over $\mathcal{O}_K$, and, in case such a group scheme exists, it may not be unique. Nevertheless, if the set $S$ contains all places dividing the order of $G$, there exists at most one way to extend $G$ into a finite flat (automatically {\'e}tale) group scheme over $\mathcal{O}_{K,S}$. In the statement above, what really matters is the fact that the group scheme $\G_K$ can be extended into a finite flat group scheme $\G$ over $\mathcal{O}_K$, not the choice of $\G$.
\end{remark}

\begin{proof}
Let $\psi:\tilCC\to\CC$ be a $\G_K$-torsor. Then there exists a finite set $S$ of finite places of $K$ such that $\psi$ extends into a $\G$-torsor $\tilde{C}\to C$ between $\mathcal{O}_{K,S}$-schemes, where $C$ is a smooth projective model of $\CC$ over $\mathcal{O}_{K,S}$. We note that such an $S$ contains places of bad reduction of the curve $\CC$, and also additional places that we may consider as being places where $\psi$ has bad reduction. Up to enlarging $S$, we may assume that $\G$ is {\'e}tale over $\mathcal{O}_{K,S}$.

By projectivity of $C$ over $\mathcal{O}_{K,S}$, any point $P\in\CC(\bar{K})$ can be extended into a section $\Spec(\mathcal{O}_{K(P),S'})\to C$, where $S'$ is the set of places of $K(P)$ that lie above places in $S$. It follows that, for any $P\in\CC(\bar{K})$, $\psi^{-1}(P)$ belongs to the subset
$$
H^1_{\et}(\mathcal{O}_{K(P),S'},\G) \subset H^1(K(P),\G_K).
$$

Let us fix a place $v$ of $K$. Applying Lemma~\ref{lmain} over the completion~$K_v$, we find that there exists ${\eps_v>0}$ such that, for every ${P\in \CC(\bar{K_v})}$ satisfying $|t(P)|_v<\eps_v$, we have
$$
\psi^{-1}(P)\otimes_{K(P)} K_v(P) \simeq \psi^{-1}(A)\otimes_K K_v(P)
$$
where $K_v(P)$ is the smallest extension of $K_v$ over which $P$ is defined. This means that $\psi^{-1}(P)$ is the trivial torsor over $K_v(P)$, because by hypothesis $\psi^{-1}(A)$ is the trivial torsor.

Now, if we consider a point $P\in \CC(\bar{K})$ and a place $w$ of $K(P)$ lying above $v$, then $K(P)_w$ contains $K_v(P)$ as a subfield, hence if $|t(P)|<\eps_v$ then $\psi^{-1}(P)$ becomes trivial over $K(P)_w$.

If we set ${\eps=\min\{\eps_v: v\in S\}}$, then $\psi^{-1}(P)$ belongs to $H^1_{\text{$S$-split}}(\mathcal{O}_{K(P)},\G)$ for every ${P\in \CC(\bar{K})}$ satisfying ${t(P)\in K}$ and ${|t(P)|_v<\eps}$ for all ${v\in S}$. This completes the proof of the first (qualitative) statement. Let us finally note that, according to Lemma~\ref{S_split_lemma}, for such $P$ the torsor $\psi^{-1}(P)$ extends into an {\'e}tale $\G$-torsor over $\mathcal{O}_{K(P)}$.

To prove the second (quantitative) statement, we apply Theorem~\ref{thdvorzann}. Set ${\tilt=t\circ \psi}$ and define the set ${\mho\subset K}$ as follows: ${\mho=\mho_1\cup\mho_2}$, where 
\begin{align*}
\mho_1&=\{\tilt(Q): Q\in \tilCC(\bar K), \ \tilt(Q) \in K, \ [K(Q):K]<\deg\tilt\},\\
\mho_2&=\{\tilt(Q): Q\in \tilCC(\bar K), \ \tilt(Q) \in K, \ \text{$K(Q)$ is not linearly disjoint from~$F$}\}.
\end{align*}
Proposition~\ref{pgalo} implies that~$\mho_1$ is a thin set in~$K$. The set~$\mho_2$ is thin in~$K$ as well. Indeed, since~$\tilCC$ is geometrically irreducible, the degree of~$\tilt$ does not change if we extend the base field from~$K$ to~$F$. But, if $K(Q)$ is not linearly disjoint from~$F$, then 
$$
[F(Q):F]<[K(Q):K]\le \deg \tilt,
$$
which implies that~$\mho_2$ is thin in~$F$. Remark~\ref{rthin} implies that~$\mho_2$ is thin in~$K$ as well.
Thus, the set~$\mho$ is thin.

Theorem~\ref{thdvorzann} implies that there exists a  positive number~$c$ such that, for sufficiently large~$B$, among the number fields $K(P)$ where  ${P\in \CC(\bar K)}$ satisfies
\begin{align*}
t(P)&\in K\smallsetminus\mho,\\
\label{eteps}
|t(P)|_v&<\eps \qquad (v\in S),\\
%\label{eheb}
\Height(t(P))&\le B, 
\end{align*}
there are at least ${cB^\ell/\log B}$ distinct fields of degree~$d$ over~$K$. Moreover, if~$P$ is one of these points, then, by definition of our set~$\mho$, for any ${Q\in \psi^{-1}(P)}$ we have ${[K(Q):K(P)]=\deg(\psi)}$ and $[K(P):K]=\deg(t)$, which means that~$P$ satisfies 2). The theorem is proved. 
\end{proof}

%%%%%%%%%%%%%%%%%%%%%%%%%%%%%%%%%%%%%%%%%%%%%

%%%%%%%%%%%%%%%%%%%%%%%%%%%%%%%%%%%%%%%%%%%%%

\subsection{From finite subgroups of Jacobians to finite covers}

In this section we consider a commutative finite flat $\mathcal{O}_K$-group scheme $\G$, whose generic fiber $\G_K$ becomes constant cyclic of order $n$ over $\bar{K}$. Typical examples are $\Z/n\Z$ (the constant group scheme) and $\mu_n$ (the group scheme of $n$-th roots of unity). We denote by $\G^D$ the Cartier dual of $\G$.

Let $\CC$ be a curve over $K$, with a given $K$-rational point $A$, and let $J(\CC)$ be the Jacobian of $\CC$. The point $A$ gives rise to an embedding $i_A:\CC\to J(\CC)$ such that $i_A(A)=0$.

\begin{lemma}
\label{finite_covers}
Assume that $J(\CC)$ contains a subgroup scheme isomorphic to $(\G^D_K)^r$ for some integer $r\geq 1$. Then there exists $\G_K$-torsors
$$
\psi_1:X_1\to\CC,\dots,\psi_r:X_r\to \CC
$$
such that:
\begin{enumerate}
\item $\psi_i^{-1}(A)$ is the trivial torsor for all $i$;
\item the $\psi_i$ generate a subgroup of $H^1(\CC,\G_K)$ isomorphic to $(\Z/n\Z)^r$;
\item the fiber product $X_1\times_{\CC}\dots\times_{\CC} X_r$ is a geometrically irreducible curve.
\end{enumerate}
\end{lemma}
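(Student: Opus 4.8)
The plan is to translate the subgroup-scheme hypothesis into a statement about torsors over $\CC$ via the Weil–Barsotti (or Kummer-type) description of $H^1$, and then ensure geometric irreducibility of the total fiber product by a Galois-theoretic genericity argument. First I would recall that, since $\G_K$ becomes constant cyclic of order $n$ over $\bar K$, the theory of the generalized Jacobian together with the embedding $i_A:\CC\to J(\CC)$ gives a canonical isomorphism between $\Hom_{\bar K\text{-gp}}$(into $J(\CC)$) of $(\G^D_K)^r$ and a subgroup of $H^1(\CC,\G_K)$ of torsors that are trivial over~$A$; concretely, pulling back the universal $\G_K$-torsor over $J(\CC)$ (dual to the inclusion $(\G^D_K)^r\hookrightarrow J(\CC)$) along $i_A$ produces $\psi_1,\dots,\psi_r$. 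Because $i_A(A)=0$, each $\psi_i^{-1}(A)$ is the pullback of a torsor over a point, hence is the trivial torsor; this gives property~(1). Property~(2) follows because the $r$ morphisms $(\G^D_K)\hookrightarrow J(\CC)$ are independent (the image is $(\G^D_K)^r$), and the Kummer-type pairing is nondegenerate, so the classes $\psi_i$ in $H^1(\CC,\G_K)$ are $\Z/n\Z$-linearly independent and generate a copy of $(\Z/n\Z)^r$; I would phrase this over $\bar K$ first (where $\G_K$ is constant, so $H^1(\CC_{\bar K},\Z/n\Z)\subset \Hom(\pi_1,\Z/n\Z)$) and then note that the classes are already defined over~$K$.

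For property~(3), the geometric irreducibility of $X:=X_1\times_\CC\cdots\times_\CC X_r$, I would work over $\bar K$: the fiber product is connected (equivalently irreducible, since each $X_i$ is smooth) if and only if the subgroup $H\le \Hom(\pi_1^{\et}(\CC_{\bar K}),\Z/n\Z)$ generated by the classes of the $\psi_i$ has the property that the corresponding quotient of $\pi_1$ is the full $(\Z/n\Z)^r$, i.e. $H$ has rank exactly $r$ as a subgroup of $\Hom(\pi_1,\Z/n\Z)$. But that is exactly property~(2) (interpreted over $\bar K$), so in fact (2) already forces (3): the cover $X\to\CC$ is the $(\Z/n\Z)^r$-cover attached to the surjection $\pi_1^{\et}(\CC_{\bar K})\twoheadrightarrow (\Z/n\Z)^r$ dual to $H$, and such a cover is connected precisely because the map is surjective. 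A clean way to see the surjectivity is to observe that the composite $(\G^D_K)^r\hookrightarrow J(\CC)$, together with the fact that $i_A^*$ on $H^1$ is injective on the subgroup of $n$-torsion coming from $J(\CC)[n]$ when $\CC$ has a rational point (a standard consequence of the autoduality of the Jacobian and the Weil pairing), shows the $r$ characters of $\pi_1$ are linearly independent, hence jointly surjective onto $(\Z/n\Z)^r$.

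The main obstacle I expect is the precise bookkeeping in identifying $i_A^*$ on torsor classes with the Weil-pairing duality between $J(\CC)[n]$ and $H^1(\CC,\mu_n)$-type groups in the possibly twisted (non-constant $\G_K$) setting, and making sure everything descends correctly from $\bar K$ to $K$: the torsors $\psi_i$ must be genuinely defined over~$K$ (which follows because the subgroup scheme $(\G^D_K)^r\le J(\CC)$ is a $K$-subgroup-scheme, so the dual construction is $K$-rational), while the irreducibility statement is only needed geometrically and so may be checked over $\bar K$. A secondary technical point is handling the case $\G_K=\mu_n$ versus $\G_K=\Z/n\Z$ uniformly; I would do this by passing to $\bar K$, where both become $\Z/n\Z$, reducing geometric irreducibility to the étale-fundamental-group computation above, and only invoking the Cartier-dual formalism to get the $K$-rationality of the $\psi_i$ and the identification in~(2).
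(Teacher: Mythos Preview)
Your proposal is correct and follows essentially the same route as the paper: construct each $\psi_i$ by pulling back along $i_A$ the $\G_K$-torsor dual to an embedding $\G^D_K\hookrightarrow J(\CC)$ (the paper phrases this as taking the dual isogeny $\theta^t:B^t\to J(\CC)^t\simeq J(\CC)$ and pulling back), and then argue independence and geometric irreducibility. The only cosmetic difference is that the paper organizes the independence statement via the Leray spectral sequence identification $\ker\bigl(A^*:H^1(\CC,\G_K)\to H^1(K,\G_K)\bigr)\simeq \Hom(\G^D_K,J(\CC))$, whereas you invoke the Weil--Barsotti/Kummer formalism and then check irreducibility over $\bar K$ via $\pi_1^{\et}$; both amount to the same thing.
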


\begin{proof}
Let us fix an embedding $\G^D_K\to J(\CC)$. Let $B$ be the quotient abelian variety $J(\CC)/\G^D_K$, and let $\theta:J(\CC)\to B$ be the isogeny with kernel $\G^D_K$. This isogeny gives rise to a dual isogeny $\theta^t:B^t\rightarrow J(\CC)^t$ where $B^t$ is the dual abelian variety of $B$. By duality of abelian varieties, the kernel of $\theta^t$ is the Cartier dual of the kernel of $\theta$, hence is isomorphic to $\G_K$. By auto-duality of the Jacobian, we have $J(\CC)^t\simeq J(\CC)$. Let $\psi:X\to \CC$ be the morphism defined by the following cartesian square (or pull-back)
$$
\begin{CD}
X @>>> B^t \\
@V\psi VV @VV\theta^tV \\
\CC @>i_A>> J(\CC) \\
\end{CD}
$$
then $\psi$ is a $\G_K$-torsor, because $\theta^t$ is, and $\psi^{-1}(A)$ is the trivial torsor, because $(\theta^t)^{-1}(0)$ is.

Because $\CC(K)\neq \emptyset$, the Leray spectral sequence (see \cite[expos\'e V, \S{}3]{gro4t2}) associated to $\CC\to \Spec(K)$ and $\G_K$ gives us a short exact sequence
\begin{equation}
\label{spectral_tors}
0 \longrightarrow H^1(K,\G_K)  \longrightarrow H^1(\CC,\G_K)  \longrightarrow \Hom(\G_K^D,J(\CC)) \longrightarrow 0
\end{equation}
More precisely, the right hand side map above induces a bijection
$$
\ker\left(A^*:H^1(\CC,\G_K)\to H^1(K,\G_K)\right) \longrightarrow \Hom(\G_K^D,J(\CC))
$$
Indeed, according to the ker-coker Lemma, the section $A:\Spec(K)\to \CC$ gives rise to an isomorphism between the cokernel of the map $H^1(K,\G_K) \to H^1(\CC,\G_K)$ and the kernel of the map $A^*:H^1(\CC,\G_K)\to H^1(K,\G_K)$.

If we start from an injective morphism $\G_K^D\to J(\CC)$, we get from the construction above a geometrically irreducible $\G_K$-torsor $X\to \CC$ which belongs to $\ker(A^*)$. By hypothesis, we have $r$ independent injections $\G_K^D\to J(\CC)$, which generate a subgroup isomorphic to $(\Z/n\Z)^r$ in $\Hom(\G_K^D,J(\CC))$. Therefore, we obtain $r$ torsors $\psi_1,\dots \psi_r$ which generate a subgroup isomorphic to $(\Z/n\Z)^r$ in $\ker\left(A^*:H^1(\CC,\G_K)\to H^1(K,\G_K)\right)$. Their fiber product $X_1\times_{\CC}\dots\times_{\CC} X_r$ is a geometrically irreducible $(\G_K)^r$-torsor.
\end{proof}

\begin{theorem}
\label{main_application}
Let $\CC$ be a curve over $K$, and let $J(\CC)$ be the Jacobian of $\CC$. Let ${t:\CC\to\P^1}$ be a finite $K$-morphism  which is totally ramified at some $K$-rational point of $\CC$.% (this condition is satisfied as soon as $\CC(K)\neq\emptyset$).

Let $\G$ be a finite flat $\mathcal{O}_K$-group scheme, whose generic fiber $\G_K$ is isomorphic to the cyclic group $\Z/n\Z$ over $\bar{K}$, and let $r\geq 1$ be an integer such that $J(\CC)$ contains a subgroup scheme isomorphic to $(\G_K^D)^r$.

Then there exist an infinity of number fields $L/K$ with $[L:K]=\deg(t)$ such that the natural map
\begin{equation}
\label{einjmap}
H^1_{\fl}(\mathcal{O}_K,\G)\longrightarrow H^1_{\fl}(\mathcal{O}_L,\G)
\end{equation}
is injective, and satisfies
\begin{equation}
\label{esatisfiessomethingIdonotknowwhat}
\rank_n H^1_{\fl}(\mathcal{O}_L,\G)/H^1_{\fl}(\mathcal{O}_K,\G) \geq r.
\end{equation}
\end{theorem}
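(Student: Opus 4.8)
The plan is to combine Lemma~\ref{finite_covers} with Theorem~\ref{int_torsors}, using the short exact sequence~\eqref{spectral_tors} to transport the ``large rank'' information from the Jacobian to the class-group-type object $H^1_{\fl}(\mathcal{O}_L,\G)$. First I would invoke Lemma~\ref{finite_covers}: since $J(\CC)$ contains a subgroup scheme isomorphic to $(\G_K^D)^r$, we obtain $\G_K$-torsors $\psi_1:X_1\to\CC,\dots,\psi_r:X_r\to\CC$ whose fiber product $\psi:\tilCC:=X_1\times_{\CC}\cdots\times_{\CC}X_r\to\CC$ is a geometrically irreducible $(\G_K)^r$-torsor, with $\psi_i^{-1}(A)$ trivial for each $i$ (so $\psi^{-1}(A)$ is the trivial $(\G_K)^r$-torsor), where $A\in\CC(K)$ is the point at which $t$ is totally ramified. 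Thus $t$ has $A$ as its single zero, and the set-up of Theorem~\ref{int_torsors} is met for the group scheme $\G^r$ (which extends $(\G_K)^r$ over $\mathcal{O}_K$) and the torsor $\psi$.

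Next I would apply Theorem~\ref{int_torsors} to $\psi:\tilCC\to\CC$, the group scheme $\G^r$, the point $A$, the function $t$, and a suitable finite extension $F/K$ — namely one large enough that linear disjointness of $K(P)$ from $F$ forces the required independence (see below). Theorem~\ref{int_torsors} produces $\eps>0$, and then infinitely many points $P\in\CC(\bar K)$ with $t(P)\in K$, $|t(P)|_v<\eps$ for all $v\in S$, with $[K(P):K]=\deg(t)$, with $K(P)$ linearly disjoint from $F$, and such that each $\psi^{-1}(P)$ (equivalently each $\psi_i^{-1}(P)$) lies in $H^1_{\text{$S$-split}}(\mathcal{O}_{K(P)},\G)\subset H^1_{\et}(\mathcal{O}_{K(P)},\G)\subset H^1_{\fl}(\mathcal{O}_{K(P)},\G)$. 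Set $L=K(P)$. This already gives $[L:K]=\deg(t)$ and infinitely many such $L$, and the quantitative bound $cB^\ell/\log B$ is inherited directly from Theorem~\ref{int_torsors}.

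It remains to verify that~\eqref{einjmap} is injective and that~\eqref{esatisfiessomethingIdonotknowwhat} holds. For injectivity of $H^1_{\fl}(\mathcal{O}_K,\G)\to H^1_{\fl}(\mathcal{O}_L,\G)$: a class killed by restriction to $L$ is killed by restriction to the generic fiber $H^1(L,\G_K)$, and since $[L:K]=\deg(t)$ is coprime to... — here one must be slightly careful, as $\deg(t)$ need not be coprime to $n$. The right argument is instead to pick the auxiliary field $F$ to contain a splitting field of each nonzero class in the finite set $H^1_{\fl}(\mathcal{O}_K,\G)$; then linear disjointness of $L$ from $F$ forces the restriction to $L$ of any nonzero class of $H^1_{\fl}(\mathcal{O}_K,\G)$ to remain nonzero, giving injectivity of~\eqref{einjmap}. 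For~\eqref{esatisfiessomethingIdonotknowwhat}: the $r$ classes $\xi_i:=[\psi_i^{-1}(P)]\in H^1_{\fl}(\mathcal{O}_L,\G)$ generate, via the sequence~\eqref{spectral_tors} for $\CC$ pulled back along $P:\Spec L\to\CC$, a subgroup isomorphic to $(\Z/n\Z)^r$ — because the $\psi_i$ themselves generate such a subgroup in $\ker(A^*)\subset H^1(\CC,\G_K)$ and specialization at $P$ is compatible with the exact sequence, the geometrically irreducible fiber product ensuring no collapse; the point $A$ with $\psi^{-1}(A)$ trivial guarantees these classes land in the ``$\mathcal{O}_L$-integral'' part. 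One checks the $\xi_i$ remain independent modulo the image of $H^1_{\fl}(\mathcal{O}_K,\G)$ again by enlarging $F$ to absorb that finite group. Hence $\rank_n H^1_{\fl}(\mathcal{O}_L,\G)/H^1_{\fl}(\mathcal{O}_K,\G)\geq r$.

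\textbf{Main obstacle.} The delicate point is the independence claim~\eqref{esatisfiessomethingIdonotknowwhat}: one must show the specialized torsors $\xi_1,\dots,\xi_r$ are $\Z/n\Z$-independent \emph{in the quotient} $H^1_{\fl}(\mathcal{O}_L,\G)/H^1_{\fl}(\mathcal{O}_K,\G)$, not merely in $H^1(L,\G_K)$. The clean way is to package all the ``bad'' conditions — a $\Z/n\Z$-relation among the $\xi_i$ modulo $H^1_{\fl}(\mathcal{O}_K,\G)$ — as the condition that $L=K(P)$ fails to be linearly disjoint from a single sufficiently large number field $F$ (built from the splitting fields attached to $H^1_{\fl}(\mathcal{O}_K,\G)$ and to the generic Galois action on the $(\G_K)^r$-torsor $\tilCC$), and then feed exactly this $F$ into part~2) of Theorem~\ref{int_torsors}. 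Making $F$ explicit and checking that linear disjointness genuinely rules out every relation is where the real work lies; everything else is formal.
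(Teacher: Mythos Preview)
Your proposal is correct and follows essentially the same route as the paper: invoke Lemma~\ref{finite_covers}, apply Theorem~\ref{int_torsors} to the $(\G_K)^r$-torsor $\Psi:X_1\times_\CC\cdots\times_\CC X_r\to\CC$, and take $F$ to be the compositum of the splitting fields $K(\xi)$ for $\xi\in H^1_{\fl}(\mathcal{O}_K,\G)$ (finite by Hermite--Minkowski). The ``main obstacle'' you flag is in fact lighter than you suggest: Theorem~\ref{int_torsors}(2) already hands you that $\Psi^{-1}(P)$ is the spectrum of a \emph{field} linearly disjoint from $F$, so the $\xi_i$ generate $(\Z/n\Z)^r$ in $H^1(K(P),\G_K)$ directly (no need for~\eqref{spectral_tors}), and linear disjointness from $F$ simultaneously gives injectivity of~\eqref{einjmap} and trivial intersection with its image.
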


The ``infinity'' in this theorem can be made quantitative as follows. 

\begin{theorem}
\label{thquant_ma}
Let ${t\in K(\CC)}$ be the rational function defining  the $K$-morphism ${\CC\to \PP^1}$ appearing in Theorem~\ref{main_application}. 
Assume that there exists a rational function ${x\in K(\CC)}$ of degree~$m$ such that ${K(\CC)=K(t,x)}$. Then, for sufficiently large positive~$X$, there are at least $cX^{\ell/2m(d-1)}/\log X$
number  fields~$L/K$  with  ${[L:K]=d}$, ${\DD(L/K)\le X}$ and such that the map~\eqref{einjmap} is injective and satisfies~\eqref{esatisfiessomethingIdonotknowwhat}. Here ${\ell=[K:\Q]}$ and ${c>0}$ depends on~$\CC$,~$t$,~$x$ and~$K$. 
\end{theorem}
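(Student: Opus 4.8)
The plan is to combine the qualitative Theorem~\ref{main_application} with the quantitative machinery assembled in Sections~\ref{s_DZ} and~\ref{s_torsors}, exactly as Theorem~\ref{thquant} combines Theorem~\ref{thnew} with Theorem~\ref{thdvorzann} and Proposition~\ref{pdiscrim}. Concretely, the fields $L/K$ produced by Theorem~\ref{main_application} are of the form $L=K(P)$, where $P$ runs through the points supplied by Theorem~\ref{int_torsors}~2) applied to the $(\G_K)^r$-torsor $\psi\colon X_1\times_\CC\cdots\times_\CC X_r\to\CC$ built in Lemma~\ref{finite_covers}, with $t$ the given function (totally ramified over a $K$-rational point $A$, so having $A$ as its single zero after the harmless normalization). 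First I would recall that the injectivity of~\eqref{einjmap} and the rank bound~\eqref{esatisfiessomethingIdonotknowwhat} hold for \emph{every} $L=K(P)$ with $P$ as in Theorem~\ref{int_torsors}~1)--2): the $S$-split property forces $\psi_i^{-1}(P)$ to lie in $H^1_{\text{$S$-split}}(\mathcal{O}_L,\G)\subset H^1_{\fl}(\mathcal{O}_L,\G)$ by Lemma~\ref{S_split_lemma}, the linear-disjointness from a suitable field $F$ (chosen so that the $\psi_i^{-1}(P)$ stay independent modulo the image of $H^1_{\fl}(\mathcal{O}_K,\G)$ and so that~\eqref{einjmap} is injective) gives the rank~$\geq r$, and $[L:K]=\deg(t)=d$. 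So the only thing left is to \emph{count} such $L$ with a discriminant bound.

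Next I would invoke Theorem~\ref{int_torsors} itself, whose quantitative clause already asserts that among the fields $K(P)$ with $P$ as in~2) and $\Height(t(P))\le B$ there are at least $cB^\ell/\log B$ distinct ones, for $B\ge B_0$. It then remains to translate the height bound $\Height(t(P))\le B$ into the discriminant bound $\DD(L/K)\le X$. This is precisely Proposition~\ref{pdiscrim}: since $P$ satisfies $t(P)\in K$ and $[K(P):K]=d$, and since by hypothesis $K(\CC)=K(t,x)$ with $\deg x=m$, we have $\DD(P)\le c'\Height(t(P))^{2m(d-1)}$. Hence, setting $X=c'B^{2m(d-1)}$, i.e.\ $B=(X/c')^{1/(2m(d-1))}$, every one of the $\ge cB^\ell/\log B$ fields counted above satisfies $\DD(L/K)\le X$, and $B^\ell/\log B$ is, up to adjusting the constant, $\gg X^{\ell/(2m(d-1))}/\log X$. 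Adjusting $c$ and $X_0$ at the end (as in the proofs of Theorem~\ref{thdvorzann} and Corollary~\ref{cdvorzannbb}) absorbs the finitely many exceptional $P$ excluded in Proposition~\ref{pdiscrim} and the change of variables.

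The only genuinely delicate point is making sure that the thin set $\mho$ fed into Theorem~\ref{int_torsors}~2) is chosen so that \emph{both} the independence of the classes $\psi_i^{-1}(P)$ and the injectivity of~\eqref{einjmap} are guaranteed for all the counted $P$; but this is handled inside the proof of Theorem~\ref{int_torsors} (via $\mho_1,\mho_2$ and the auxiliary field $F$), so from the present vantage point it is already available. Thus the proof is short: one merely chains Theorem~\ref{int_torsors} (applied to the torsor of Lemma~\ref{finite_covers}, with $\G$ replaced by $\G^r$) and Proposition~\ref{pdiscrim}, and performs the substitution $B\mapsto X^{1/(2m(d-1))}$, exactly mirroring the deduction of Theorem~\ref{thquant}. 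I do not expect any real obstacle beyond bookkeeping of the constants and the routine check that the excluded finite sets of points do not affect the asymptotics.
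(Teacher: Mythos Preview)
Your proposal is correct and follows essentially the same approach as the paper: invoke the quantitative clause of Theorem~\ref{int_torsors} to produce at least $cB^\ell/\log B$ distinct fields $K(P)$ with $\Height(t(P))\le B$, then apply Proposition~\ref{pdiscrim} to convert the height bound into $\DD(L/K)\le c'B^{2m(d-1)}$ and substitute $B=X^{1/(2m(d-1))}$. The paper's proof is just a terse three-sentence version of what you wrote.
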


\begin{proof}[Proof of Theorem~\ref{main_application}]
The strategy of the proof is that of Theorem~2.4. of \cite{GL12}. According to Lemma~\ref{finite_covers}, we are able to choose $r$ torsors
$$
\psi_1:X_1\to\CC,\dots,\psi_r:X_r\to \CC
$$
such that $\psi_i^{-1}(A)$ is trivial for all $i$, and the fiber product $X_1\times_{\CC}\dots\times_{\CC} X_r$ is geometrically irreducible.

We apply Theorem~\ref{int_torsors} to $\Psi:X_1\times_{\CC}\dots\times_{\CC} X_r\to \CC$, which is a geometrically irreducible $\G_K^r$-torsor such that $\Psi^{-1}(A)$ is the trivial torsor. This proves the existence of infinitely many $P\in \CC(\bar{K})$ with $[K(P):K]=\deg(t)$ such that $\Psi^{-1}(P)$ is the spectrum of a field and extends into a flat $\G^r$-torsor over $\mathcal{O}_{K(P)}$. Then, for such points $P$, the torsors $\psi_1^{-1}(P),\dots, \psi_r^{-1}(P)$ can be extended into flat $\G$-torsors over $\mathcal{O}_{K(P)}$, and these torsors generate a subgroup isomorphic to $(\Z/n\Z)^r$ in $H^1(K(P),\G_K)$, hence:
$$
\rank_n H^1_{\fl}(\mathcal{O}_{K(P)},\G) \geq r.
$$

Furthermore, according to Theorem~\ref{int_torsors}, there are infinitely many isomorphism classes among the fields $K(P)$, and we may impose the additional requirement that $\Psi^{-1}(P)$ is the spectrum of a field which is linearly disjoint from a given finite extension $F/K$. Let us now choose $F/K$ to be the compositum of all the fields $K(\xi)$, where $\xi$ runs through $H^1_{\fl}(\mathcal{O}_K,\G)$; this is a finite extension, because the group $H^1_{\fl}(\mathcal{O}_K,\G)$ is finite, according to  Hermite-Minkowski's Theorem. We note that all points of $\G_K$ are defined over $F$.
The map $H^1_{\fl}(\mathcal{O}_K,\G)\to H^1_{\fl}(\mathcal{O}_{K(P)},\G)$ is injective because, $K(P)$ being linearly disjoint from $F$, a nontrivial $\G$-torsor over $\mathcal{O}_K$ cannot acquire a point over $K(P)$. Moreover, the image of $H^1_{\fl}(\mathcal{O}_K,\G)\to H^1_{\fl}(\mathcal{O}_{K(P)},\G)$ has trivial intersection with the subgroup generated by the $\psi_i^{-1}(P)$, because the compositum of the corresponding fields are linearly disjoint. Hence the result.
\end{proof}

\begin{remark}
\label{ramif_infinity}
In the statement of Theorem~\ref{int_torsors}, it is possible to enlarge the set $S$ as one likes. Therefore, Theorem~\ref{main_application} still holds when replacing the flat cohomology groups by the {\'e}tale ones, and more generally by the groups $H^1_{\text{$S$-split}}$ of $S$-split classes. In particular, one may impose the additional requirement that the torsors are unramified at infinite places.
\end{remark}

\begin{proof}[Proof of Theorem~\ref{thquant_ma}]
Our fields~$L$ occur as $K(P)$, where the points~$P$ are produced by a suitable special case of Theorem~\ref{int_torsors}. The ``quantitative statement'' of the latter theorem implies that, for large positive~$B$, there are at least $cB^\ell/\log B$ distinct fields among such $K(P)$ with ${\Height(P)\le B}$. Proposition~\ref{pdiscrim} implies now that for every such ${L=K(P)}$ we have ${\DD(L/K)\le c'B^{2m(d-1)}}$. Setting here ${B=X^{1/2m(d-1)}}$, we obtain the result. 
\end{proof}

%%%%%%%%%%%%%%%%%%%%%%%%%%%%%%%%%%%%%%%%%%%%%

%%%%%%%%%%%%%%%%%%%%%%%%%%%%%%%%%%%%%%%%%%%%%

\subsection{Proof of Theorems~\ref{thold},~\ref{thnew} and~\ref{thquant}}
\label{ssproofs}

If $A$ is a finite abelian group, we denote by $A^{\vee}:=\Hom(A,\Q/\Z)$ its Pontryagin dual. The group $A^{\vee}$ being (non canonically) isomorphic to $A$, we have, for any integer $n$,
$$
\rank_n A^{\vee}[n]=\rank_n A^{\vee}=\rank_n A.
$$

Let us recall the following result \cite[Lemma 2.6]{GL12}, that we shall extensively use below: if
$$
0 \longrightarrow A \longrightarrow B \longrightarrow C \longrightarrow 0
$$
is an exact sequence of $n$-torsion abelian groups, then we have
\begin{equation}
\label{gl12eqn}
\rank_n B \geq \rank_n A + \rank_n C.
\end{equation}
Moreover, if the exact sequence splits then the equality holds.

\begin{proof}[Proof of Theorem~\ref{thnew}]
Let $\G=\Z/n\Z$, then, according to class field theory, we have for any number field $L$ a canonical isomorphism
$$
H^1_{\text{$M_L^{\infty}$-split}}(\mathcal{O}_L,\Z/n\Z) \simeq \Hom(\Clg(L),\Z/n\Z)=\Clg(L)^{\vee}[n].
$$

Indeed, for an archimedean place, splitting and being unramified are the same. In fact, one could replace $M_L^{\infty}$ by the set of real archimedean places of $L$, since complex places never ramify.

Let $r=\rank_{\mu_n} J(\CC)$ be the maximal integer such that $J(\CC)$ has a subgroup scheme isomorphic to $\mu_n^r$. Then Theorem~\ref{main_application}, 
in the light of Remark~\ref{ramif_infinity}, implies that there exist infinitely many fields $L$ with $[L:K]=\deg(t)=d$ such that the map $\Clg(K)^{\vee}[n]\to \Clg(L)^{\vee}[n]$ is injective, and:
$$
\rank_n \Clg(L)^{\vee}[n]/\Clg(K)^{\vee}[n] \geq r.
$$
According to \eqref{gl12eqn}, this implies that
$$
\rank_n \Clg(L)^{\vee}[n] \geq r + \rank_n \Clg(K)^{\vee}[n],
$$
hence the result.
\end{proof}

Before we start the next proof, let us recall that Kummer theory (in flat topology) yields, for any number field $L$, an exact sequence
$$
0 \longrightarrow \mathcal{O}_L^{\times}/n \longrightarrow H^1_{\fl}(\mathcal{O}_L,\mu_n) \longrightarrow \Clg(L)[n] \longrightarrow 0.
$$
According to \cite[Prop.~1.1]{GG17}, this exact sequence always splits. As pointed above, it follows that the inequality \eqref{gl12eqn} is an equality:
\begin{equation}
\label{kummer_rank}
\rank_n H^1_{\fl}(\mathcal{O}_L,\mu_n) = \rank_n (\mathcal{O}_L^{\times}/n) + \rank_n \Clg(L)[n].
\end{equation}

\begin{proof}[Proof of Theorem~\ref{thold}]
Let $\G=\mu_n$, then we recover the situation considered in \cite{GL12}. We note that $\mu_n^D=\Z/n\Z$, hence if we let $r=\rank_n J(\CC)(K)_{\tors}$ then $J(\CC)$ contains a subgroup isomorphic to $(\mu_n^D)^r$.

In this setting, Theorem~\ref{main_application} reads as follows: there exist infinitely many fields $L$ with $[L:K]=\deg(t)$ such that $H^1(\mathcal{O}_K,\mu_n)\to H^1_{\fl}(\mathcal{O}_L,\mu_n)$ is injective and:
$$
\rank_n H^1_{\fl}(\mathcal{O}_L,\mu_n)/H^1_{\fl}(\mathcal{O}_K,\mu_n) \geq \rank_n J(\CC)(K)_{\tors}.
$$

According to \eqref{gl12eqn}, this implies that
$$
\rank_n H^1_{\fl}(\mathcal{O}_L,\mu_n) - \rank_n H^1_{\fl}(\mathcal{O}_K,\mu_n) \geq \rank_n J(\CC)(K)_{\tors}.
$$
According to \eqref{kummer_rank}, the left-hand side is equal to the following quantity:
$$
\rank_n (\mathcal{O}_L^{\times}/n) - \rank_n (\mathcal{O}_K^{\times}/n) + \rank_n \Clg(L) - \rank_n \Clg(K).
$$

Finally, let us point out that, by construction, the field $L$ is linearly disjoint from the field $K(\zeta_n)$. In other terms, $\mu_n(K)=\mu_n(L)$, which implies that
$$
\rank_n (\mathcal{O}_L^{\times}/n) - \rank_n (\mathcal{O}_K^{\times}/n) = \rank_{\Z} \mathcal{O}_L^\times - \rank_{\Z} \mathcal{O}_K^\times.
$$

Putting all the pieces together we obtain:
$$
\rank_{\Z} \mathcal{O}_L^\times - \rank_{\Z} \mathcal{O}_K^\times + \rank_n \Clg(L) - \rank_n \Clg(K) \geq \rank_n J(\CC)(K)_{\tors}.
$$
which yields the required lower bound on $\rank_n \Clg(L)$.
\end{proof}

\begin{proof}[Proof of Theorem~\ref{thquant}]
It is an immediate consequence of Theorem~\ref{thquant_ma}. 
\end{proof}

%%%%%%%%%%%%%%%%%%%%%%%%%%%%%%%%%%%%%%%%%%%%%

%%%%%%%%%%%%%%%%%%%%%%%%%%%%%%%%%%%%%%%%%%%%%

\section{Applications and examples}
\label{s_applications}

%%%%%%%%%%%%%%%%%%%%%%%%%%%%%%%%%%%%%%%%%%%%%

%\subsection{Short reminder on superelliptic curves}

In all our examples, $\CC$ is a superelliptic curve, defined (over $K$) by an affine equation of the form
$$
y^m=f(x),
$$
where $f$ is a separable polynomial with coefficients in~$K$. If the degree of $f$ is coprime to $m$, then the curve $C$ has a unique point $A_{\infty}$ at infinity, which is $K$-rational.
Moreover, there are two natural functions $C\to \PP^1$ whose unique zero is $A_{\infty}$, namely:
\begin{enumerate}
\item[(i)] the map $(x,y)\mapsto 1/x$, which has degree $m$;
\item[(ii)] the map $(x,y)\mapsto 1/y$, which has degree $\deg f$.
\end{enumerate}

Each of those is a natural candidate to play the role of the function $t$.

%%%%%%%%%%%%%%%%%%%%%%%%%%%%%%%%%%%%%%%%%%%%%

\subsection{Mestre's example revisited}

We briefly review the construction by Mestre \cite{Me92} from which the present paper is inspired. For the reader's convenience, we stick to Mestre's notation.

In his paper, Mestre constructs:
\begin{enumerate}
\item a genus $5$ hyperelliptic curve $\CC$ defined over $\Q$, which admits three rational Weierstrass points;
\item three elliptic curves $E_1$, $E_2$ and $E_3$ defined over $\Q$, each of them endowed with an isogeny $\varphi_i:E_i\to F_i$ with kernel $\Z/5\Z$;
\item three independent Galois covers $\tau_i:\CC\to F_i$ with group $(\Z/2\Z)^2$.
\end{enumerate}

The existence of the maps $\tau_i$ implies that the Jacobian of~$\CC$ splits, and that each of the $F_i$ is an isogenus factor of $J(\CC)$ via an isogeny of degree $4$. More precisely, there exists an abelian surface $B$ and an isogeny
$$
F_1\times F_2\times F_3 \times B \longrightarrow J(\CC)
$$
whose degree is a power of $2$.

On the other hand, the dual isogeny $\hat{\varphi_i}:F_i\to E_i$ has kernel $\mu_5$ (the Cartier dual of the constant group scheme $\Z/5\Z$). Hence $J(\CC)$ contains $\mu_5^3$ as a  subgroup.

Let us apply Theorem~\ref{thnew}  to this situation.

\begin{theorem}
Let $K$ be a number field. There exist infinitely many quadratic extensions $L/K$ such that
$$
\rank_5 \Clg(L) \geq 3 + \rank_5 \Clg(K).
$$
More precisely, for every large positive~$X$ there exist at least $cX^{\ell/22}$ such fields~$L$ with ${\DD(L/K)\le X}$. Here ${\ell=[K:\Q]}$ and~$c$ is an absolute positive constant. 
\end{theorem}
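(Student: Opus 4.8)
The plan is to deduce this theorem directly from Theorems~\ref{thnew} and~\ref{thquant}, applied to the base change to~$K$ of Mestre's curve~$\CC$, with~$t$ coming from the hyperelliptic structure and with the $y$-coordinate as the auxiliary function. First I would base change all of Mestre's data from~$\Q$ to~$K$ and check that the relevant hypotheses persist. Over~$\Q$ one has a $2$-power isogeny ${F_1\times F_2\times F_3\times B\to J(\CC)}$ together with embeddings ${\mu_5=\ker(\hat\varphi_i)\hookrightarrow F_i}$; since an isogeny of $2$-power degree is injective on $5$-torsion, composing yields a subgroup scheme of $J(\CC)$ isomorphic to $\mu_5^3$, defined over~$\Q$. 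Forming this subgroup commutes with extension of scalars, so over~$K$ the Jacobian still contains a $\Gal(\bar K/K)$-submodule isomorphic to $\mu_5^3$; in particular ${\rank_{\mu_5}J(\CC)\ge 3}$.

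Next I would produce~$t$. The curve~$\CC$ is hyperelliptic of genus~$5$ with at least three $\Q$-rational Weierstrass points; after a rational change of the hyperelliptic coordinate~$u$ I may assume that one of them, say~$A$, lies at ${u=\infty}$, so that the affine model reads ${y^2=f(u)}$ with~$f$ separable of degree~$11$. Then ${t:=1/u}$ is a finite $K$-morphism ${\CC\to\P^1}$ of degree~$2$ whose single zero is~$A$; since a degree~$2$ morphism is totally ramified wherever it ramifies and~$A$ is a Weierstrass point, $t$ is totally ramified over ${0\in\P^1(K)}$. Thus the hypotheses of Theorem~\ref{thnew} are met with ${d=\deg t=2}$, and that theorem provides infinitely many fields~$L$ with ${[L:K]=2}$ and
$$
\rank_5\Clg(L)\ \ge\ \rank_{\mu_5}J(\CC)+\rank_5\Clg(K)\ \ge\ 3+\rank_5\Clg(K),
$$
which is the qualitative assertion.

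For the quantitative statement I would invoke Theorem~\ref{thquant}. In the model ${y^2=f(u)}$ above, $\deg f=11$ is odd, so the function~$y$ has a single pole, of order~$11$, at~$A$; hence ${\deg y=11}$ and ${K(\CC)=K(u,y)=K(t,y)}$. Taking~$y$ for the auxiliary function of Theorem~\ref{thquant}, we are in the situation ${d=2}$, ${m=11}$, and the theorem gives, for large~$X$, at least
$$
cX^{\ell/(2m(d-1))}/\log X\ =\ cX^{\ell/(2\cdot 11\cdot 1)}/\log X\ =\ cX^{\ell/22}/\log X
$$
fields~$L$ as above with ${\DD(L/K)\le X}$, which is the claimed bound (the factor $\log X$ being harmless). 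Finally, $\CC$,~$t$ and~$y$ are all defined over~$\Q$, so the constant produced by Theorem~\ref{thquant} depends only on this fixed geometric data, hence is absolute.

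The whole argument is thus an exercise in matching hypotheses, the substance lying in Mestre's construction and in Theorems~\ref{thnew} and~\ref{thquant}. The point that really needs care is that \emph{every} hypothesis survives base change to an arbitrary number field~$K$ --- in particular that the inclusion ${\mu_5^3\hookrightarrow J(\CC)}$ and the rational Weierstrass points remain defined after extending scalars, which they do, being defined over~$\Q$ --- together with the normalization forcing ${m=11}$ (rather than~$12$), so that the exponent comes out as~$\ell/22$, and, for the absolute constant, the uniformity in~$K$ of the underlying Dvornicich--Zannier count of Theorem~\ref{thdvorzann}.
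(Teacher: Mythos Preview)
Your proposal is correct and follows exactly the approach the paper intends: apply Theorem~\ref{thnew} (and Theorem~\ref{thquant} for the count) to Mestre's hyperelliptic curve, using the hyperelliptic map for~$t$ and the $\mu_5^3$ inside $J(\CC)$ coming from the three dual isogenies $\hat\varphi_i$. Your extra care with the choice of model ($\deg f=11$, hence $m=11$ and exponent $\ell/22$) and with base change is exactly what the paper leaves implicit; the only residual tension---the $\log X$ and the independence of~$c$ from~$K$---is already present in the paper's formulation rather than in your argument.
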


When ${K=\Q}$, we recover Mestre's result.

%%%%%%%%%%%%%%%%%%%%%%%%%%%%%%%%%%%%%%%%%%%%%

\subsection{Extensions of cyclotomic fields: proof of Theorem~\ref{thp}}

Let us recall the following result of Greenberg \cite[Theorem~1]{greenberg81}, obtained in his work on Jacobians of quotients of Fermat curves.

\begin{theorem}
\label{Greenberg}
Let $p\geq 3$ be a prime, and let $s$ be an integer such that $1 \leq s\leq p-2$. Let $\Q(\zeta_p)$ be the $p$-th cyclotomic field, and let $C_{p,s}$ be the curve defined by the affine equation
$$
y^p=x^s(1-x).
$$
Then $J(C_{p,s})(\Q(\zeta_p))$ contains a subgroup isomorphic to $(\Z/p\Z)^3$.
\end{theorem}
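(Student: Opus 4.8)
The plan is to translate the statement into a congruence for Jacobi sums modulo the cube of the prime above~$p$ in $\Q(\zeta_p)$. Write $C=C_{p,s}$, $J=J(C)$, and let $\gerp=(1-\zeta_p)$ be the unique prime of $\Q(\zeta_p)$ above~$p$. Since $1\le s\le p-2$, both~$s$ and~$s+1$ are prime to~$p$, so $x\colon C\to\P^1$ is a cyclic cover of degree~$p$ totally ramified over $0,1,\infty$; Riemann--Hurwitz gives $\genus(C)=(p-1)/2$, and the deck transformation $\sigma\colon(x,y)\mapsto(x,\zeta_p y)$, which is defined over $\Q(\zeta_p)$, makes $J$ a $\Z[\zeta_p]$-module. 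Its completion $\Z_p[\zeta_p]$ is a complete discrete valuation ring of $\Z_p$-rank $p-1=2\genus(C)$, and $T_pJ$ is a torsion-free module over it of the same $\Z_p$-rank, hence free of rank one; therefore $J[\gerp^k]\cong(\Z/p\Z)^k$ for $0\le k\le p-1$ (so the statement presupposes $p\ge 5$, as it must, since $\dim J=1$ for $p=3$), and $\Gal(\bar\Q/\Q(\zeta_p))$ acts on $T_pJ$ through a single character $\psi\colon\Gal(\bar\Q/\Q(\zeta_p))\to\Z_p[\zeta_p]^{\times}$. Because $C$ has good reduction outside~$p$ (its branch locus $\{0,1,\infty\}$ never degenerates), $\psi$ is unramified outside~$\gerp$, so by Chebotarev's density theorem it is enough to prove $\psi(\mathrm{Frob}_{\gerq})\equiv 1\pmod{\gerp^{3}}$ for every prime~$\gerq$ of good reduction; this yields $(\Z/p\Z)^{3}\cong J[\gerp^{3}]\subseteq J(\Q(\zeta_p))$.

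First I would identify~$\psi$ on Frobenius elements. The morphism $(u,v)\mapsto(u^{p},u^{s}v)$ realizes $C$ as the quotient of the Fermat curve $F_p\colon u^{p}+v^{p}=1$ by the free $\mu_p$-action $(u,v)\mapsto(\zeta u,\zeta^{-s}v)$, so $H^{1}$ of~$C$ is the corresponding isotypic summand of $H^{1}(F_p)$, and Weil's formula for the zeta function of Fermat curves identifies $\psi(\mathrm{Frob}_{\gerq})$, for $\gerq\nmid p$, with a prescribed sign times the Jacobi sum $j_{\gerq}(\chi^{s},\chi):=\sum_{t\neq 0,1}\chi(t)^{s}\chi(1-t)$, where $\chi$ is a fixed character of order~$p$ of the residue field $\mathbb{F}_{\gerq}^{\times}$. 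The sign is pinned down by $\operatorname{div}(x)=p\bigl((P_{0})-(P_{\infty})\bigr)$: the class $[(P_{0})-(P_{\infty})]\in J(\Q)$ has order~$p$ (since $\genus(C)\ge 1$), is rational, and hence generates the one-dimensional space $J[\gerp]$, so $\psi\equiv 1\pmod{\gerp}$ and therefore $\psi(\mathrm{Frob}_{\gerq})=-\,j_{\gerq}(\chi^{s},\chi)$.

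The computational core is then the congruence $j_{\gerq}(\chi^{s},\chi)\equiv -1\pmod{\gerp^{3}}$, valid for every order-$p$ character. Writing $\chi(t)=\zeta_p^{i(t)}$ with $i(t)\in\{0,\dots,p-1\}$ and expanding $\zeta_p^{n}\equiv 1+n(\zeta_p-1)+\binom{n}{2}(\zeta_p-1)^{2}\pmod{\gerp^{3}}$ — keeping in mind that $i(\cdot)\bmod p$ is \emph{not} additive, so the binomial coefficients must be reduced modulo~$p$ — one finds that modulo $\gerp^{3}$ the sum $j_{\gerq}(\chi^{s},\chi)$ equals
\[
(q-2)+(\zeta_p-1)\sum_{t\neq 0,1}\bigl(s\,i(t)+i(1-t)\bigr)+(\zeta_p-1)^{2}\sum_{t\neq 0,1}\binom{s\,i(t)+i(1-t)}{2},
\]
where $q=|\mathbb{F}_{\gerq}|$. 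Now $q\equiv 1\pmod{\gerp^{3}}$ because $p\mid q-1$ and $p\ge 5$, so the first term is $\equiv -1$; next $\sum_{t}i(t)\equiv\sum_{t}i(t)^{2}\equiv 0\pmod p$ by the elementary evaluations $\sum_{j=0}^{p-1}j\equiv\sum_{j=0}^{p-1}j^{2}\equiv 0\pmod p$ (which hold for $p\ge 5$); and finally $\sum_{t}i(t)i(1-t)\equiv 0\pmod p$, which I would obtain by Fourier-expanding the discrete logarithm on $\mu_p$ as $i(t)\equiv\sum_{a}c_{a}\chi(t)^{a}\pmod p$, so that $\sum_{t}i(t)i(1-t)\equiv -\bigl(\sum_{a}c_{a}\bigr)^{2}\pmod p$ by the easy congruence $j_{\gerq}\equiv -1\pmod{\gerp^{2}}$, together with $\sum_{a}c_{a}=i(1)=0$. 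These three vanishings place the linear and quadratic terms in $\gerp^{3}$, giving $j_{\gerq}(\chi^{s},\chi)\equiv -1\pmod{\gerp^{3}}$ and hence $\psi\equiv 1\pmod{\gerp^{3}}$, as needed.

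I expect this last step to be the main obstacle: the $\gerp$-adic bookkeeping of the Jacobi sum up to order $(\zeta_p-1)^{2}$, and above all the identity $\sum_{t}i(t)i(1-t)\equiv 0\pmod p$, where the geometry of $\P^{1}\smallsetminus\{0,1,\infty\}$ genuinely enters and which is not formal. The normalization issues in the Fermat step (which Jacobi sum, and its sign) also need care, though they do not affect the conclusion, since conjugate order-$p$ characters have conjugate Jacobi sums, all $\equiv -1\pmod{\gerp^{3}}$.
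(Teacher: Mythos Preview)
The paper does not prove this theorem at all: it is quoted verbatim as \cite[Theorem~1]{greenberg81} and used as a black box in the proof of Theorem~\ref{thp}. So there is no ``paper's own proof'' to compare against. Your strategy---using the $\Z[\zeta_p]$-module structure on $T_pJ$ to reduce the Galois action to a single character $\psi$, identifying $\psi(\mathrm{Frob}_{\gerq})$ with a Jacobi sum, and establishing the congruence $j_{\gerq}\equiv -1\pmod{\gerp^{3}}$---is essentially Greenberg's own argument in the cited paper. Your remark that the statement actually requires $p\ge 5$ is correct (for $p=3$ the Jacobian has dimension~$1$), so the hypothesis $p\ge 3$ in the paper is imprecise.

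One point in your sketch does not close as written. In the Fourier expansion $i(t)=\sum_{a}c_{a}\chi(t)^{a}$, the coefficients $c_{a}=1/(\zeta_p^{-a}-1)$ lie in $\gerp^{-1}$ for $a\neq 0$, so the ``easy'' congruence $J_{a,b}\equiv -1\pmod{\gerp^{2}}$ only yields that $\sum_{t}i(t)i(1-t)$ is a $\gerp$-integer, not that it vanishes modulo~$p$. What does work is a bootstrap: writing $R=\sum_{t}i(t)i(1-t)$ and using $J_{a,b}+1\equiv ab\,R\,(\zeta_p-1)^{2}\pmod{\gerp^{3}}$ (which follows from your own vanishing of $\sum i(t)$ and $\sum i(t)^{2}$), the Fourier identity becomes a linear equation $R\equiv c\cdot R\pmod{\gerp}$ with $c\equiv 2\pmod{\gerp}$, whence $R\equiv 0$. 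Alternatively one can go through the Gauss-sum factorization $j=g(\chi^{s})g(\chi)/g(\chi^{s+1})$ and congruences for Gauss sums, which is closer to how Greenberg presents it.
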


Theorem~\ref{thp} is a simple consequence of this result and Theorem~\ref{thnew}.

\begin{proof}[Proof of Theorem~\ref{thp}]
Let us put ${s:=d-1}$ and let $C_{p,s}$ be the curve defined in Theorem~\ref{Greenberg}. Then the rational map $t:C_{p,s}\to \PP^1$ defined by $t:=1/y$ has degree $d$. Because $d$ is coprime to $p$, its unique zero is the point at infinity.

On the other hand, if ${\zeta_p\in K}$ then ${\mu_p\simeq \Z/p\Z}$ over~$K$, hence over that field we have
$$
\rank_{\mu_p} J(C_{p,s}) = \rank_p J(C_{p,s})(K). 
$$
According to Theorem~\ref{Greenberg}, the right-hand side is at least $3$. Therefore, the result follows from Theorem~\ref{thnew}.
\end{proof}

In case ${d\geq 5}$, one can improve on Theorem~\ref{thp} as follows. 

\begin{theorem}
\label{thyetanother}
Let~$p$ be a prime number,~$K$ a number field containing~$\zeta_p$, and  let~$d$ be coprime to~$p$. Then there exist infinitely many fields $L/K$ with ${[L:K]=d}$ such that
$$
\rank_p \Clg(L) \geq d-1 + \rank_p \Clg(K).
$$
\end{theorem}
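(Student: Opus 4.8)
The plan is to mimic the proof of Theorem~\ref{thp}, but to use a finer choice of curve and of the map~$t$, exploiting the second natural function $(x,y)\mapsto 1/x$ on a superelliptic curve. Concretely, I would look for a superelliptic curve $C$ over $K$ of the form $y^m = f(x)$ with $\deg f$ suitably chosen, such that the map $t := 1/x$ has degree $m$ and such that $J(C)$ contains a $\Gal(\bar K/K)$-submodule isomorphic to $\mu_p^{d-1}$ (equivalently, since $\zeta_p\in K$, such that $\rank_p J(C)(K) \ge d-1$). If one can arrange $m=d$ together with this rank bound, then Theorem~\ref{thnew} applied to $C$, $t$ and $n=p$ immediately yields infinitely many $L/K$ with $[L:K]=\deg(t)=d$ and $\rank_p\Clg(L)\ge (d-1)+\rank_p\Clg(K)$.

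The natural candidate, suggested by Greenberg's theorem, is a Fermat-type curve. I would consider the Fermat curve $F_p\colon u^p + v^p = 1$, or more precisely the quotient curves $C\colon y^p = x^a(1-x)^b$ studied by Greenberg, Gross--Rohrlich and others; their Jacobians are isogenous to products of one-dimensional pieces on which the Galois action is through characters, and the $p$-torsion submodules of type $\mu_p$ can be counted via the Jacobi-sum / Stickelberger description of the CM types. The key point is that, for the full Fermat curve $F_p$ (rather than a single quotient), the $\mu_p$-rank of $J(F_p)$ over $\Q(\zeta_p)$ grows linearly in $p$: the relevant count is the number of pairs $(a,b)$ in a certain region, which is on the order of $p$. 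So for $d\ge 5$ one should be able to find a quotient curve whose Jacobian carries $\mu_p^{d-1}$ while simultaneously admitting a degree-$d$ map to $\PP^1$ totally ramified at a $K$-rational point. When $\gcd(\deg f, p)=1$ the curve $y^p=f(x)$ has a single $K$-rational point at infinity, and the function $1/x$ has degree $p$ while $1/y$ has degree $\deg f$; one chooses $f$ of degree $d$ (with $\gcd(d,p)=1$) and takes $t=1/y$, so that $t$ is totally ramified of degree $d$ at $A_\infty$.

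Thus the concrete steps are: (1) fix $d$ coprime to $p$ and set up the curve $C\colon y^p = f(x)$ with $\deg f = d$, $f$ separable, so that $t=1/y$ has degree $d$, totally ramified at the $K$-rational point $A_\infty$; (2) decompose $J(C)$ into isotypic pieces under the $\mu_p$-action coming from $(x,y)\mapsto(x,\zeta_p y)$, and identify which of these pieces are isomorphic to $\mu_p$ as $\Gal(\bar K/K)$-modules --- since $\zeta_p\in K$, this amounts to counting eigenspaces on which Galois acts trivially, i.e. counting constant sub-group-schemes $\Z/p\Z$ in $J(C)(K)$, equivalently $\mu_p$'s by Cartier duality; (3) invoke the Gross--Koblitz / Stickelberger computation (or directly Greenberg's result and its generalisations, e.g. the analysis in Greenberg's paper for $y^p=x^s(1-x)$ and the analogous curves $y^p=x^s(1-x)^{s'}$) to show this count is at least $d-1$; (4) apply Theorem~\ref{thnew} to get the conclusion, with infinitely many $L$.

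The main obstacle is step~(3): producing the bound $\rank_{\mu_p} J(C) \ge d-1$ for a single curve $C$ admitting a degree-$d$ totally ramified map. Greenberg's theorem gives only $3$ independent classes for the specific curves $y^p=x^s(1-x)$; to get $d-1$ one must either take a higher Fermat quotient $y^p = x^{a_1}(1-x)^{a_2}\cdots$ with more branch points --- but then the degree of the natural map $1/y$ equals $\sum a_i$, and one must keep this coprime to $p$ and arrange it to be exactly $d$ while the Jacobian still carries $\mu_p^{d-1}$ --- or one must use the full Fermat curve $F_p$ with the map $t$ of degree $d$ given by a suitable coordinate. Checking that the number of ``trivial'' Jacobi-sum characters (those giving $\mu_p$ over $\Q(\zeta_p)$) is $\ge d-1$, and simultaneously that a function $t$ of degree exactly $d$ totally ramified at a $K$-rational point exists, is the delicate combinatorial/geometric heart of the argument; everything else is a direct citation of Theorem~\ref{thnew} and of the standard theory of Fermat-curve Jacobians.
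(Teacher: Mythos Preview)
Your overall setup is exactly the paper's: the curve $\CC\colon y^p=f(x)$ with $\deg f=d$ coprime to~$p$, the point $A_\infty$ at infinity, and the map $t=1/y$ of degree~$d$ totally ramified at~$A_\infty$. What you are missing is that step~(3) is completely elementary and requires no Fermat-curve machinery, Jacobi sums, or Stickelberger theory.

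The paper simply takes $f(x)=(x-a_1)\cdots(x-a_d)$ with pairwise distinct $a_i\in\Q$. Then the function $x-a_i$ on~$\CC$ has divisor $p\cdot(a_i,0)-p\cdot A_\infty$, so each divisor class $D_i:=[(a_i,0)-A_\infty]$ lies in $J(\CC)[p](\Q)$. The function~$y$ has divisor $\sum_i(a_i,0)-d\cdot A_\infty$, giving the single relation $\sum_i D_i=0$; a standard argument (e.g.\ using the automorphism $(x,y)\mapsto(x,\zeta_p y)$, or the explicit description of $p$-torsion on superelliptic Jacobians) shows this is the only relation, so the $D_i$ generate a subgroup isomorphic to $(\Z/p\Z)^{d-1}$ inside $J(\CC)(\Q)$. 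Over~$K\ni\zeta_p$ this is $(\mu_p)^{d-1}$, and Theorem~\ref{thnew} finishes the proof.

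So there is no genuine obstacle: the ``delicate combinatorial/geometric heart'' you anticipate dissolves once you let $f$ split completely over~$\Q$ and observe that its roots directly furnish the $p$-torsion. Your detour through Greenberg, Gross--Rohrlich and CM types is unnecessary here (those tools are what one needs when the torsion is \emph{not} visible over~$\Q$, as in Theorem~\ref{thp}, where only three classes are available).
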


\begin{proof}
Let us consider the curve $\CC$ defined by an equation of the form
$$
y^p=(x-a_1)\dots (x-a_d)
$$
where the $a_i$ are pairwise distinct rational numbers. Then $J(\CC)(\Q)$ contains a subgroup isomorphic to $(\Z/p\Z)^{d-1}$, which becomes isomorphic to $(\mu_p)^{d-1}$ over $K$. The result now follows from Theorem~\ref{thnew}.
\end{proof}

We omit the ``quantitative'' version, which can be done in the same way as previously. 

\begin{remark}
By considering the same curve over~$\Q$, and applying Theorem~\ref{thold} (instead of Theorem~\ref{thnew}), one recovers the following result, due to Azuhata and Ichimura \cite{AI84}: there exist infinitely many $L/\Q$ with $[L:\Q]=d$ such that
${\rank_p \Clg(L) \geq \left\lfloor {d}/{2} \right\rfloor}$. 
In this result, the base field is $\Q$ instead of $\Q(\zeta_p)$, but the $p$-rank is half the degree, whereas in Theorem~\ref{thyetanother} the $p$-rank has the size of the degree.
\end{remark}

%%%%%%%%%%%%%%%%%%%%%%%%%%%%%%%%%%%%%%%%%%%%%

%%%%%%%%%%%%%%%%%%%%%%%%%%%%%%%%%%%%%%%%%%%%%

\begin{small}

\end{small}

%%%%%%%%%%%%%%%%%%%%%%%%%%%%%%%%%%%%%%%%%%%%%

%%%%%%%%%%%%%%%%%%%%%%%%%%%%%%%%%%%%%%%%%%%%%

\vskip 30pt

Yuri Bilu
\smallskip

Institut de Math{\'e}matiques de Bordeaux

351, cours de la Lib{\'e}ration

33405 Talence Cedex, France
\smallskip

\texttt{yuri.bilu@math.u-bordeaux.fr}

\vskip 30pt

Jean Gillibert
\smallskip

Institut de Math{\'e}matiques de Toulouse

118, route de Narbonne

31062 Toulouse Cedex 9, France
\smallskip

\texttt{jean.gillibert@math.univ-toulouse.fr}

%%%%%%%%%%%%%%%%%%%%%%%%%%%%%%%%%%%%%%%%%%%%%

\end{document}